\documentclass[10pt,a4paper,final,oneside,openright]{amsart} 

\usepackage{amsfonts}
\usepackage{amsmath}
\usepackage{amssymb}
\usepackage[applemac]{inputenc}
\usepackage{url}
\usepackage{hyperref}
\usepackage{color}

\usepackage{lipsum}
\usepackage[title]{appendix}
\usepackage{mathtools}
\usepackage{ulem}

\usepackage{graphicx}
\usepackage[T1]{fontenc}
\usepackage[english]{babel}
\usepackage[all]{xy}
\usepackage{enumitem}
\usepackage{amssymb,amsmath,amscd,amsfonts,amsthm,mathrsfs}
\usepackage{times}  
\usepackage{fancyhdr}
\usepackage{pinlabel}

\usepackage{hyperref}
\hypersetup{
	colorlinks=true,
	linkcolor=blue,
	filecolor=magenta,      
	urlcolor=cyan,
}

  \setcounter{MaxMatrixCols}{10}


\setcounter{tocdepth}{01}
\newtheorem{theorem}{{Theorem}}[section]
\newtheorem{proposition}[theorem]{{Proposition}}
\newtheorem{isom.ext}[theorem]{{Trivial isometric extension}}

\newtheorem{definition}[theorem]{{Definition}}
\newtheorem{lemma}[theorem]{{Lemma}}
\newtheorem{corollary}[theorem]{{Corollary}}
\newtheorem{fact}[theorem]{{Fact}}
\newtheorem{remark}[theorem]{{Remark}}

\newtheorem{observation}[theorem]{{Observation}}

\newtheorem{example}[theorem]{{Example}}

\definecolor{greenbf}{rgb}{0, 0.7 ,0.3}
 \hypersetup{
colorlinks=true, breaklinks=true, urlcolor= blue, linkcolor= red,
citecolor= {greenbf}, }

\def\R{\mathbb{R}}

\def\Z{\mathbb{Z}}

\def\N{\mathbb{N}}
\def\F{\mathcal{F}}

\def\K{\mathcal K}
\def\G{\mathcal G}

\def\H{\mathcal H}

\def\Iso{\sf{Iso}}

\def\Aff{\sf{Aff}}

\def\GL{{\sf{GL}}}
\def\O{{\sf{O}}}
\def\SO{{\sf{SO}}}

\def\SL{{\sf{SL}}}

\def\Iso{{\sf{Iso}} }

\def\Mink{{\sf{Mink}} }

\def\det{{\sf{det}}}

\def\g{{\mathfrak{g}}}

\def\h{{\mathfrak{h}}}
\def\i{{\mathfrak{i}}}
\def\j{{\mathfrak{j}}}
\def\z{{\mathfrak{z}}}
\def\z{{\mathfrak{z}}}

\def\dd{{\displaystyle{d}}}

\definecolor{purple}{rgb}{0.65,0.12,0.94}
\definecolor{marron}{rgb}{0.85,0.26,0.70}
 
\textheight 22,4cm

\begin{document}
	\title[Brinkmann Spacetimes]{On Completeness and dynamics of compact  Brinkmann Spacetimes
	}

	\author [L. Mehidi]{Lilia Mehidi}

	\address{Departamento de Geometria y Topologia\hfill\break\indent
Facultad de Ciencias, Universidad de Granada, Spain\\}
	\email{lilia.mehidi@ugr.es
    \hfill\break\indent
	\url{https://mehidi.pages.math.cnrs.fr/siteweb/}}

	\author[A. Zeghib]{Abdelghani Zeghib }
	\address{UMPA, CNRS, 
		\'Ecole Normale Sup\'erieure de Lyon\hfill\break\indent
		46, all\'ee d'Italie
		69364 LYON Cedex 07, FRANCE}
	\email{abdelghani.zeghib@ens-lyon.fr 
		\hfill\break\indent
		\url{http://www.umpa.ens-lyon.fr/~zeghib/}}
	\date{\today, MSC classes: 53C50, 53B30}
	\maketitle

	\begin{abstract}  Brinkmann Lorentzian manifolds are those admitting a lightlike parallel vector field.  We prove geodesic completeness of the compact and also compactly  Brinkmann-homogeneous spacetimes. We also prove, partially,  that their parallel 
		vector field generates an equicontinuous flow.

	\end{abstract}

	\tableofcontents

	\section{Introduction}
	

	
	There is a lack of completeness and a lack of compactness 
	of Lorentzian structures compared to Riemannian ones. Compact Riemannian manifolds
	are complete and have a compact isometry group, but compact Lorentzian  manifolds are 
	(generically) incomplete and some have a non-compact isometry group. 
	This is mainly  due to existence of 
	degenerate objects for Lorentzian metrics. A Brinkmann spacetime is a Lorentzian manifold admitting a lightlike parallel vector field $V$ (see Definition \ref{Definition: Brinkmann spacetime}). So Brinkmann Lorentzian manifolds appear 
	as a consecration of this phenomenon. Moreover, being parallel, $V$ is also a Killing vector field, and  generates therefore a one-parameter group of isometries.
    However, we will see here, strikingly, that Brinkmann manifolds exhibit a Riemannian-like behavior: they are complete, and their isometry group tends to be compact (more precisely, the flow of $V$ tends to be relatively compact in the isometry group.).\medskip
	
	Our work generalizes T. Leistner and D. Schliebner's    completeness result of a special class of Brinkmann spacetimes, those with abelian holonomy \cite{LS}, also called pp-waves. From the dynamical point of view, we are studying 
	here  the opposite situation to the one 
	recently  considered  by C. Frances  who got  a quite precise description of Lorentzian manifolds 
	having an isometry group of exponential growth \cite{Fra}.

	\subsection{Completeness}
	Generic Lorentzian metrics on compact manifolds  are thought of to be  incomplete.   The historical example is the Clifton-Pohl torus $\R^2 - \{(0, 0)\} /_{ (x, y) \sim 2 (x, y)}$ endowed with the metric $\frac{dx dy}{x^2 + y^2}$ (see \cite{CR, RS-C1, RS-C2, S, Li} for various results on completeness of Lorentzian  surfaces).
	
	\subsubsection{Completeness results} In the sequel, we will mention 
	known completeness results (essentially all, to our best knowledge).
	One observes here that all these results 
	assume  some symmetry or a high local symmetry hypothesis.  
	
	\subsubsection*{Homogeneous case}
	The ``oldest'' one is  perhaps Marsden's result  \cite{Mar} stating completeness of compact homogeneous pseudo-Riemannian manifolds. Let us observe here that, contrary to the Riemannian case, indefinite homogeneous pseudo-Riemannian (non-compact)  manifolds fail to be complete in general, unless in some classes, 
	for example symmetric spaces and left invariant metrics on $2$-step nilpotent groups, as this was proved by Guediri
	\cite{Gue}.  See also \cite{sanchez-zeghib-left}, a recent work studying Lie groups with all left invariant pseudo-Riemannian metrics complete.
	
	\begin{example}\label{Brinkman homogene incomplet}
		$U = \{(x,y) \in \R^2, y >0\}, g = 2dxdy.$
		The subgroup of $O(1,1)  \ltimes \R^2$ consisting of Lorentzian affine transformations of the form $(x,y) \mapsto (\alpha x + b, \alpha^{-1}y), \alpha >0, b \in \R$, acts transitively on $U$, which is then homogeneous. It is however incomplete since it is a proper subset of $(\R^2, 2dxdy)$. Observe that this has a  Brinkmann structure, with parallel null vector field $V = \partial_x$.   Its Lorentzian metric is homogeneous, but it is not homogeneous for the Brinkmann structure, i.e. the group of isometries does not preserve the parallel vector field. 	
	\end{example}
	
	\subsubsection*{Constant curvature case}
	The most striking result is   Carri\`ere's  Theorem  \cite{Car} on completeness of  compact flat Lorentzian manifolds, and its generalization to the constant (sectional) curvature case by Klingler \cite{Kli}. \\
	Among locally homogeneous spaces, those of constant curvature can be interpreted as having 
	a maximal local isometry group.   
	There are   however  examples of incomplete compact locally homogeneous Lorentzian manifolds, e.g. a quotient  
	$\SL(2, \R) / \Gamma$, where $\Gamma $ is a co-compact lattice, and $\SL(2, \R)$ is endowed with a generic left invariant Lorentzian metric \cite{BM}.

	\subsubsection*{Timelike Killing field} Other results in the compact case assume existence of a few symmetry, e.g. a Killing field, say $V$,  
	but with a given constant causal character. For example, the Euler vector field whose flow is the homothetic action on $\R^2$,  determines a Killing field on the Clifton-Pohl torus, but with a varying  causal character.  
	Romero and Sanchez \cite{RS} proved completeness when $V$ is (everywhere) timelike i.e. of negative square length, actually merely assuming $V$ being a conformal Killing field.

	 \subsubsection*{Lightlike Killing field.} Taking a product of an incomplete Lorentzian manifold with the circle, equipped with a Riemannian metric, shows that the existence of a spacelike Killing field does not imply completeness.  But what if the Killing field is everywhere lightlike? There is a non-compact  $3$-dimensional Lorentzian homogeneous space, called  Lorentz-SOL geometry in \cite{DZ}, which is incomplete although having a lightlike Killing field. Even in the compact case, the presence of a lightlike Killing field does not ensure completeness, as shown in   \cite[Example 6.5]{Malek}, where a $4$-dimensional incomplete locally homogeneous Lorentzian manifold with a lightlike Killing field is constructed. In fact, compact incomplete examples exist starting from dimension $3$; we refer to \cite[Paragraph 7.3]{Malek} for such examples.

	\subsubsection*{Parallel fields} Now, what about the completeness question under the existence of a parallel vector field?  Recall that a vector field 
	$ V$ is parallel if its covariant derivative vanishes, i.e. $\nabla_X V=  0$, for any vector field $X$, where $\nabla$ is the Levi-Civita connection of the Lorentzian metric $g$.
	Equivalently, $V$ is invariant under parallel transport: for a smooth curve $c: [0, 1]\to M$, $\tau_c (V(c(0)) =
	V(c(1))$, where $\tau_c$ denotes the parallel transport $T_{c(0)} M \to T_{c(1)} M$. 
	Again, the question makes sense 
	only  in the lightlike case. This is  exactly what our main result answers, since  by definition, 
	Brinkmann spacetimes  are those admitting  a lightlike parallel vector field.

\subsubsection{Case of Brinkmann spacetimes} Before we delve into the main result on completeness, let us introduce some terminology:
\begin{definition}\label{Definition: Brinkmann spacetime}
(1) A Brinkmann spacetime  is a Lorentzian manifold $(M,g)$ admitting a lightlike parallel vector field $V$. We denote the structure by $(M,g,V)$. \\
(2) Let $(M,g,V)$ be a Brinkmann spacetime. A Brinkmann isometry is an isometry of $(M,g)$ preserving $V$. We denote by $\Iso(M,g,V)$ the group of Brinkmann isometries.   
\end{definition}
\begin{definition}
(1) We call Brinkmann-homogeneous a Brinkmann spacetime which is homogeneous in the sense of the Brinkmann structure, i.e. the group of Brinkmann isometries acts transitively. \\
(2) A compactly Brinkmann-homogeneous spacetime is a Brinkmann spacetime such that there exists a compact subset whose iterates by the Brinkmann isometry group covers all the space. 
\end{definition}
Now, we can state our result:
\begin{theorem}[Theorem \ref{theorem-main: compact Brinkmann is complete}]\label{completeness}
A compactly  Brinkmann-homogeneous spacetime is complete. In particular, both compact Brinkmann spacetimes and    Brinkmann-homogeneous spacetimes are complete. 		
\end{theorem}

Observe that Example \ref{Brinkman homogene incomplet} is homogeneous, but not Brinkmann-homogeneous (nor compactly Brinkmann-homogeneous). So if we assume homogeneity for the metric only, this example shows that we can have incompleteness.
\medskip
	
\noindent Some comments are in order:
	
\subsubsection*{pp-waves} Let $(M, g, V)$ be a Brinkmann Lorentzian manifold. Since $V$ is parallel, its  orthogonal distribution $V^\perp$ is invariant by the Levi-Civita connection and hence is integrable and has totally geodesic (and degenerate) leaves.  We will always note the so defined foliation $\F$. Each leaf of $\F$ inherits an induced  connection. Then,   pp-waves  are defined by the fact that all the $\F$-leaves are flat. These spacetimes  are very important to General Relativity. As previously mentioned, completeness of compact pp-waves was proved by   Leistner and Schliebner in \cite{LS}. It is this result that motivates our present work.  \\
Observe  that Brinkmann class is more flexible than the pp-waves one. For instance, the product of a Brinkmann spacetime with a Riemannian spacetime is still Brinkmann. One needs taking the product with a flat Riemannian manifold in the pp-wave case.  We will also see in Par. \ref{Subsection: M*} a construction of a Brinkmann structure on $M^*$, the bundle of orthonormal frames of $V^\perp$. The so-constructed structure is never a pp-wave (say for $\dim M > 4$).

\subsubsection*{Ehlers-Kundt conjecture}  It is  somewhat unrelated to our subject since it concerns non-compact spaces, still it is a completeness question on Brinkmann spaces,  which aims to characterize complete Ricci-flat (global) pp-waves. A pp-wave metric can be written locally in the following special form on $\R^2 \times \R^n$: 
$g = 2 du (dv + H(u, z) du) + \mathsf{euc}_{\R^n}$, where $(u, v)  \in \R^2$, $z \in \R^n$ and 
$\mathsf{euc}_{\R^n}$ denotes the Euclidean metric $\Sigma (dz^i )^2$. The parallel vector field is $V = \frac{\partial}{\partial v}$ (this derives from the fact that $H$ does not depend on $v$). 
The Ehlers-Kundt conjecture states that in the global case, with 
$n=2$, i.e., when $H$ is defined globally on $\R \times \R^2$
(also referred to as being in `standard form'), if 
$g$ is Ricci-flat, then $g$ is complete if and only if $H$ is quadratic in $z$, in which case $g$ is called a plane wave (see \cite{FS} for the most recent results on the subject).
Observe that Ricci-flatness is equivalent to harmonicity of $H$ with respect to $z \in \R^2$. This fact together with  the reduction of the geodesic equation to a mechanical system, that will be discussed below, show the post-Newtonian character of pp-waves, which applies also   to Brinkmann spaces. The latter spaces also admit special but more complicated local charts for their metrics, see Par. \ref{Local_coordinates}. Returning to the Ehlers-Kundt conjecture, the compact case, i.e., when the pp-wave in standard form  $(\R^2 \times \R^n, g)$ admits a compact quotient which is also a pp-wave, has been proven in \cite[Corollary 1]{LS}. The general non-compact case remains open (see \cite{FS}).

	\subsubsection*{Weakly  Brinkmann} A weakening of the Brinkmann  condition is to assume existence of 
	a parallel direction field $E$, i.e. existence of a 1-dimensional sub-bundle $E \subset TM$ invariant under 
	parallel transport. In a similar manner, one can define a weak version  of pp-waves. An example given in \cite{LS} shows that the latter can be incomplete, even when compact.   It is also asked in this paper if the leaves of the  lightlike geodesic foliation 
	$\F$ (tangent to  $E^\perp$) are complete in the compact case.  In a recent paper \cite{Malek}, it is shown that when the manifold is compact, the $\F$-leaves are complete as soon as the ($1$-dimensional) leaves tangent to $E$ are complete. Moreover, an incomplete compact example is constructed.  \;  
	More abstractly, the Brinkmann class, which naturally generalizes  pp-waves, is in  turn part of the broader class of (locally) Kundt spacetimes, defined as those having a codimension one lightlike geodesic foliation $\F$ 
	(see for instance  \cite{BMZ}). It is shown in \cite{Malek} that the aforementioned result holds in the more general setting of (compact) locally Kundt spacetimes.

	\subsubsection*{An associated mechanical system} Other works that should be quoted here are 
	\cite{CFS} and \cite{CRS}.  They relate completeness of pp-waves to that of a 
	second order differential equation on a   Riemannian manifold $(M_0,g)$, defined by a general class of forces:  
	\begin{align}\label{Equation E_0}
		\nabla_{\dot{\gamma}(t)} \dot{\gamma}(t) = F_{(\gamma(t),t)} \dot{\gamma}(t) + X_{(\gamma(t),t)},
	\end{align}
	where $\nabla$ denotes the Levi-Civita connection of $g$, $F$ a $(1,1)$-smooth tensor field  and $X$ a smooth vector field on $M_0 \times \R$. 
	Observe that this is a linear perturbation of the geodesic equation of $(M_0, g)$. 
	Sufficient conditions on $F$ and $X$ for the trajectories to be complete are given in 
	\cite{CRS}, assuming $(M_0, g)$ complete, and also in \cite{Impulsive}, both in the smooth case and in low regularity cases (assuming the metric functions of distributional nature).
	
	In the general Brinkmann case, one meets, in adapted local coordinates,  equations of the form:
	\begin{align}\label{Equation E_1}
		\nabla_{\dot{\gamma}(t)}^t \dot{\gamma}(t) = F_{(\gamma(t),t)} \dot{\gamma}(t) + X_{(\gamma(t),t)}.
	\end{align} 
	Now, $\nabla^t$ is the Levi-Civita connection of a (locally defined) Riemannian metric $g_t$, varying with time. 
	So our situation is noticeably different from the one investigated in \cite{CRS}, first because there is no globally defined Riemannian metric on $M$, secondly because of the time dependency of the connection involved
	in Equation (\ref{Equation E_1}).

	\vspace{0.5cm}
	In addition to the previous comments, let us mention here that in dimension $3$, all Brinkmann manifolds are pp-waves. Their completeness in the  ``roughly homogeneous'' case, i.e. when the isometry group acts transitively, without preserving the parallel vector field, is investigated in the literature, for instance in \cite{Rec_curvature}, where they are referred to as Lorentzian ``Walker spaces''. In particular, there are maximal incomplete homogeneous examples, meaning that they admit no embedding into a larger space. These spaces are (incomplete) homogeneous plane waves. Let us mention that what some references call Walker spaces can be seen as generalizations of Brinkmann spacetimes in a general pseudo-Riemannian context. More recently, a $C^2$-maximality result has been established in any dimension for simply connected, non-flat homogeneous plane waves (all of which are incomplete in the roughly homogeneous case).

	\subsection{Reduction to an (almost) locally homogeneous situation}

	Beyond completeness, we want to understand how Brinkmann spacetimes are made up.
	In Section \ref{Cartan_Connection}, inspired by \cite{Fra}, we will prove the following result
	which will be next used to study the dynamics of $V$.
	
\begin{theorem}\label{reduction}
Let $(M, g, V)$ be a compact  Brinkmann spacetime. Then  
$M$ admits a  core $N$, a closed submanifold of $M$, invariant under a finite index subgroup of $\Iso(M, g, V)$.  

\noindent Description of the core $N$: there exists an open subset $\tilde{U}$ of $\tilde{M}$, invariant under all local isometries of $(\tilde{M}, \tilde{g}, \tilde{V})$, containing a lift $\tilde{N}$ in the universal cover $\tilde{M}$, and such that  
		
\begin{enumerate}
  \item  Either $N$ is a  locally homogeneous  Lorentzian Brinkmann closed submanifold of $M$; more 
  precisely,  the lift $\tilde{N}$ is the orbit of a finite index subgroup of $G = \Iso^0(\tilde{U}, \tilde{g}, \tilde{V})$, which has finite index in $\Iso(M, g, V)$. 
			
  \item Or $N$ is    a  locally co-homogeneity  one  Lorentzian Brinkmann  closed submanifold of $M$,  with boundary, which is a trivial fibration over an interval. The fibers are lightlike  totally geodesic and locally homogeneous. More precisely,  in the universal cover $\tilde{M}$,  the lift of the fibers of $\tilde{N}$ are the orbits of a finite index subgroup   of $G = \Iso^0(\tilde{U}, \tilde{g}, \tilde{V})$, which has finite index in $\Iso(M, g, V)$.
\end{enumerate} 
		
In each case, the core $N$ or its (codimension 1) fibers have the form $\Gamma \setminus G / I$, where $I$ is  a closed subgroup in $G$, and $\Gamma$ is a discrete subgroup of $G$ acting properly and freely on $G/I$. 
		
Actually, the core $N$ is not unique: in each of the previous cases, there exists an open subset of $M$ that is trivially fibered over an open subset of $\R^k$, for some $k \in \N$, with fibers that are submanifolds like $N$. 
\end{theorem}

\subsection{Dynamics}
Besides completeness issues, the other  interesting topic on Lorentzian manifolds is to understand when their isometry  group is non-compact (assuming the  manifold compact), see for instance \cite{Zim, Gro, AS1, AS2, Ze1, Ze2, Ze3, PZ, Fra1}. 
The   more recent article \cite{Fra} studies actions of discrete groups with exponential growth, a situation somehow opposite to ours here. Indeed, we want  to know here whether $V$ is ``essential '' or not. In other words, we ask the question if the flow of $V$ is equicontinuous (or not), or equivalently, if it preserves (or not) an auxiliary Riemannian metric.   If it does not preserve such a Riemannian metric, then it is really of  Lorentzian  nature.   Actually, one may  ask such a question for a parallel vector field, say $W$, not necessarily null as in the Brinkmann case: 
\begin{itemize}
\item[-] In the case where $W$ is timelike, it is obviously equicontinuous: just  apply a ``wick rotation'' to the Lorentzian metric $g$ to make it Riemannian; precisely, keep $g$ on $W^\perp$ and multiply it by $-1$ on $\R W$.
\item[-] There are interesting non-equicontinuous  examples in the case where $W$ is spacelike.  For instance, a hyperbolic matrix $A \in \SL(2, \Z)$ preserves a flat Lorentzian metric on $\mathbb T^2$. Its suspension is a parallel spacelike flow on a flat Lorentzian $3$-manifold $\mathbb T^2_A$. It is Anosov and so far away from being equicontinuous.
\item[-] For   lightlike parallel vector fields, we will prove, partially, that their flows  are equicontinuous. 
For this,  we will refer to the second situation in    Theorem \ref{reduction} as the degenerate case. Also, in order to state our next result, notice that the foliation $\F$ determined by $V^\perp$ is defined by a non-singular closed 1-form. Hence, it is either minimal, i.e. all its leaves are dense, or all its leaves are closed.
\end{itemize}

\begin{theorem}\label{dynamics}
Let $(M, g, V)$ be a compact  Brinkmann spacetime. Then, the flow of   $V$ is equicontinuous, that is its closure in $\Iso(M, g)$ is compact,  in each of the two following cases:

		-   the foliation determined by $V^\perp$ is not minimal;

		-    the degenerate case of the   reduction Theorem \ref{reduction}, i.e. when the core $N$ is not locally homogeneous, but rather has
		local co-homogeneity one. 		
\end{theorem}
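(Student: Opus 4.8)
The plan is to translate equicontinuity into a statement about parallel transport, and then to control the one remaining ``null-shear'' degree of freedom using the closed-leaf fibration in the first case and the reduction Theorem \ref{reduction} in the second.

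First I would record the basic mechanism. Since $V$ is parallel it is in particular Killing, so its flow $\phi^t$ lies in $\Iso(M,g)$ and fixes $V$, and by Theorem \ref{completeness} it is complete. Differentiating the flow and using the Killing identity $\tfrac{D}{dt}(d\phi^t X)=\nabla_{d\phi^t X}V$, the hypothesis $\nabla V=0$ yields
\[
\tfrac{D}{dt}\big(d\phi^t_V\,X\big)=0,
\]
that is, $d\phi^t_V$ \emph{is} parallel transport along the ($V$-geodesic) orbit. Fixing an auxiliary Riemannian metric $\sigma$ on the compact manifold $M$, equicontinuity means $\sup_{t,p}\|d\phi^t_p\|_\sigma<\infty$, equivalently that $\overline{\{\phi^t_V\}}\subset\Iso(M,g)$ is compact, equivalently that $\phi^t$ preserves some Riemannian metric. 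Now parallel transport fixes the null vector $V$, hence takes values in (a conjugate of) the stabilizer $\O(d-2)\ltimes\R^{d-2}$ of $V$ inside the orthogonal group of $g$, where $d=\dim M$, the compact factor $\O(d-2)$ acts on the screen $V^\perp/\R V$, and $\R^{d-2}$ is the unipotent null-shear. Since the screen carries the $g$-induced positive-definite metric, the $\O(d-2)$-part is automatically bounded; thus \emph{equicontinuity is equivalent to boundedness of the null-shear part of parallel transport along $V$-orbits}, and also to the existence of a $\phi^t$-invariant null line field $W$ transverse to $V^\perp$, from which an invariant Riemannian metric is built by normalizing $g(V,W)=1$ and declaring $V,W$ orthogonal to the screen.

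For the first case, non-minimality of $\F$ forces all leaves of the closed $1$-form $V^\flat$ to be closed, so that $M$ fibers over $S^1$ by the compact leaves $L_u$, with $V$ tangent to the fibers. Hence $\phi^t$ preserves each compact leaf $L$, and the null-shear becomes a real cocycle $c_t$ over the recurrent flow $\phi^t|_L$, measuring the screen-component acquired by a fixed transverse null field $W_0$, i.e. the screen part of $\nabla_V W_0=\mathcal L_V W_0$ (using $\nabla_{W_0}V=0$). By Krylov--Bogolyubov the flow on compact $L$ carries a finite invariant measure; averaging $\nabla_V W_0$ against it, the drift $\lim_t c_t/t$ is the integral of a Lie derivative and therefore vanishes, ruling out linear escape. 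I would then upgrade zero drift to genuine boundedness of $c_t$ via recurrence on the compact leaf — the shear cocycle cannot be transient while it preserves the finite leafwise measure together with Lebesgue measure on the $\R^{d-2}$-fiber — and conclude that an invariant transverse $W$, hence an invariant Riemannian metric, exists.

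For the second (degenerate) case I would feed in Theorem \ref{reduction}(2): an open set of $M$ is trivially fibered by copies of the co-homogeneity-one core $N$, which itself fibers over an interval with lightlike, totally geodesic, locally homogeneous fibers $\Gamma\backslash G/I$, compact as closed submanifolds of the compact $M$. The interval direction is a preserved, non-recurrent transverse parameter, so all recurrence of $V$-orbits takes place inside a single homogeneous fiber, where the flow of $V$ is an algebraic flow on $\Gamma\backslash G/I$ whose linearization is the isotropy cocycle. Compactness of $\Gamma\backslash G/I$ together with homogeneity then forces parallel transport along $V$-orbits to have relatively compact image, giving the desired bound. I expect the genuine difficulty to be exactly this control of the unipotent null-shear: in both cases the rotational part is free and the whole point is to exclude an unbounded shear, which is precisely where either the closed leaf (localizing orbits in case one) or the non-recurrent interval (in the degenerate case) is decisive — and why the remaining minimal, locally homogeneous situation is not covered, whence the \emph{partial} nature of the statement.
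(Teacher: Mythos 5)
Your opening reduction is sound: since $V$ is parallel, the identity $\frac{D}{dt}(d\phi^t X)=\nabla_{d\phi^t X}V=0$ does show that $d\phi^t$ is parallel transport along orbits, so that equicontinuity is equivalent to boundedness of the unipotent (null-shear) $\R^{n-1}$-part of that transport inside the stabilizer $\SO(n-1)\ltimes\R^{n-1}$ of the null direction. This matches the paper's framework. The gaps occur precisely at the step where the shear must actually be bounded, in both cases.

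In the first case you derive zero drift of the shear cocycle $c_t$ from Krylov--Bogolyubov and then assert that recurrence on the compact leaf ``upgrades'' zero drift to boundedness. That implication is false: a zero-mean cocycle over a measure-preserving (even uniquely ergodic) flow on a compact space need not be bounded --- Atkinson-type recurrence only gives returns of $c_t$ near $0$ along a subsequence, and already over an irrational rotation there are smooth zero-mean functions with unbounded Birkhoff sums (Liouville rotation numbers). Boundedness amounts to $c_t$ being a coboundary with a \emph{continuous} transfer function, and producing that transfer function is the entire difficulty; the vague appeal to preservation of ``the finite leafwise measure together with Lebesgue measure on the $\R^{d-2}$-fiber'' does not supply it. The paper avoids ergodic averaging altogether: it passes to the frame bundle $M^*$ to make the isotropy unipotent, applies Carri\`ere's theorem to the $1$-dimensional transversally Riemannian foliation $\mathcal V$ to see that the closures of its leaves are tori carrying a linear flow, and then constructs a syndetic hull $H(\Gamma^0,P^0)$ --- a connected abelian group containing $\Gamma^0$ as a lattice and acting transitively on those tori --- so that $V$ is smoothly conjugate to a linear toral flow, which is equicontinuous by inspection.

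In the degenerate case your decisive sentence --- that compactness of $\Gamma\backslash G/I$ together with homogeneity ``forces parallel transport along $V$-orbits to have relatively compact image'' --- assumes the conclusion. The paper explicitly warns that the form $\Gamma\backslash G/I$ ``hides arithmetic and dynamical formidable difficulties due to the discreteness of $\Gamma$ and the non-compactness of $I$''; the compact locally homogeneous quotient is exactly the situation the authors \emph{cannot} handle, which is why the theorem is only partial. What rescues the degenerate case is not homogeneity of the fibers but the extra structure developed in Sections 8--9: the abelian normal subgroup $J$ inside the nil-radical, the closure groups $P^0$ and $\Gamma^0$, the syndetic hull, and the Baire-category argument giving uniformity of $\Gamma^0_u$ in the transverse parameter $u$. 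None of this appears in your sketch, so neither case is closed.
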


\subsubsection{Dynamics in the locally homogeneous case} We think that this equicontinuity result extends to the general case of compact Brinkmann spacetimes.  Our results reduce the proof of equicontinuity to the locally homogeneous case, that is $M$ has the form $\Gamma \setminus G / I$ where:
\begin{itemize}
    \item $I$ is  a closed subgroup in $G$ (in fact contained in the nil-radical of $G$). 
    \item The $G$-action on $G/I$ preserves a Lorentzian metric $\tilde{g}$. 
    \item $\Gamma$ is a discrete sub-group of $G$ acting
	properly  freely and co-compactly  on $G/I$. 
    \item $Z$  is a central $1$-parameter subgroup of $G$ defining a parallel vector field $\tilde{V}$ on $G/I$.
\end{itemize}	
	
The question, which we guess has a positive answer,  is whether the $Z$-action on $M$ lies in a compact torus 	${\bf T}\subset \Iso(M, g)$?

	This would allow one to give a somewhat exact description of $M$. One starts with a compact manifold 
	$M$ with a toral action, and sees if a $1$-parameter group $Z \subset {\bf T}$ can be parallel
	for some Lorentzian metric. 
	
	It could appear paradoxical  to  be able to  handle  equicontinuity in the degenerate case, but not in the 
	locally homogeneous case! The reason is that this sympathetic algebraic form  $\Gamma \setminus G / I$ 
	hides arithmetic and dynamical formidable difficulties due to the discreteness of  $\Gamma$ and the non-compactness of $I$.   
	
	\subsubsection{Cahen-Wallach spaces}
	They are (indecomposable)  Brinkmann (globally) symmetric 
	spaces. More precisely,   they are global metrics  
	on $\R^2 \times \R^n$ given by a formula:
	$g = 2 du (dv + H(z) du) + \mathsf{euc}_{\R^n}$, where $(u, v)  \in \R^2$, $z \in \R^n$,  $\mathsf{euc}_{\R^n}$ is the Euclidean metric $\Sigma (dz^i )^2$, and $H$ is a quadratic form on $z$.
	Their discrete groups $\Gamma$  giving rise to compact quotients are investigated 
	by I. Kath and M. Olbrich in \cite{KL}. It is proved in their cases, but after a long algebraic 
	preparation, that their parallel vector field is in fact periodic, that is it corresponds to a 
	$\mathbb S^1$-action.

	\subsubsection{Flat case} Let us first note that if a Lorentzian manifold $(M, g)$  has  $d >1$,   linearly independent null parallel vector fields, then its universal cover splits as $\tilde{M}= \tilde{N} \times \R^d$, where $\tilde{N}$ is Riemannian, 
	$\R^d$ is flat Lorentzian, and $\pi_1(M)$ acts by translation along  the factor $\R^d$. One can then find a null parallel vector field 
	whose closure in $\Iso(M, g))$  is a torus of dimension $d$, or maybe more.
	
	Consider now the case where $M$ is flat with exactly one (up to scaling) null parallel vector field. Then, by  Carri\`ere completeness  Theorem,  $M $ is a quotient of the Minkowski space $\Mink^{1, n}$  by a discrete group $\Gamma$.
	Many results are known regarding $\Gamma$. In particular,  $\Gamma$  is solvable, in fact it has  a ``crystallographic hull'' $L$  containing it, 
	a solvable connected Lie  $L$, 
	acting simply transitively and isometrically on $\Mink^{1, n}$ \cite{FGH, GKa, GM}. One can deduce from this that if $M$ has exactly one null parallel field, then as in the Cahen-Wallach case, the so defined flow is periodic.

\subsubsection{A non-periodic example}
	
Consider on  $\tilde{M}=  (I \times \R) \times \R^n$, parameterized by $(u, v, z^1, \ldots, z^n)$, a metric of the form:  $g = 2 du dv  +   \alpha_{ij}(u)dz^i dz^j$ where $\alpha: u  \in I \to  ( \alpha_{ij})_{i,j}(u) $ is a curve of positive definite matrices,  and  $I$ is either an interval or the circle $\mathbb S^1$.  	These are pp-waves, and non-flat for a generic   $\alpha$. They are even plane waves (written in Rosen coordinates), see \cite[p. 6]{Rosen}. The abelian group $\R^{n+1} $ acts  isometrically, trivially on $u$ and  by translation on the coordinates $(v, z^1, \ldots z^n)$. This action is transitive on the $u$-levels.  Let $\Lambda$ be a lattice in $\R^{n+1}$ with respect to which the $v$-axis is irrational, that is the translation along $v$ determines a minimal linear flow on $\R^{n+1} / \Lambda$.  The quotient $\tilde{M} / \Lambda$  has the topology of $I \times \mathbb T^{n+1}$, on which the flow of $\frac{\partial}{\partial v}$ acts minimally on the $\mathbb T^{n+1}$-factor.  It will be noted 	$M(\alpha, \Lambda)$. In the case $I = \mathbb S^1$, one gets compact pp-waves with topology $\mathbb T^{n+2}$, for which   closures of the parallel flow are given by a $\mathbb T^{n+1}$-action. 
 
In fact, it turns out that any Brinkmann spacetime as in    the situation of Theorem \ref{dynamics}  is built up by pieces of the form $M(\alpha, \Lambda)$. 	(This may be extracted from our  last  Sections  \ref{Further results}, \ref{closure foliation} and 
\ref{end of proof}. Details will be published elsewhere).

\subsection{Organization of the article}
	
${}$
	
Section \ref{Subsection: generalities on Brinkmann spacetimes} contains some generalities about Brinkmann spacetimes. In Sections \ref{Subsection: M*} and \ref{Subsection: Existence of totally geodesic surfaces in a Brinkmann spacetime}, we give some properties on the local geometry of Brinkmann spacetimes; these properties are interesting on their own, but they will also be needed in the development of the rest of the article.  In particular, we introduce in Section \ref{Subsection: M*} some principal bundle over a Brinkmann spacetime $(M,V)$ with the property that the induced action of the subgroup of $\Iso(M)$ preserving $V$ has unipotent isotropy; this feature will serve in the study of the $V$-dynamics on $M$, carried out in Sections \ref{Further results} and \ref{closure foliation}. Among submanifolds of a pseudo-Riemannian manifold, totally geodesic ones are fundamental. In Section \ref{Subsection: Existence of totally geodesic surfaces in a Brinkmann spacetime}, we prove the existence of many such  submanifolds in a Brinkmann spacetime. 
 
We will then use  all this in Section \ref{Section: Completeness along the leaves} to give a synthetic proof of completeness through the lightlike geodesic foliation $F$ orthogonal to $V$. 

Section \ref{Section: Geodesic equation} gives the geodesic equations on a Brinkmann spacetime,  and Section \ref{Section: Analysis of geodesic equation} focuses on the proof of Theorem \ref{completeness} on the completeness of compactly  Brinkmann-homogeneous spacetimes, through an analysis of the geodesic equation. 

In Section \ref{Cartan_Connection}, we prove Theorem \ref{reduction} on the existence of a core $N$, and   Section \ref{Further results}  gives more details on the structure of this core. 

Sections  \ref{closure foliation}  and  \ref{end of proof}   contain the proof Theorem \ref{dynamics}. Section \ref{closure foliation} seems to be the most technical. One deals with a  homogeneous lightlike space $G/I$, and a discrete subgroup  $\Gamma \subset G$  acting properly co-compactly. 
A proven approach to such a problem is to find a kind of connected envelope $H$  containing $\Gamma$ and still acting properly. The difficulty in implementing this idea lies  on one hand  in the possible existence of a semi-simple compact factor of $G$, and on the other hand the fact that the   radical of $G$ is not nilpotent.  What helps us  here is that $V$ defines a $1$-dimensional  transversally Riemannian foliation, say  $\mathcal V$.  As this is the case of any transversally Riemannian foliation, the closure of the $\mathcal V$- leaves defines a  transversally Riemannian foliation (possibly singular) $\overline{\mathcal V}$.    By a result of Y. Carri\`ere \cite{Car1, Car2},  because $\dim \mathcal V = 1$,    $\overline{\mathcal V}$ has toral leaves. The algebraic richness of our situation allows us to find such a syndetic hull $H$.

\subsection*{Acknowledgement}
We thank the three anonymous referees for their thorough evaluation and valuable comments and suggestions, which helped improve the paper. Special thanks to one of the referees for suggesting the use of Observation \ref{observation: when Killing field is parallel} to prove Fact \ref{V* is parallel}, providing an alternative and easier proof.

\section{ Brinkmann Geometry}\label{Section: Brinkmann geometry}
\subsection{Preliminaries and local coordinates}\label{Subsection: generalities on Brinkmann spacetimes}
In this section, we derive some interesting properties on the local geometry of Brinkmann spacetimes.
	
Throughout this section, $(M,g,V)$ is a Brinkmann manifold of dimension $n+1$. We assume $V$ to be complete (which is only needed in Par. \ref{Subsection: Existence of totally geodesic surfaces in a Brinkmann spacetime}).  Denote by $\mathcal{V}$ the $1$-dimensional foliation defined by $V$, and by $\F$ the foliation of codimension $1$ defined by the parallel distribution $V^{\perp}$. 
	
\subsubsection{Transverse Riemannian structure} 
The leaves of $\F$ are lightlike submanifolds of $M$ foliated by the $1$-dimensional foliation $\mathcal{V}$. Since $V$ is parallel, $\F$ is a geodesic foliation. Therefore, the foliation $\mathcal{V}$  along any leaf $F$ of $\F$ admits a transverse Riemannian structure invariant by the local flow of any vector field tangent to $V$.  Locally, it is given by any $(n-1)$-submanifold $S$ contained in $F$ and transversal to $\mathcal{V}$, endowed with the Riemannian metric induced by $g$.

\subsubsection{Local coordinates} \label{Local_coordinates}
Brinkmann spacetimes admit two different adapted local coordinates, known in the literature as Brinkmann coordinates (see for instance the construction in \cite[Par. 4]{San}), and Rosen coordinates. In what follows we recall the construction of the Rosen coordinates. 

\begin{fact}\label{Fact: Rosen coordinates}
A Lorentzian spacetime $(M,g)$ is a Brinkmann spacetime of dimension $n+1$ if and only if there is a globally defined vector field $V$ on $M$, such that any point $p \in M$ admits a coordinate chart $(u,v,x^1,\dots,x^{n-1})$  in which the metric takes the form 
$$g=2 \dd u \dd v + g_{ij}(x,u) \dd x^i \dd x^j,$$
with $V= \partial_v$. 	These coordinates are refered to in \cite{Rosen} as Rosen coordinates.
\end{fact}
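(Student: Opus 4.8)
The converse implication I would dispatch immediately, as it is a pure Christoffel computation. In a chart where $g=2\,\dd u\,\dd v+g_{ij}(x,u)\,\dd x^i\,\dd x^j$, the components $g_{\sigma v}$ are all constant ($g_{uv}=1$, $g_{vv}=0$, $g_{iv}=0$) and every component is independent of $v$. Hence, in the formula $\Gamma^\lambda_{\mu v}=\tfrac12 g^{\lambda\sigma}(\partial_\mu g_{\sigma v}+\partial_v g_{\sigma\mu}-\partial_\sigma g_{\mu v})$ all three terms vanish, so $\nabla_{\partial_\mu}\partial_v=0$ for every $\mu$; together with $g(V,V)=g_{vv}=0$ this shows $V=\partial_v$ is a parallel null field, i.e. $(M,g)$ is Brinkmann. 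The substance of the statement is therefore the direct implication, for which the plan is to refine the Brinkmann chart of Fact \ref{Fact: Brinkmann coordonates}: there the transversal $Z$ was arbitrary (merely normalized by $g(Z,V)=1$), so it is neither null nor orthogonal to the spatial frame and the coefficients $H$ and $W_i$ do not vanish. I would replace $Z$ by a distinguished transverse congruence of null geodesics chosen orthogonal to the initial leaf.

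Concretely, I fix a leaf $F_0$ of $\F$ through $p$ and an $(n-1)$-submanifold $S_0\subset F_0$ through $p$ transverse to $\mathcal V$, with coordinates $(x^1,\dots,x^{n-1})$, $x(p)=0$. Along $S_0$ the tangent space $TS_0\subset V^\perp$ is spacelike, so $(TS_0)^\perp$ is a Lorentzian $2$-plane containing the null line $\R V$; I let $N$ be the unique null field spanning the second null direction of $(TS_0)^\perp$, normalized by $g(N,V)=1$ (so also $g(N,TS_0)=0$). I then set $\Phi(u,v,x)=\phi^v_V\bigl(\gamma_{q(x)}(u)\bigr)$, where $q(x)\in S_0$ has coordinates $x$, where $\gamma_{q(x)}$ is the null geodesic with $\gamma_{q(x)}(0)=q(x)$ and $\dot\gamma_{q(x)}(0)=N(q(x))$, and where $\phi^v_V$ is the flow of $V$. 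At $p$ the vectors $\partial_v=V$, $\partial_u=N$ and $\partial_{x^i}\in TS_0$ span $T_pM$, so $\Phi$ is a local diffeomorphism.

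It then remains to read off the metric. By construction $\partial_v=V$ gives $g_{vv}=0$, and since $V$ is parallel it is Killing, so $\mathcal L_V g=0$ forces $\partial_v g_{\mu\nu}=0$; it thus suffices to evaluate on the slice $v=0$, where $\Phi(u,0,x)=\gamma_{q(x)}(u)$. As $\gamma_{q(x)}$ is a null geodesic, $g(\partial_u,\partial_u)=g(\dot\gamma,\dot\gamma)$ is constant equal to $0$, so $g_{uu}=0$; and $\tfrac{d}{du}g(\dot\gamma,V)=g(\nabla_{\dot\gamma}\dot\gamma,V)+g(\dot\gamma,\nabla_{\dot\gamma}V)=0$ gives $g_{uv}=g(N,V)=1$. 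The delicate points, which I expect to be the crux of the proof, are the two orthogonality identities $g_{ui}=g(\partial_u,\partial_{x^i})=0$ and $g_{vi}=g(V,\partial_{x^i})=0$. For these I would use that the coordinate fields commute, $\nabla_{\partial_u}\partial_{x^i}=\nabla_{\partial_{x^i}}\partial_u$, together with $\nabla V=0$ and the constancy already established: differentiating along $\gamma$ yields $\tfrac{d}{du}g(\partial_u,\partial_{x^i})=\tfrac12\partial_{x^i}g(\partial_u,\partial_u)=0$ and $\tfrac{d}{du}g(V,\partial_{x^i})=g(V,\nabla_{\partial_{x^i}}\partial_u)=\partial_{x^i}g(V,\partial_u)=0$, so both are constant along the geodesics and hence equal to their initial values $g(N,TS_0)=0$ and $g(V,TS_0)=0$ (the latter because $TS_0\subset V^\perp$). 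This leaves only the spatial block $g_{ij}=g(\partial_{x^i},\partial_{x^j})=g_{ij}(x,u)$, giving $g=2\,\dd u\,\dd v+g_{ij}(x,u)\,\dd x^i\,\dd x^j$ as claimed; everything beyond the two preservation identities is routine bookkeeping once the transverse null geodesic congruence has been set up orthogonally to $S_0$.
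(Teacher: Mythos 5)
Your proposal is correct and follows essentially the same route as the paper: the same construction of the unique null field $N$ (the paper's $Z$) orthogonal to the initial spacelike slice with $g(N,V)=1$, the same null geodesic congruence extended by the flow of $V$, and the same verifications of the metric components. The only cosmetic difference is that you establish $g(\partial_u,\partial_{x^i})=0$ and $g(V,\partial_{x^i})=0$ by direct covariant-derivative computations along the geodesics, where the paper phrases the first as a Jacobi field argument and the second via invariance of the distribution $\ker g(V,\cdot)$ under the flow of $\partial_u$; these are equivalent.
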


\begin{proof}
Rosen coordinates are defined as follows. Consider $F_0$ a leaf of $\F$, and take any $(n-1)$-submanifold $\Omega_0$ in $F_0$ transversal to $V$. The induced metric on $\Omega_0$ is a Riemannian metric. Denote by $Z$ the null vector field along $\Omega_0$ which is orthogonal to $\Omega_0$ and transversal to $V$, such that $g(Z,V)=1$. Such a vector field is uniquely defined. Indeed, for every $x \in \Omega_0$, $T_x \Omega_0$ is spacelike of dimension $n-1$, hence $T_x \Omega_0^{\perp}$ is Lorentzian of dimension $2$. Thus $T_x \Omega_0^{\perp}$ contains a (unique) second lightlike direction, other than that of $V$. So $Z$ is uniquely defined when adding the assumption that $g(Z,V)=1$. Now, denote by $\Omega$ the hypersurface transversal to $V$ such that $\Omega \cap F_0 = \Omega_0$, obtained by taking for every $x \in \Omega_0$ the (null) geodesic with initial velocity $Z_x$. Choosing $\Omega_0$ smaller, if necessary, these geodesics define a local flow on $\Omega$; we denote again by $Z$ its infinitesimal generator. Denote by $\phi^v$ (resp. $\psi^u$) the local flow of $V$ (resp. $Z$). 
		
For $q \in \Omega$, define $u(q)$ to be the unique real such that $ \psi^{-u(q)}(q) \in \Omega_0$. Take some local coordinates $(x^1,\ldots,x^{n-1})$ in $\Omega_0$ and extend them in a neighborhood of $\Omega_0$ of $\Omega$ by setting for $q \in \Omega, x(q)=x(q_0)$, where $q_0 = \psi^{-u(q)}(q)$. We obtain local coordinates $(u,x^1,\ldots,x^{n-1})$ on $\Omega$.  Finally, define a local chart in a neighborhood of $p$ by extending the latter coordinates by the flow of $V$. This gives a diffeomorphism $\tilde{f}=(u,v,x^1,\ldots,x^{n-1})$ from a neighborhood $U$ of $p$ into an open subset of $\R^{n+1}$, by setting for $q \in U$, $f(q)=(u(q), v(q), x^1(q),\ldots,x^{n-1}(q))$, where $\phi_{-v(q)}(q) \in \Omega$, $\psi_{-u(q)}(q) \in  F_0$ and $x(q) = x(\phi_{-v(q)} \circ \psi_{-u(q)}(q))$.  
		
Again by taking $\Omega_0$ smaller, we can assume that the flows of $V$ and $Z$ are defined for $\vert v \vert, \vert u \vert < \epsilon$, for some $\epsilon >0$. If we take $\Omega_0$ to be a metric ball of radius $r$ (with respect to the induced Riemannian metric), this defines a differomorphism of a neighborhood of $p$ onto a set $B_{n-1}(0,r) \times I \times J$, where $B_{n-1}(0,r)$ is the open ball of center $0$ and radius $r$ in $\R^{n-1}$, and $I$ and $J$ are open intervals of $\R$. We claim that the metric in these coordinates has the given form. First, since $V$ acts isometrically, the orbits of $\partial_u$ are null geodesics, hence $g(\partial_v, \partial_u)$ is constant (Clairaut's constant) equal to $1$, and $g(\partial_u, \partial_u)=0$. Next, the local flow of $\partial_u$ leaves invariant the distribution tangent to $\mathcal{F}$. This is a consequence of a general fact: if $\alpha$ is a closed $1$-form such that $\alpha(Z)=1$, then the flow of $Z$ leaves invariant the distribution $\ker \alpha$. Here, take $\alpha := g(V,.)$ which is closed since $V$ is parallel. Consequently, $\partial_{x_i}, i=1,\ldots,n-1,$ are everywhere tangent to  $\mathcal{F}$, hence  $g(\partial_v, \partial_{x_i})=0$. We are left with the proof that $g(\partial_u, \partial_{x_i})=0$. We have $\partial_u \cdot g(\partial_u, \partial_{x_i})= g(\nabla_{\partial_u} \partial_u, \partial_{x_i})+g( \partial_u, \nabla_{\partial_u} \partial_{x_i})$. The first term vanishes since 
the orbits of $\partial_u$ are geodesics. And 
$2 g( \partial_u, \nabla_{\partial_u} \partial_{x_i})=2 g( \partial_u, \nabla_{\partial_{x_i}} \partial_{u})= \partial_{x_i} \cdot g(\partial_u, \partial_u)=0$.
		
Conversely, one can check by a computation of Christoffel symbols (or using Observation \ref{observation: when Killing field is parallel}) that in the local coordinates, $\partial_v$ is lightlike and parallel for a metric of the announced form.
\end{proof}

\subsection{The $\O(n-1)$-principal bundle $M^*$ over $M$.}\label{Subsection: M*}  

Let $(M,g,V)$ be a Brinkmann spacetime.  The goal of this section is to associate to $M$ a natural new Brinkmann spacetime $(M^*,g^*,V^*)$ with an isometric submersion $M^* \to M$, such that:  
\begin{itemize}
    \item[$\bullet$] The new foliation $\mathcal{V}^*$ is transversally parallelizable on each leaf of $\F^*$. This follows the standard construction in the theory of transversally Riemannian foliations. Here, we provide a variant of the construction. The parallelizability will be used in Section \ref{Section 8} (see Par. \ref{Par. 8.1}). 
    \item[$\bullet$] Any isometry of $M^*$ preserving $V^*$ and commuting with the submersion acts trivially on $(V^*)^\perp / \R V^*$. This will be used in an essential way in Section \ref{Further results}. 
\end{itemize} 
\bigskip

Let $(M,g,V)$ be a Brinkmann spacetime of dimension $n+1$. Define the vector bundle $$E := V^{\perp}/ \R V \to M,$$ which is equipped with a positive definite metric induced by $g$, $$g_E([X], [Y]) = g(X,Y).$$ A linear orthonormal frame $r_p$ of $E$ at a point $p \in M$ is an ordered orthonormal basis	$r_p= (e_1 + V,\dots,e_{n-1} + V)$ of the vector space $E_p$ for the Riemannian metric $g_E$. So $(e_1,\dots,e_{n-1})$ is a $g$-orthonormal family of vectors contained in $V^{\perp}$. 
 
Let $M^*$ be the set of all orthonormal frames of $E$ at all points of $M$, and denote by $$\pi: M^* \to M$$ the natural projection which maps $r_p$ to $p$.  This is a $\O(n-1)$-principal bundle. For each $r \in  M^*$, let $G_r$ be the subspace of $T_r M^*$ consisting of vectors tangent to the fiber through $r$. The Levi-Civita connection associated to $g$ gives a connection on $M^*$	i.e. a horizontal $\O(n-1)$-invariant distribution $\H$ such that for every $r \in M^*$, $T_r M^* = \H_r \oplus G_r$ (direct sum). Denote by $\omega$ the connection form: it is a $\mathfrak{o}(n-1)$-valued $1$-form on $M^*$, and by $\sigma: \mathfrak{o}(n-1) \to \chi(M^*)$ the Lie algebra homomorphism that maps $\mathfrak{a} \in \mathfrak{o}(n-1)$ to the fundamental vector field on $M^*$ associated to $\mathfrak{a}$ .  
\medskip

Observe that in dimension $3$, $M^*=M$.
\subsubsection{Preliminary facts}
Let $\phi^t$ be the $1$-parameter group generated by $V$. The flow of $V$ induces an action on $L(M)$, the bundle of orthonormal frames at points of $M$. Since the flow of $V$ acts isometrically and preserves $V$, this flow preserves $V^\perp$ and induces an action on the linear frames of $E$.  Therefore, for each $t$, $\phi^t$ induces a	transformation $\psi^t$ of $M^*$.	Thus we obtain a global $1$-parameter group of transformations $\psi^t$  of $M^*$, which induces a vector field on $M^*$ that we will denote by $V^*$.
	
\begin{fact}
If $V$ is parallel then $V^*$ is horizontal, i.e. it is the horizontal lift of $V$.
\end{fact}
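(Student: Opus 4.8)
The plan is to check the two properties that characterize the horizontal lift of $V$: that $V^*$ projects onto $V$, and that $V^*$ is horizontal (equivalently $\omega(V^*)=0$, i.e. $V^*\in\H$). The projection statement is immediate from the construction. Since the $1$-parameter group $\psi^t$ on $M^*$ is built so as to cover $\phi^t$, we have $\pi\circ\psi^t=\phi^t\circ\pi$; differentiating at $t=0$ gives $\pi_* V^*=V$. All the content therefore lies in horizontality.

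The approach to horizontality is to identify $\psi^t$ with parallel transport of frames of $E$ along the integral curves of $V$. Recall that the connection on the $O(n-1)$-bundle $M^*$ is induced by the metric connection on $E=V^\perp/V$ coming from the Levi-Civita connection $\nabla$; this descends precisely because the flag $\R V\subset V^\perp\subset TM$ consists of $\nabla$-parallel subbundles. By the definition of a principal connection, a curve of frames above an integral curve $c(t)=\phi^t(p)$ is horizontal exactly when it is the parallel transport of its initial frame, and its initial velocity is then the horizontal lift of $V$. So it suffices to show $\psi^t(r)$ is the parallel transport of $r$ along $c$. The key step, and the one place where parallelism of $V$ enters, is the following infinitesimal identity. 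Fix $p$ and a frame vector $w=e_i\in V^\perp_p$, and extend $w$ to a $\phi$-invariant vector field $W$ near $c$, so that $[V,W]=0$ and $W(c(t))=d\phi^t(w)$. Since $V$ is parallel, $\nabla_W V=0$, whence, using that $\nabla$ is torsion-free,
\[
\nabla_V W=\nabla_W V+[V,W]=0 .
\]
Thus $W$ is $\nabla$-parallel along $c$, that is $d\phi^t(w)=P_t(w)$, where $P_t\colon T_pM\to T_{c(t)}M$ denotes Levi-Civita parallel transport.

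Passing to the quotient then finishes the argument: because $\R V$ and $V^\perp$ are both $\nabla$-parallel and $\phi$-invariant, $P_t$ and $d\phi^t$ each preserve them and induce the same map $E_p\to E_{c(t)}$; hence the action of $\psi^t$ on orthonormal frames of $E$, namely $e_i+V\mapsto d\phi^t(e_i)+V=P_t(e_i)+V$, coincides with parallel transport of frames. Consequently each orbit curve $t\mapsto\psi^t(r)$ is horizontal, so $V^*=\frac{d}{dt}\big|_{t=0}\psi^t$ is horizontal and, being a lift of $V$, is the horizontal lift of $V$. The expected main obstacle is a mild one: the bookkeeping of this descent to $E=V^\perp/V$, where one must verify that parallel transport in $E$ and the map induced by $d\phi^t$ on $E$ genuinely agree; this rests entirely on the flag $\R V\subset V^\perp\subset TM$ being $\nabla$-parallel and $\phi$-invariant, while everything else follows from the single identity $\nabla_V W=\nabla_W V+[V,W]$ together with $\nabla V=0$.
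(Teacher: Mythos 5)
Your proof is correct and follows essentially the same route as the paper: extend the frame vectors by the flow of $V$ so they commute with $V$, deduce from $\nabla V=0$ and torsion-freeness that they are parallel along the orbit, and conclude that the flow-transported frame is the horizontal lift. You are merely more explicit than the paper about the identity $\nabla_V W=\nabla_W V+[V,W]$ and about the descent to the quotient bundle $E=V^\perp/V$, which is a welcome clarification but not a different argument.
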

\begin{proof}
Fix an orthonormal frame $r_p$ of $E_p$ at a point $p \in M$, and consider the orbit $\phi_p(t)$ of $V$ such that $\phi_p(0)=p$.  Write $r_p = (X_1(p) + V,\ldots,X_{n-1}(p)+V)$. For every $i=1,\ldots,n-1$, consider a curve tangent to $\mathcal{F}$ whose tangent vector at $p$ is $X_i(p)$, and denote again by $X_i$ its tangent vector field.	Consider an extension of $X_i$ in a neighborhood of the curve, which is tangent to $\F$ and invariant under the flow of $V$. We denote this extension by $\widehat{X_i}$. Since $V$ is parallel, the restriction of $\widehat{X_i}$ to the integral curve $\phi_p(t)$ of $V$ through $p$ is the parallel transport of $X_i(p)$ along $\phi_p(t)$. It follows that the curve $c(t)= (\phi_{p}^t)_*(r_p)$ in $M^*$ is exactly the horizontal lift of $\phi_p(t)$ such that $c(0)=r_p$, and this proves the fact.  
\end{proof}

\begin{fact}\label{Fact: induced f* on M*}
Let $f \in \Iso(M,g,V)$. Then $f$ induces a bundle automorphism $f^*$ of $M^*$ (which commutes with the action of $\O(n-1$)), called the lift of $f$ to $M^*$. Moreover, $f^*$ preserves the fundamental vector fields on $M^*$, preserves $V^*$, and preserves the horizontal distribution $\H$.    
\end{fact}
\begin{proof}
Since $f$ preserves $V$, clearly $f$ induces an action on $M^*$.  This (left) action commutes with the (right) action of $\O(n-1)$ on $M^*$.
In particular, $df^*$ preserves the fundamental vector fields on $M^*$.
Indeed,  $f^*$ commutes with $R_A$ for every $A \in \O(n-1)$. So for every $s \in \R$ and every $\mathfrak{a} \in \mathfrak{o}(n-1), f^* \circ R_{\exp(s \mathfrak{a})} = R_{\exp (s \mathfrak{a})} \circ f^*$. Taking the derivative with respect to $s$ at $s=0$ yields $df^*(\sigma(\mathfrak{a})(r))= \sigma(\mathfrak{a})(f^*(r))$, for every $\mathfrak{a} \in \mathfrak{o}(n-1)$. Hence the claim.
Next, the fact that $f^*$ preserves $V^*$ follows directly from the fact that $f$ preserves $V$. 
Now, let $X^* \in T_{r_0} M^*$ be a horizontal tangent vector at $r_0 \in M^*$. And let $r(s)$ be a horizontal curve in  $M^*$ tangent to $X^*$ at $r_0$. It is a horizontal lift of the curve $\gamma(s) = \pi(r(s))$ on $M$. Consider the curve $f^*(r(s))$. Since $r(s)$ is a parallel frame field along $\gamma(s)$, and $f$ is isometric,  the curve $f^*(r(s))$ is a parallel frame field along $f(\gamma(s))$.  It follows that $f^*(r(s))$ is a horizontal lift of $ f(\gamma(s))$.  Hence $df^*(X) = \frac{d}{ds} f^*(r(s))$ is a horizontal vector. 
\end{proof}

\subsubsection{A Brinkmann spacetime structure on $M^*$}
In this paragraph, we will prove the following proposition:
\begin{proposition}
There is a natural $\O(n-1)$-invariant Lorentzian metric $g^*$ on $M^*$ for which $(M^*,  g^*, V^*)$ is a Brinkmann spacetime.
\end{proposition}
We can define a $\O(n-1)$-invariant Lorentzian metric on $M^*$ in the following way: let $h_0$ be the positive definite inner product on $\mathfrak{o}(n-1)$ given by the Killing form. And set for $X^*, Y^* \in T_r M^*$:
$$g^*_r(X^*,Y^*) := g_{\pi(r)}( d_r \pi(X^*), d_r \pi(Y^*)) + h_0(\omega(X^*), \omega(Y^*)).$$
So 
	
$\bullet$ for every $r \in M^*$, $d_r \pi: \H_r \to T_r M$ is a linear isometry. 
	
$\bullet$ the vertical and the horizontal subspaces of $T_r M^*$ are orthogonal for every $r \in M^*$.

\begin{fact}\label{V* is a Killing vector field}
$V^*$ is a null Killing vector field for $(M^*,g^*)$ that preserves the distribution $\H$. 
\end{fact}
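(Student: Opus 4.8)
The plan is to verify the three asserted properties directly, exploiting that $\psi^t$ is the natural lift to $M^*$ of the flow $\phi^t$ of $V$. The two facts I would use repeatedly are that $\psi^t$ is $\pi$-related to $\phi^t$, so that $d\pi \circ d\psi^t = d\phi^t \circ d\pi$, and that $d_r\pi(V^*_r) = V_{\pi(r)}$, which is the content of the preceding Fact that $V^*$ is horizontal.

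Nullity I would get immediately: since $V^*$ is horizontal, $\omega(V^*) = 0$, and the defining formula for $g^*$ gives $g^*_r(V^*, V^*) = g_{\pi(r)}(V, V) + h_0(0, 0) = 0$, because $V$ is $g$-null.

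The heart of the argument, and the step I expect to be the main obstacle, is the invariance of $\H$; for this I would show that $\psi^t$ preserves the principal connection. First, $V$ being parallel is in particular Killing---its covariant derivative, and hence the symmetric part $\tfrac{1}{2} L_V g$, vanishes---so each $\phi^t$ is an isometry of $(M,g)$ and therefore preserves the Levi-Civita connection, together with $V$ and $V^\perp$; consequently it induces a connection-preserving automorphism of the bundle $E = V^\perp/V$. Second, $\psi^t$ is an $O(n-1)$-equivariant bundle automorphism, since it carries a frame to its parallel transport along the $V$-orbits, and parallel transport, being a linear isometry, commutes with the change-of-frame $O(n-1)$-action. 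The natural lift of a connection-preserving base automorphism preserves the induced horizontal distribution, so $d\psi^t(\H) = \H$; equivalently, the parallel-transport description shows $d\psi^t$ sends the horizontal lift of any $\F$-tangent field to a horizontal field. Being equivariant and preserving $\H$, $\psi^t$ then also preserves the connection form, $(\psi^t)^*\omega = \omega$, because $\omega$ is determined by $\H$ together with its normalization $\omega(\sigma(\mathfrak a)) = \mathfrak a$ on fundamental vector fields.

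The Killing property would then follow by checking $(\psi^t)^* g^* = g^*$ term by term in the definition of $g^*$: the first term is preserved because $d\pi \circ d\psi^t = d\phi^t \circ d\pi$ and $\phi^t$ is a $g$-isometry, and the second term is preserved because $(\psi^t)^*\omega = \omega$. This yields $L_{V^*} g^* = 0$. Once the connection-invariance in the middle step is established, both the null and Killing assertions reduce to these short computations.
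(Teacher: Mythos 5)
Your proposal is correct and follows essentially the same route as the paper: invariance of $\H$ via the observation that the isometries $\phi^t$ carry parallel frame fields to parallel frame fields (so horizontal lifts to horizontal lifts), preservation of the horizontal part of $g^*$ via $d\pi\circ d\psi^t=d\phi^t\circ d\pi$, and preservation of the vertical part via $O(n-1)$-equivariance of $\psi^t$, which fixes the fundamental vector fields and hence the $\omega$-components. The only cosmetic difference is that you spell out the (trivial) nullity check, which the paper leaves implicit.
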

\begin{proof}
Here, we only need that $V$ is a Killing vector field. Applying Fact \ref{Fact: induced f* on M*} to the flow $\phi^t$  of $V$ shows that the flow $\psi^t_*$ of $V^*$ preserves any $\omega$-constant vertical vector field on $M^*$.  Next, let $X^* \in \Gamma(TM^*)$ be a horizontal vector field on $M^*$. Write $\phi_* \circ d\pi = d\pi \circ \psi_*$. Again, it follows from Fact \ref{Fact: induced f* on M*} that  the distribution $\H$ is invariant by the flow of $V^*$.
So $\psi^t_*(X^*)$ is a horizontal vector field that projects to $\phi_*^t \circ d\pi(X^*)$. Hence, $g^*(\psi^t_*(X^*),\psi^t_*(X^*)) = g(\phi^t_* \circ d\pi(X^*), \phi^t_* \circ d\pi(X^*))$. On the other hand, $g^*(X^*,X^*) = g(d\pi(X^*), d\pi(X^*))= g(\phi^t_* \circ d\pi(X^*), \phi^t_* \circ d\pi(X^*))$, since the flow $\phi^t$ of $V$ is isometric. This yields $g^*(X^*,X^*) =g(\psi^t_*(X^*),\psi^t_*(X^*))$ for every horizontal vector field $X^*$.  This proves that $V^*$ is Killing, and ends the proof.    
\end{proof}
	
\begin{fact}\label{V* is parallel}
$V^*$ is parallel for the Levi-Civita connection induced by $g^*$.
\end{fact}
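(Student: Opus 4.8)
The plan is to exploit that, by the very construction of $g^*$, the projection $\pi\colon (M^*,g^*)\to (M,g)$ is a pseudo-Riemannian submersion: $d_r\pi\colon \H_r\to T_{\pi(r)}M$ is a linear isometry and the vertical space $G_r$ is $g^*$-orthogonal to $\H_r$. I will combine this with the two structural facts already established, namely that $V^*$ is the horizontal lift of the parallel field $V$, and that (Fact \ref{V* is a Killing vector field}) the flow $\psi^t$ of $V^*$ preserves $\H$ and commutes with the fundamental fields $\sigma(\mathfrak a)$. Since $\nabla^*_E V^*$ is tensorial in $E$ and $g^*$ is non-degenerate, it suffices to show $g^*(\nabla^*_E V^*,F)=0$ for $E,F$ ranging over a pointwise spanning set, and I take the basic horizontal fields (horizontal lifts $\widetilde{X'}$ of vector fields $X'$ on $M$) together with the fundamental vertical fields $\sigma(\mathfrak a)$, $\mathfrak a\in\mathfrak o(n-1)$.

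Everything then reduces to the Koszul formula, whose six terms I will evaluate using elementary inner products and brackets: $g^*(V^*,F)=0$ for $F$ vertical; $g^*(\sigma(\mathfrak a),\sigma(\mathfrak b))=h_0(\mathfrak a,\mathfrak b)$ is constant; $g^*(\widetilde{X'},\widetilde{Y'})=g(X',Y')\circ\pi$; $[V^*,\sigma(\mathfrak a)]=0$ (the flow of $V^*$ commutes with the $O(n-1)$-action, from Fact \ref{V* is a Killing vector field}); $[V^*,\widetilde{X'}]$ is horizontal (the flow of $V^*$ preserves $\H$); $[\sigma(\mathfrak a),\widetilde{X'}]=0$ (basic fields are $O(n-1)$-invariant); and $[\sigma(\mathfrak a),\sigma(\mathfrak b)]=\sigma([\mathfrak a,\mathfrak b])$ is again vertical.

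Case by case: when $E,F$ are both basic horizontal, only the horizontal part of $\nabla^*_E V^*$ pairs with $F$, and by the submersion property that horizontal part is the horizontal lift of $\nabla_{X'}V=0$, since $V$ is parallel on $M$; so the pairing vanishes. When $E=\widetilde{X'}$ is horizontal and $F=\sigma(\mathfrak a)$ vertical, every Koszul term vanishes: the norm-type terms are zero or constant along fibres, while the bracket terms vanish because $[E,V^*]$ is horizontal (hence $g^*$-orthogonal to the vertical $F$), because $[V^*,F]=0$, and because $[F,E]=-[\sigma(\mathfrak a),\widetilde{X'}]=0$. The mixed case $E$ vertical, $F$ horizontal then follows from the skew-symmetry of $X\mapsto\nabla^*_X V^*$ attached to the Killing field $V^*$. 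Finally, when $E=\sigma(\mathfrak a)$ and $F=\sigma(\mathfrak b)$ are both vertical, the norm terms are constant, the brackets $[E,V^*]=[V^*,F]=0$, and $[F,E]$ is vertical, hence $g^*$-orthogonal to the horizontal $V^*$; so again all terms are zero.

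The only delicate points, and where I expect the real content to sit, are the vertical--vertical and horizontal--vertical terms, where one must be sure that the bi-invariant fibre metric $h_0$ makes no contribution to $\nabla^*V^*$. This is precisely what the commutation $[V^*,\sigma(\mathfrak a)]=0$ and the preservation of $\H$ provide: they kill the O'Neill $A$-tensor term $A_{\widetilde{X'}}V^*=\tfrac12\,v[\widetilde{X'},V^*]$ and all Lie-derivative contributions of $V^*$ along the fibres. Since every Koszul term vanishes on a spanning set and $g^*$ is non-degenerate, $\nabla^*_E V^*=0$ for all $E$, that is, $V^*$ is parallel.
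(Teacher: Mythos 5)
Your argument is correct, but it takes a genuinely different route from the paper. The paper proves this fact by building an explicit Brinkmann chart on $M^*$: starting from Rosen coordinates $(u,v,x)$ on $M$ and the horizontal section $s$, it trivializes $\pi^{-1}(U)\cong U\times O(n-1)$, checks that all scalar products of the coordinate fields are $v$-independent, reads off that $F^*g^*$ has the normal form $2\,du\,dv+(\text{Riemannian part})+W_i\,du\,dx^i+Z_i\,du\,dy^i$, and concludes by the converse direction of the local-coordinate characterization of Brinkmann metrics (a Christoffel computation showing $\partial_v$ is parallel for any metric of that form). You instead argue intrinsically: $\pi$ is a pseudo-Riemannian submersion by construction, and the Koszul formula evaluated on basic horizontal fields and fundamental vertical fields kills every term, using exactly three inputs the paper has already established --- $\nabla V=0$ downstairs (for the horizontal--horizontal pairing, via the O'Neill identity $h(\nabla^*_{\widetilde{X'}}V^*)=\widetilde{\nabla_{X'}V}$), the $\psi^t$-invariance of $\H$ (so $[V^*,\widetilde{X'}]$ is horizontal; equivalently one could note that $V$ parallel forces $R(V,\cdot)=0$, hence the connection's curvature form vanishes on $\widetilde{V}$), and the commutation of $\psi^t$ with the $O(n-1)$-action together with the constancy of $g^*(\sigma(\mathfrak a),\sigma(\mathfrak b))=h_0(\mathfrak a,\mathfrak b)$ along fibres. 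Each case check is sound, and the reduction of the vertical--horizontal case to the horizontal--vertical one via skew-symmetry of $\nabla^*V^*$ legitimately uses that $V^*$ is Killing, which the paper proves beforehand. What your approach buys is coordinate-freeness and transparency about which structural hypotheses carry the weight; it would also adapt verbatim to other $\mathrm{Ad}$-invariant fibre metrics. What the paper's approach buys is that the Brinkmann chart on $M^*$ is constructed anyway and reused, so the conclusion comes essentially for free from the normal form.
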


To prove Fact \ref{V* is parallel}, we will use the following observation. 
\begin{observation}\label{observation: when Killing field is parallel}
A vector field $K$ is parallel if and only if it is Killing, and the $1$-form $\omega:=g(K,\cdot)$ is closed.  
\end{observation}

\begin{proof}
or any two vector fields $X$ and $Y$, we have $d\omega(X,Y)=X \cdot g(K, Y) - Y \cdot g(K, X) - g(K, [X, Y])$.  If $K$ 
 is a Killing vector field, this simplifies to $d\omega(X,Y)=2 g(\nabla_X K, Y)$, which implies that $K$ is parallel if and only if $\omega$ is closed.  
\end{proof}

\begin{proof}[Proof of Fact \ref{V* is parallel}]
Set $\omega :=g(V, \cdot)$ and $\omega^* := g^*(V^*,\cdot)$. 

Since the vector field $V^*$ is the horizontal lift of $V$, the pullback of $\omega$ by $\pi$ satisfies $\pi^*(\omega)=\omega^*$. As\, $\omega$ is a closed form, $\omega^*$ is also closed. Consequently, by Observation \ref{observation: when Killing field is parallel}, the vector field $V^*$ is parallel.
\end{proof}

\subsubsection{\textbf{Partial transversal parallelism}}
Define the new vector bundle $$E^*:= (V^*)^\perp/\R V^* \to M^*.$$ 
We will construct a transversal parallelism to $\mathcal{V}^*$, tangent to the foliation $\F^*$.  This is a parallelism on the vector bundle $E^*$. \\
Recall the decomposition $T_r M^* = \H_r \oplus G_r$ for every $r \in M^*$.  The bundle $E^*$ naturally inherits a decomposition into the sum of a horizontal and vertical sub-bundles. 
\medskip

\textbf{Notation:} Let $p: V^\perp \to E$ and $p^*: (V^*)^\perp \to E^*$ denote the canonical projections. Let $\overline{d\pi}: E^* \to E$ be the projection induced by $d \pi: TM^* \to TM$.
\medskip

Take $Y_1,\ldots,Y_{N} $ a basis of the Lie algebra $\mathfrak{o}(n-1)$, and consider $\sigma(Y_1),\ldots,\sigma(Y_{N})$ the associated vertical vector fields on $M^*$. They belong to $\Gamma(T \F^*)$ and induce a frame field $([\sigma(Y_1)],\ldots,[\sigma(Y_{N})])$ on the vertical sub-bundle of $E^*$. 
\medskip

Define a frame field $[X_1^*],\ldots,[X_{n-1}^*]$ on the horizontal sub-bundle of $E^*$ as follows. Let $r=(r_1,\ldots,r_{n-1}) \in M^*$. For each $i$, choose $X_i(r) \in V^\perp(\pi(r))$ such that $p(X_i(r))=r_i$. Let $X_i^*(r)$ be the unique horizontal vector at $r$ such that $d_{r} \pi(X_i^*(r)) = X_i(r)$. Observe that if two vectors in $V^\perp$  have the same $p$-projection to $E$, their horizontal lifts to $(V^*)^\perp$ have the same $p^*$-projection to $E^*$. So  $[X_i^*(r)]:= p^*(X_i^*(r)) \in E^*(r)$ is well defined (independent of the choice of $X_i(r)$).  It is the unique horizontal vector in $E^*(r)$ that projects to $r_i$ through $\overline{d\pi}$.
\medskip

The vector fields $([\sigma(Y_1)],\ldots,[\sigma(Y_{N})], [X_1^*],\ldots,[X_{n-1}^*])$ define a parallelism on $E^*$, such that $\overline{d\pi}([X_i^*](r))=r_i$.
\bigskip

Let $f \in \Iso(M,g,V)$. By Fact \ref{Fact: induced f* on M*}, the differential of $f^*$ preserves $V^*$, hence acts on $E^*$. We denote by $\overline{df^*}$ the induced map on $E^*$. Similarly, we use $\overline{df}$ for the map induced on $E$ by $df$. 
\begin{fact}\label{Fact: f* preserves parallelism}
The lift of a Brinkmann isometry of $(M,g,V)$ to $M^*$ induces a Brinkmann isometry of $(M^*,g^*,V^*)$ which preserves the parallelism on $E^*$. 
\end{fact}
\begin{proof}
Let $f \in \Iso(M,g,V)$. By Fact \ref{Fact: induced f* on M*}, $df^*$ preserves the vertical vector fields $\sigma(Y_i)$, which are $\omega$-constant. 
On the other hand, we have $\pi \circ f^*= f \circ \pi$. Moreover, $f^*$ preserves the horizontal distribution $\H$ by Fact \ref{Fact: induced f* on M*},  and $d\pi: \H \to TM$ is an isometry. Thus, the restriction of $df^*$ to $(\H, g^*_{\vert \H})$ is isometric.  Finally,  we obtain $f^* \in \Iso(M^*, g^*, V^*)$, by definition of $g^*$.  We will now prove that the induced map on $E^*$ preserves the parallelism on it. First, $\overline{df^*}$ preserves the vector fields $[\sigma(Y_i)]$. 
 Now, let $r:=(r_1,\ldots,r_{n-1}) \in M^*$. 
Set $Y_i^*:=\overline{df^*}([X_i^*])$, and $r':=f^*(r)$. 
We have $\overline{df}\,(\overline{d\pi}([X_i^*](r)) =\overline{df}(r_i)=r_i'$.   Writing $\pi \circ f^*= f \circ \pi$ yields $\overline{d \pi} \circ \overline{d f^*} ([X_i^*]) = \overline{d f} \circ \overline{d \pi}([X_i^*])$. Hence $\overline{d \pi} (Y_i^*(r')) = r_i'=\overline{d \pi}([X_i^*](r'))$. So $[X_i^*](r')$ and $Y_i^*(r')$ are horizontal vectors in $E^*(r')$ with the same $\overline{d \pi}$-projection to $E(p(r'))$, so they are equal. This yields $\overline{df^*}([X_i^*](r))=[X_i^*](f^*(r))$, which ends the proof. 
\end{proof}
\begin{corollary}\label{Corollary for unipotent isotropy}
Let $f \in \Iso(M,g,V)$. The differential of (the induced Brinkmann isometry) $f^* \in \Iso(M^*, g^*, V^*)$ at a fixed point $r \in M^*$ induces an element of the orthogonal group of $(T_r M^*, g^*_r)$ preserving $V^*(r)$ and acting trivially on $(V^*(r))^\perp/\R V^*(r)$. 
\end{corollary}
\begin{proof}
This is a straightforward consequence of Fact \ref{Fact: f* preserves parallelism}.  
\end{proof}

\subsection{Totally geodesic surfaces in a Brinkmann spacetime}\label{Subsection: Existence of totally geodesic surfaces in a Brinkmann spacetime}
	
Totally geodesic submanifolds appear only in special contexts. A generic pseudo Riemannian manifold does not admit any geodesic submanifold of dimension $>1$. In a Brinkmann spacetime, we will find many such  submanifolds. 
 
\subsubsection{Existence} Let $(M,g,V)$ be a Brinkmann spacetime. Consider the associated Grassmann $2$-plane bundle $Gr_2(M)$, obtained by replacing each fiber $T_xM$ by $Gr_2(T_xM)$,  the Grassmannian of the $2$-dimensional vector subspaces of 
$T_x M$. Denote by $\pi: Gr_2(M) \to M$ the natural projection. Define a sub-bundle $X$ of $Gr_2(M)$ in the following way: for each $x \in M$, the fiber at $x$ is
$$X_x := \{p_x \in Gr_2(M)_x, V \in p_x\}.$$ 
	
\begin{proposition}\label{Proposition: existence of totally geodesic surfaces in M}
For any $p_x \in X$, there is a totally geodesic flat surface through $x$ in $M$ whose tangent space at $x$ is $p_x$. 
\end{proposition}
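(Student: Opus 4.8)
The plan is to realise the surface as the orbit of a geodesic under the flow of $V$. Since $V$ is parallel it is nowhere vanishing and the Killing identity $g(\nabla_X V,Y)+g(X,\nabla_Y V)=0$ holds trivially, so the local flow $\phi^t$ of $V$ acts by isometries of $(M,g)$. Given $p_x\in X$, I would pick $W_0\in T_xM$ with $p_x=\mathrm{span}(V(x),W_0)$; since $\dim p_x=2$ this forces $W_0$ and $V(x)$ to be linearly independent. Let $\gamma(s)=\exp_x(sW_0)$ be the geodesic with $\gamma(0)=x$ and $\dot\gamma(0)=W_0$, and set
\[
 f(s,t):=\phi^t(\gamma(s)),\qquad \Sigma:=f(\text{a neighbourhood of }0).
\]
The coordinate fields satisfy $\partial_t f=V$ and $d f_{(0,0)}(\partial_s)=W_0$, $d f_{(0,0)}(\partial_t)=V(x)$; hence $f$ is an immersion near the origin and $T_x\Sigma=p_x$, as required.

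The core computation is to show that every ambient covariant derivative of the coordinate frame vanishes, which yields total geodesy and flatness at once. Indeed, $\partial_t f=V$ is parallel, so $\nabla_{\partial_s f}\partial_t f=\nabla_{\partial_s f}V=0$ and $\nabla_{\partial_t f}\partial_t f=\nabla_V V=0$; as the coordinate fields commute, $\nabla_{\partial_t f}\partial_s f=\nabla_{\partial_s f}\partial_t f=0$ as well. Finally, for each fixed $t$ the curve $s\mapsto\phi^t(\gamma(s))$ is the image of the geodesic $\gamma$ under the isometry $\phi^t$, hence a geodesic, so $\nabla_{\partial_s f}\partial_s f=0$ throughout $\Sigma$. (To avoid invoking the Killing property one may instead note that $Y:=\nabla_{\partial_s f}\partial_s f$ is parallel along the $t$-lines: by the commutation formula $\nabla_{\partial_t f}\nabla_{\partial_s f}-\nabla_{\partial_s f}\nabla_{\partial_t f}=R(\partial_t f,\partial_s f)$ and $\nabla_{\partial_t f}\partial_s f=0$, one gets $\nabla_{\partial_t f}Y=R(V,\partial_s f)\partial_s f$, which vanishes since $R(\cdot,\cdot)V=0$ for parallel $V$ together with the pair symmetry of $R$ force $R(V,W)W=0$; as $Y=0$ along $\gamma$, this gives $Y\equiv0$.) Because all $\nabla_{\partial_i f}\partial_j f$ vanish, the second fundamental form is identically zero, so $\Sigma$ is totally geodesic, and $\{\partial_s f,\partial_t f\}$ is a parallel frame for the connection induced on $\Sigma$; a surface carrying a parallel frame for its induced connection is flat, which is the remaining assertion.

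The only genuine subtlety — and the point I would be most careful about — is that the induced metric on $\Sigma$ need not be nondegenerate: if $W_0\notin V^{\perp}$ the plane $p_x$ is Lorentzian, whereas if $W_0\in V^{\perp}$ it is degenerate, since the null field $V$ is tangent to $\Sigma$. For this reason I would read \emph{totally geodesic} and \emph{flat} through the ambient connection (vanishing second fundamental form, and flatness of the tangential connection) rather than through an intrinsic Levi-Civita metric; the parallel-frame computation above delivers exactly this, uniformly in both cases. I would also emphasise that $\Sigma$ is a priori only a local surface, a germ at $x$, which is all the statement requires.
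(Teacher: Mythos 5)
Your proof is correct and follows essentially the same route as the paper: take the geodesic in the $W_0$ (resp. $Q$) direction, sweep it by the flow of the parallel field $V$, observe that the two commuting coordinate fields are parallel along every curve in the resulting surface, and conclude total geodesy and flatness from the existence of a parallel frame. Your extra remarks (the curvature-identity alternative to the Killing argument, and the caveat that the induced metric may be degenerate so flatness must be read through the induced connection) are sound refinements of the same argument rather than a different approach.
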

\begin{proof}
Let $x \in M$ and $p \in X$ such that $\pi(p)=x$.  
Write $p =$ Vect$(Q,V)$. Let $\gamma$ be the geodesic in $M$ tangent to $Q$, and denote again by $Q$ its tangent vector field. Take the image of $\gamma$ by the flow of $V$. This defines in a neighborhood of $x$ a surface $S_p$ tangent to $p$ at $x$. Extend the vector field $Q$ by the flow of $V':=V_{\vert S_p}$. This defines a vector field (again denoted by) $Q$ tangent to $S_p$ which commutes with $V'$. Since $V$ is parallel, we have $\nabla_{V'} Q = \nabla_{Q} V' =0$,  where $\nabla$ is the Levi-Civita connection of $g$ (to define these properly, we consider an extension of $Q$ in a neighborhood of $S_p$, then $\nabla_{V} Q$ and $\nabla_{Q} V$ both vanish along $S_p$, independently of the extension). Furthermore, $\nabla_{Q} Q =0$, since $V$ acts isometrically on $M$, sending geodesics to geodesics. It follows that for any vector fields $X$ and $Y$ tangent to $S_p$, $\nabla_X Y$ is also tangent to $S_p$, which proves that $S_p$ is totally geodesic in $M$. Observe that $S_p$ is exactly $\exp_x(p)$ in a neighborhood of $x$, since radial geodesics tangent to $p_x$ are contained in $S_p$. Finally, it is a well known fact that a surface admitting a parallel vector field is actually flat. 
\end{proof}
	
\subsubsection{Flat bands in $(M,g,V)$}
Remember here that $V$ is assumed to be complete.
\begin{definition}[Flat bands] Let $S$ be a simply connected surface.  We say that $S$ is
\begin{enumerate}
    \item a degenerate flat band if $S$ is isometric to $(\R \times I, \dd y^2)$, $(x,y) \in \R \times I$, where $I$ is an open interval of $\R$. 
    \item a Lorentzian flat band if $S$ is isometric to $(\R \times I, 2\dd x \dd y)$, $(x,y) \in \R \times I$, where $I$ is an open interval of $\R$.
\end{enumerate}		
In this case, the vector field $V:=\partial_x$ is a null parallel vector field on $S$.
\end{definition}

\paragraph{\textbf{Length of a flat band}} Let  $(S,V)$ be a flat band, with $V$ its parallel null vector field. The foliation associated to $V$ admits a transverse Riemannian metric as a foliation of $S$, invariant by the action of $V$ (in fact by any vector field tangent to $V$ in the case of a degenerate band). Fix an orientation on $S$, together with a leaf $l_0$ of $V$.  
\medskip

1) Suppose $S$ is a degenerate flat band. Here, a geodesic transversal to $V$ is spacelike. We define the forward (resp. backward) length of the band to be the length of (any) maximal geodesic $\gamma: [0,T[ \to S$ transversal to $V$ such that $\gamma(0) \in l_0$, and $(V, \dot{\gamma})$ is positively (resp. negatively) oriented. We denote them by $l^{+}(S)$ and $l^{-}(S)$ respectively. 
\medskip

2) If $S$ is a Lorentzian flat band, $\omega=\langle V,. \rangle$ is a closed $1$-form that defines the foliation of $V$. Integrating $\omega$ along curves transversal to $V$ defines a transverse Riemannian structure. Let $\gamma$ be a maximal geodesic transversal to $V$ defined on $[0,T[$, such that $\gamma(0) \in l_0$, and set  $l(S):= \vert \int_{0}^{T} \langle V,\dot{\gamma} \rangle \dd t \vert$. We define the forward (resp. backward) length of the band to be $l(S)$ if $(V, \dot{\gamma})$ is positively (resp. negatively) oriented. We denote them again by $l^{+}(S)$ and $l^{-}(S)$ respectively.
\medskip
	
\noindent We say that $S$ is an infinite flat band if both $l^{+}(S)$ and $l^{-}(S)$ are infinite.
\begin{fact}\label{Fact 2.12: S are flat bands}
Let $S$ be a totally geodesic surface in $M$ saturated by $V$. Then the universal cover of $S$ is a flat band. 
\end{fact}
To prove this, we need the following lemma.
\begin{lemma}
Let $S$ be a simply connected surface, equipped with a flat and torsion free connection $\nabla$.  Suppose that $S$ admits a complete parallel vector field $V$. Then there is a diffeomorphism $\phi: S \to \R \times I$, $(x,y) \in \R \times I$ ($I$ is an interval of $\R$), where $\R \times I$ is equipped with its affine structure, and $\phi_* V = \partial_x$. 
\end{lemma}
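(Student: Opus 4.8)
The plan is to use the affine structure determined by $\nabla$ together with the completeness of $V$ to realize $S$ as a trivial $\R$-bundle over an interval. Since $\nabla$ is flat and torsion-free and $S$ is simply connected, $S$ carries an affine structure whose developing map $D\colon S \to \R^2$ is a local affine diffeomorphism; as $V$ is $\nabla$-parallel, $dD(V)$ is a parallel, hence constant, vector field on $\R^2$, and after a linear change of coordinates I may assume $V$ is $D$-related to $\partial_x$, i.e. $dD_p(V_p) = (1,0)$ for every $p$. Let $\theta^t$ denote the (complete) flow of $V$; because $D$ is $D$-related to $\partial_x$ and $\theta^t$ is complete, $D$ intertwines the flows, $D \circ \theta^t = T_t \circ D$ with $T_t(x,y) = (x+t,y)$. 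In particular each $V$-orbit is a geodesic (as $\nabla_V V = 0$) that $D$ maps onto an entire horizontal line; since $D$ is an open map, its image $D(S)$ is an open, connected, horizontally saturated subset of $\R^2$, hence of the form $\R \times I$ for an open interval $I = y(S)$, where $y := \pi_2 \circ D$.

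The crucial step will be to prove that the $\R$-action $\theta$ is free and proper. Freeness is immediate, since $\theta^t(p) = p$ gives $D(p) + (t,0) = D(p)$, so $t = 0$. For properness I would take a sequence $(t_n, p_n)$ with $p_n \to p$ and $\theta^{t_n}(p_n) \to q$ in $S$, and apply $D$: then $(t_n,0) = D(\theta^{t_n}p_n) - D(p_n) \to D(q) - D(p)$, so $t_n$ converges; hence $\theta$ is proper. I expect this to be the main obstacle, since free flows are in general not proper, and it is exactly the combination of the developing map with the completeness of $V$ that forces properness here. Granting it, the orbit space $Q := S/\R$ is a Hausdorff smooth $1$-manifold and $\rho\colon S \to Q$ is a principal $\R$-bundle.

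It then remains to identify $Q$ and to assemble the diffeomorphism. The submersion $y$ is constant along orbits, so it descends to $\bar y\colon Q \to \R$, a submersion with image $I$; as $\dim Q = 1$ this is a local diffeomorphism onto $I$. Since $\R$ is contractible the bundle $\rho$ is trivial, so $S \cong \R \times Q$ and $Q$ is simply connected; the only simply connected connected $1$-manifold being $\R$, the monotone local diffeomorphism $\bar y$ is in fact a diffeomorphism $Q \xrightarrow{\sim} I$. Finally I would choose a section $s$ of $\rho$ and set $\Xi(t,q) = \theta^t(s(q))$, an $\R$-equivariant diffeomorphism $\R \times Q \to S$ with $\Xi_*\partial_t = V$; composing its inverse with $\mathrm{id} \times \bar y$ yields the required diffeomorphism $\phi\colon S \to \R \times I$ with $\phi_* V = \partial_x$, where $\R \times I$ carries its standard affine structure.
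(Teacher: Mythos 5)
Your argument is correct, but it reaches the conclusion by a genuinely different route than the paper. The paper proves that the developing map $D$ itself is a global diffeomorphism onto a band: it covers $S$ by injectivity domains for $D$ (saturations by the flow of $V$ of geodesics transverse to $V$), indexes them by a chain $(U_i)_{i\in\Z}$ over the leaf space, writes down the triangular affine transition maps $D_i\circ D_{i+1}^{-1}(x,y)=(x+\alpha_i,\lambda_i y+\beta_i)$, and identifies the glued object with $\R\times I$. You instead use $D$ only to extract two facts -- that $D(S)$ is a horizontally saturated open connected set, hence a band, and that the equivariance $D\circ\theta^t=T_t\circ D$ forces the flow of $V$ to be free and \emph{proper} -- and then invoke the quotient manifold theorem to present $S$ as a trivial principal $\R$-bundle over a $1$-manifold, identified with $I$ via $\pi_2\circ D$. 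You are right that properness is the crux; your proof of it is clean and replaces the paper's combinatorial gluing entirely. The one thing to watch is that the lemma is later used (in the proof that the surfaces $\widetilde S$ are flat bands) with the diffeomorphism being the developing map itself, so that it transports the connection and the induced metric; your final $\phi$, assembled from an arbitrary section $s$, is a priori only smooth, not affine. This is easily repaired from what you have already established: since $\pi_2\circ D=\bar y\circ\rho$ with $\bar y$ injective, $D$ separates distinct orbits, and $D$ is injective on each orbit, so $D$ is globally injective and is therefore itself the required affine diffeomorphism onto $\R\times I$ with $D_*V=\partial_x$. Stating this explicitly would make your version strictly match what the paper needs downstream.
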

\begin{proof}
The surface $S$ admits a $(\Aff(\R^2),\R^2)$-structure. We shall prove that the developing map $D: S \to \R^2$, which is a $(\Aff(\R^2),\R^2)$-local diffeomorphism, is actually a global diffeomorphism onto its image. 
		
$\bullet$ $D$ is a diffeomorphism on the orbits of $V$: since  $V$ parallel and complete, an orbit of $V$ is a complete geodesic.  So $D$ sends such an orbit on a geodesic of the affine space $\R^2$, i.e. on a line segment in $\R^2$. But we know that a local diffeomorphism on a manifold of dimension $1$ is actually a global diffeomorphism, so $D$ sends an orbit of $V$ diffeomorphically onto a line (a complete geodesic) in $\R^2$. In particular, $D$ sends the distribution defined by $V$ on a $1$-dimensional distribution of $\R^2$ of constant direction. So we can suppose that $D$ sends $V$ on the constant vector field $e_1=\partial_x$.
		
$\bullet$ For any maximal geodesic $\gamma$ transversal to $V$, the open subset $U$ of $S$ obtained by taking the image of $\gamma$ by the flow of $V$ is an injective domain for the developing map, hence isomorphic to the band $\R \times I_0 \subset \R^2$, where $I_0$ is an open interval of $\R$:  here again, $D$ sends $\gamma$ into a line segment in $\R^2$, hence sends $U$ bijectively to the saturation of that segment by the flow of $e_1$, i.e. to a band $\R \times I_0 \subset \R^2$. Another parametrization of $\gamma$ defines an isomorphism onto another band $\R^2 \times I_1$ of $\R^2$, affinely equivalent to $\R^2 \times I_0$. 
		
$\bullet$ If $U_1$ and $U_2$ are two open sets in $S$ as before, then $D(U_1 \cap U_2) = \R \times I_{1,2} \subset \R \times \R$, where   $I_{1,2}$ is an open interval of $\R$. 
		
The space of the leaves of $\mathcal{V}$ is a manifold of dimension $1$ diffeomorphic to $\R$. An atlas for this space is given by a countable set of geodesics $(\gamma_i)_{i \in \Z}$ transversal to $V$, such that $\gamma_i \cap \gamma_j \neq \emptyset$ if and only if $\vert i-j \vert =1$. Denote by $U_i$ the open set defined by taking the image of $\gamma_i$ by the flow of $V$. We get an atlas $(U_i, D_i:=D_{\vert U_i})$ on $S$ such that for any $i, j \in \Z$, $U_i \cap U_j \neq \emptyset$ if and only if $\vert i-j \vert =1$, and for all $ (x,y) \in D_{i+1}(U_i \cap U_{i+1})$,
$$ D_i \circ D_{i+1}^{-1}(x,y)=(x+\alpha_i, \lambda_i y + \beta_i), \lambda_i >0, \alpha_i, \beta_i \in \R.$$
So $S$ is diffeomorphic under $D$ to the quotient of $\coprod_{i \in \Z} \R \times I_i$ by $(x,y) \in C_{i+1} \sim f_i(x,y) \in C_i$, where $C_i := D_i(U_i \cap U_{i+1})$ and $f_i := D_i \circ D_{i+1}^{-1}$. Such a manifold is isomorphic to $\R \times I$, equipped with its affine structure, where $I$ is an open interval of $\R$ (that could be either $\R, ]-\infty , 0[, ]0, + \infty[$, or $]0, 1[$). 
\end{proof}
\begin{proof}[Proof of Fact \ref{Fact 2.12: S are flat bands}]
Denote by $\widetilde{S}$ the universal cover of $S$, and by $\widetilde{V}$ and $\widetilde{\mathcal{V}}$ the lifts of the corresponding objects to $\widetilde{S}$. 
It follows in particular from the previous lemma that a geodesic in $\widetilde{S}$ transversal to $\widetilde{V}$ cuts all the leaves of $\widetilde{\mathcal{V}}$. The surface $\widetilde{S}$ can be either Lorentzian or degenerate. Denote its metric by $\widetilde{g}$. In the first case, take $\gamma$ a null geodesic transversal to $\widetilde{V}$ such that $\widetilde{g}( \dot{\gamma},V ) = 1$. It cuts all the leaves of $\widetilde{\mathcal{V}}$. So the developing map in the previous lemma sends $\widetilde{S}$ diffeomorphically onto a band $\R \times I \subset \R^2$, such that $D_* \widetilde{V} = \partial_x$ and $(D^{-1})^* \widetilde{g} = 2 dxdy$. In the second case, take a geodesic $\gamma$ transversal to $\widetilde{V}$ such that $\widetilde{g}(\dot{\gamma}, \dot{\gamma}) = 1$. Again, $\gamma$ cuts all the leaves of $\widetilde{\mathcal{V}}$. Then $D$ sends $\widetilde{S}$ diffeomorphically onto a band $\R \times I \subset \R^2$, such that $D_* \widetilde{V} = \partial_x$ and $(D^{-1})^* \widetilde{g} = dy^2$. 
\end{proof}
	
\section{A synthetic proof of completeness of the  lightlike geodesic foliation}\label{Section: Completeness along the leaves}
	
In this section, we use the equivalence between the existence of $d$-dimensional totally geodesic submanifolds in $M$ and the integrability of some distribution on the Grassmann $d$-plane bundle $Gr_d(M)$ over $M$ (which extends the notion of a geodesic line field on the projective bundle) to define a $2$-dimensional foliation on some sub-bundle of $Gr_2(M)$. We will see that some properties of this foliation allow to study completeness properties of $M$.
	
The affine connection on $TM$ defines a connection on $Gr_2(M)$, so that the tangent bundle decomposes $T Gr_2(M) = H \oplus Vr $, where $Vr$ is the (canonical) vertical sub-bundle  and $H$ is the horizontal sub-bundle (determined by the connection).\\
	
\paragraph{T\textbf{he geodesic plane field on $Gr_2(M)$.}}   The geodesic plane field $\tau$ on $Gr_2(M)$ is defined for any $x \in M$, $p \in Gr_2(M)_x$ by the unique horizontal subspace $\tau_p \subset H_x$ (of dimension $2$) which projects on $p$ via $\dd_p \pi$.  This is equivalent to defining $\tau_p$ as  follows: consider a curve $x(t)$ in $M$ such that $x^{'}(0) \in p$, and parallel transport $p$ along $x(t)$. This defines a horizontal curve $\alpha(t)=(x(t), p(t))$ in $Gr_2(M)$, whose infinitesimal generator at $0$ gives a vector in $H_p$ that projects into $p$. The subspace $\tau_p$ is the one obtained by considering all such curves in $M$ tangent to $p$ at $0$. 
	
This defines a $2$-dimensional horizontal distribution $\tau$ on $Gr_2(M)$. A curve $\alpha(t)=(x(t),p(t)), t \in I$, is tangent to $\tau$ if:
\begin{itemize}
   \item $x^{'}(t) \in p$ for all $t \in I$,
   \item $p(t)$ is parallel along $x(t)$. 
\end{itemize}
	
\paragraph{\textbf{The leaves of $\tau$.}} A leaf $F$ of $\tau$ is a submanifold of dimension $2$ of $Gr_2(M)$ that projects into a plane field along a surface $S$ in $M$, which is parallel along any curve contained in $S$. This is equivalent to saying that there exists a surface $S$ in $M$, with $F=\{x \mapsto T_x S, x \in S\}$, and $S$ is totally geodesic. 
	
Let $p \in Gr_2(M)_x$. It results from the above that there exists a leaf of $\tau$ through $p$ if and only if $p$ is the tangent space at $x$ of a totally geodesic surface in $M$ through $x$, which in turn is equivalent to $\exp_x(p)$ being a totally geodesic submanifold of $M$ in a neighborhood of $x$. \\
	
\paragraph{\textbf{A domain of integrability of $\tau$.}} 
Let $(M,g,V)$ be a compact Brinkmann spacetime. 	
Consider the sub-bundle $X$ defined in Section \ref{Subsection: Existence of totally geodesic surfaces in a Brinkmann spacetime}, and let $p \in X$. Since $V$ is parallel, if there exists a leaf of $\tau$ through $p$, then it is entirely contained in $X$. By Proposition \ref{Proposition: existence of totally geodesic surfaces in M} and Fact \ref{Fact 2.12: S are flat bands}, we have the following fact.
\begin{fact}\label{Fact 3.1}
The distribution $\tau_{\vert X}$ is integrable, and a maximal leaf of $\tau$ through $p \in X$ projects under $\pi$ on a maximal totally geodesic flat surface $S_p$ in $M$ saturated by $\mathcal{V}$. Moreover, the universal cover of $S_p$ is a flat band.
\end{fact}
\noindent \textbf{Notation:} We denote by $\G$ the $2$-dimensional foliation on $X$ tangent to $\tau$.\\
	
\paragraph{\textbf{Completeness of the leaves of $\F$.}}
 \textit{Question}: do the leaves of $X$ project onto complete surfaces in $M$ ? We will see that the answer is yes when the projection is a surface contained in a leaf of $\F$. 
	
The sub-bundle $X$ is compact, as a bundle over a compact manifold with compact fiber  (the fiber of $X$ at $x$ is isomorphic to the projective space of  $T_x M / V$, hence to $\R P^{n-1}$).
Observe that for $p \in X$, the surface $S_p$ tangent to $p$ is contained in a leaf of $\mathcal{F}$ if and only if $p \subset V^{\perp}$. 
\medskip
	
Define $Y \subset X$ as follows:
$$ Y:=\{p \in X, p \subset V^{\perp}\}.$$ 
Then $Y$ is a closed submanifold of $X$ (hence compact), which is $\G$-invariant. A leaf of $Y$ projects into a surface in $M$ contained in a leaf of $\F$.
	
The flow of $V$ induces a flow on $Gr_2(M)$. Denote by $\widehat{V}$ the infinitesimal generator of this flow. Since $V$ is parallel, $\widehat{V}$ is also the unique horizontal vector field on $X$ which projects to $V$ under $\dd \pi$. 
	
\begin{fact}\label{Fact 3.2}
Let $\widehat{S}$ be a leaf of $\G$  and denote by $S$ its projection on $M$. If $\widehat{S}$ is contained in $Y$, then the universal cover of $S$ is an infinite flat band.
\end{fact}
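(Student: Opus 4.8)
The plan is to reduce the statement to the geodesic completeness of a single transverse geodesic, and then to control the only genuinely Lorentzian source of incompleteness, namely the null component of its velocity, by a Gronwall estimate that is made available by the parallelism of $V$.

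First I would record what kind of band $\widetilde{S}$ is. Since $S'\subset Y$, every plane $p$ in the leaf $S'$ satisfies $p\subset V^{\perp}$, so $TS\subset V^{\perp}$ and $S$ is contained in a leaf of $\F$; the induced metric on $S$ is therefore degenerate and, by Fact \ref{Fact: S are flat bands}, $\widetilde{S}$ is a \emph{degenerate} flat band, isometric to $(\R\times I,\dd y^{2})$ with the $\R$-factor equal to the flow of $V$ and the $I$-factor a maximal geodesic $\gamma$ transverse to $\mathcal{V}$, taken with unit $g$-speed. The $\R$-factor is already complete because $V$ is assumed complete, so by definition $\widetilde{S}$ is an infinite band if and only if this transverse geodesic $\gamma$ is defined on all of $\R$. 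Thus it suffices to prove that $\gamma$ is complete in both directions.

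Suppose then, for contradiction, that the maximal domain of $\gamma$ is $[0,T)$ with $T<\infty$. Because $V$ is parallel and $\gamma$ is a geodesic, $g(V,\dot\gamma)$ is constant and equal to its initial value $0$; hence $\dot\gamma(t)\in V^{\perp}$ for all $t$, its class $[\dot\gamma(t)]\in E$ is $g_E$-parallel of norm $1$, and the lift $\hat\gamma(t)=(\gamma(t),T_{\gamma(t)}S)$ stays inside the \emph{compact} bundle $Y$. Consequently the base point $\gamma(t)$ and the direction $[\dot\gamma(t)]$ remain in a compact set, and the \emph{only} way $\gamma$ can fail to extend past $T$ is that the component of $\dot\gamma$ along the null line $\R V$ blows up as $t\to T^{-}$. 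To measure this component, fix a global vector field $Z$ on $M$ with $g(Z,V)\equiv 1$ (such $Z$ exist, the condition being affine) and set $\zeta(t):=g(Z_{\gamma(t)},\dot\gamma(t))$. Writing $\dot\gamma=aV+Q_{0}$ with $Q_{0}$ a bounded lift of $[\dot\gamma]$ into $V^{\perp}$, one has $a=\zeta-g(Z,Q_{0})$, so $\dot\gamma$ is bounded, in any auxiliary Riemannian metric on the compact $M$, by a function of $\zeta$ alone, uniformly over $\hat\gamma(t)\in Y$.

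The hard part, and the heart of the matter, is to bound $\zeta$. Differentiating gives $\dot\zeta=g(\nabla_{\dot\gamma}Z,\dot\gamma)$, and here the parallelism enters decisively: since $g(Z,V)\equiv 1$ and $\nabla_V V=0$ we get $g(\nabla_V Z,V)=0$, which annihilates the term quadratic in $a$ in the expansion of $\dot\zeta$. What remains is at most linear in $a$, hence at most linear in $\zeta$ with coefficients bounded uniformly over the compact $Y$, so $|\dot\zeta|\le C(1+|\zeta|)$. A Gronwall argument then keeps $\zeta$, and therefore $\dot\gamma$, bounded on the finite interval $[0,T)$; thus $(\gamma,\dot\gamma)$ stays in a compact subset of $TM$, and the standard escape lemma for the geodesic equation forces $\gamma$ to extend beyond $T$, contradicting maximality. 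The same estimate applies backward, so $\gamma$ is complete and $\widetilde{S}$ is an infinite flat band. I expect this null-component estimate to be the main obstacle, precisely because it is the place where Lorentzian geodesics typically escape in finite time; that $V$ is parallel, and not merely Killing, is exactly what reduces the a priori quadratic growth of $\zeta$ to a controllable linear one.
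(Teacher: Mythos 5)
Your proof is correct, but it takes a genuinely different route from the paper's. You reduce ``$\widetilde S$ is an infinite band'' to the completeness of the maximal transverse geodesics of $S$ (a legitimate reduction for a degenerate band, since a unit-speed transverse geodesic has transverse length equal to its parameter time), and you then prove that completeness by an ODE argument: $(\gamma(t),[\dot\gamma(t)])$ stays in a compact set, and the only quantity that could escape, the null component $\zeta=g(Z,\dot\gamma)$, satisfies $|\dot\zeta|\le C(1+|\zeta|)$ because $g(\nabla_XZ,V)=Xg(Z,V)-g(Z,\nabla_XV)=0$ kills the quadratic term; Gronwall plus the escape lemma finish. The paper never touches the geodesic equation (hence ``synthetic proof''): it constructs a vector field $W$ on the compact invariant set $Y$, tangent to $\G$ and transverse to $\hat V$, whose projection to $M$ has $g$-norm bounded below by some $\alpha>0$; compactness of $Y$ makes $W$ complete, so its integral curves inside a leaf, lifted to $\widetilde S$, are defined for all time, cross every $\mathcal V$-leaf monotonically, and have length at least $\alpha^{1/2}T\to\infty$ --- which is precisely the statement that both transverse widths of the band are infinite. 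The paper's argument is shorter and purely qualitative; yours has the merit of delivering the subsequent Corollary (completeness of the leaves of $\F$) in the same stroke and of isolating exactly which term of the geodesic equation could blow up. Two small points: you should fix the lift $Q_0$ concretely (e.g.\ $Q_0=\dot\gamma-g(Z,\dot\gamma)V$, the component of $\dot\gamma$ in $V^\perp\cap Z^\perp$, which is smooth in $t$, $g$-unit, and ranges in a compact set, so that the coefficients in your estimate are genuinely uniformly bounded); and the cancellation of the quadratic term is not really where parallelism, as opposed to $V$ being merely a null Killing field, enters --- a null Killing field also satisfies $\nabla_VV=0$ --- the parallelism is rather what produces the totally geodesic surface $S$ and the compact invariant bundle $Y$ in the first place.
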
 
\begin{proof}
Let $\widehat{W}$ be a smooth vector field on $Y$ such that for any $p \in Y$, $\widehat{W}_p \in T_p \G$ and $\widehat{W}$ is transversal to $\widehat{V}$ (to  be more accurate, it is a direction field that we can always construct, and a vector field up to a double cover). By definition, $g(\dd_p \pi(\widehat{W}), V)=0$. Since $Y$ is compact, $\widehat{W}$ is complete. Furthermore, there is $\alpha >0$ such that $g(\dd_p \pi(\widehat{W}), \dd_p \pi(\widehat{W})) \geq \alpha >0$ for every $p \in Y$. Let $\widehat{S}$ be a leaf of $\G$. Since $\pi$ restricted to $\widehat{S}$ is a diffeomorphism onto its image, we can define $W_{S} := \dd \pi(\widehat{W}_{\vert \widehat{S}})$. Denote by $\widetilde{W_{S}}$ (resp. $\widetilde{V}$) the lift of $W_{S}$ (resp. $V$) to the universal cover $\widetilde{S}$. So $\widetilde{S}$ is a flat band foliated by $\widetilde{\mathcal{V}}$, and equipped with a vector field $\widetilde{W_{S}}$ transversal to $\widetilde{V}$. 
To see that it is an infinite band, let $c$ be an integral curve of $\widetilde{W_{S}}$ defined on $[0,T[$, it cuts all the leaves of $\widetilde{V}$. Set $l(c) = \int_{0}^{T} (\widetilde{g}_{c(t)}(\widetilde{W_{S}},\widetilde{W_{S}}))^{1/2} \dd t  $   the length of $c$.  We have $l(c) \geq \alpha^{1/2} \cdot T$. However, $\widehat{W}$ is complete, so $\widetilde{W_S}$ is complete, which yields $l(c) = \infty$. 
\end{proof}
\begin{corollary}
The leaves of $\F$ are complete.
\end{corollary}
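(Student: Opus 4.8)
The plan is to reduce completeness of a leaf $F$ of $\F$ to completeness of the geodesics of $M$ tangent to $F$, and then to read off that completeness from the infinite flat bands produced in the preceding Fact. Since $\F$ has totally geodesic leaves, a geodesic of $M$ whose initial velocity lies in $V^{\perp}$ stays inside $F$, and it is precisely a geodesic of the connection induced on $F$; conversely every geodesic of that induced connection arises this way. Thus $F$, with its induced connection, is complete if and only if every geodesic $\gamma$ of $M$ with $\dot{\gamma}(0)$ tangent to $F$ extends to all of $\R$.

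First I would dispose of the degenerate direction. If $\dot{\gamma}(0)$ is proportional to $V$, then $\gamma$ is, up to affine reparametrization, an orbit of $V$, hence complete by our standing hypothesis that $V$ is complete. If instead $\dot{\gamma}(0)$ and $V_{\gamma(0)}$ are linearly independent, this transversality persists: both $\dot{\gamma}$ and $V$ are parallel along $\gamma$, so the plane $p(t) = \mathrm{Vect}(\dot{\gamma}(t), V_{\gamma(t)})$ is the parallel transport of $p(0)$ and stays two-dimensional throughout the domain of $\gamma$. I would then pass to the Grassmann bundle: setting $p := \mathrm{Vect}(\dot{\gamma}(0), V_{\gamma(0)})$, the inclusion $\dot{\gamma}(0) \in V^{\perp}$ gives $p \subset V^{\perp}$, so $p \in Y$. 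Let $S'$ be the leaf of $\G$ through $p$; by the integrability statement $S'$ is contained in $Y$, and its projection is the maximal totally geodesic flat surface $S_p$ tangent to $p$. This surface lies inside $F$, since it is swept out by flowing the geodesic tangent to $\dot{\gamma}(0)$ along $V$, and both operations preserve $F$; and by construction $\gamma$ is contained in $S_p$.

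Finally I would invoke the previous Fact: the universal cover $\widetilde{S_p}$ is an infinite flat band. Because $S_p$ sits in the lightlike leaf $F$, its induced metric is degenerate with kernel $V$ (indeed $g(Q,V)=0$ and $g(V,V)=0$, while $g(Q,Q)>0$ for $Q = \dot{\gamma}(0) \notin \R V$), so $\widetilde{S_p}$ is a \emph{degenerate} band. Lifting $\gamma$, it becomes a geodesic transverse to $\widetilde{V}$ of constant positive speed $g(\dot{\gamma},\dot{\gamma})^{1/2}$; infiniteness of the band means exactly that the forward and backward lengths of such a maximal transverse geodesic are infinite, and since the speed is a nonzero constant, infinite length forces the affine parameter to range over all of $\R$. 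Hence $\gamma$ is complete, and together with the tangent case this shows every geodesic tangent to $F$ is complete, so $F$ is complete. The one point requiring care — the step I regard as the crux rather than an obstacle, the heavy lifting having been done in the preceding Fact — is this translation between the \emph{length} statement furnished by the infinite band and the \emph{affine-parameter} completeness of the induced connection, which goes through precisely because the transverse speed is constant and strictly positive.
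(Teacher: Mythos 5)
Your proof is correct and follows essentially the same route as the paper: place the geodesic inside the projection of the $\G$-leaf through $p=\mathrm{Vect}(\dot{\gamma}(0),V)\in Y$, invoke the preceding Fact that its universal cover is an infinite degenerate flat band, and convert infinite transverse length into infinite affine parameter using the constant positive speed $g(\dot{\gamma},\dot{\gamma})^{1/2}$. You spell out several steps the paper's very terse proof leaves implicit (the reduction of leaf completeness to completeness of geodesics tangent to the leaf, the $V$-tangent case, and the length-versus-parameter translation), but the underlying argument is the same.
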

\begin{proof}
Let $\gamma$ be a maximal geodesic tangent to $\F$. Let $S_p$ be the maximal totally geodesic surface in $M$ tangent to $p:=\mathsf{span}(\gamma'(0),V)$ (it exists by Fact \ref{Fact 3.1}). Then, by Fact \ref{Fact 3.2}, the universal cover of $S_p$ is an infinite flat band. Then the completeness of $\gamma$ is equivalent to the fact that $\gamma$ cuts all the leaves of $\widetilde{V}$, and this is true since the geodesics of a flat band are straight lines. 
\end{proof}

\section{The geodesic Equation}\label{Section: Geodesic equation}

Let $\gamma$ be a geodesic in $M$. Fix Rosen local coordinates $(u,v,x^1,\ldots,x^{n-1}) \in I \times J \times B_{n-1}(0,1)$, the geodesic equations are given by

\begin{align}\label{Equation geodesique: x(t)}
\ddot{x}^{k} + \Gamma^{k}_{ij}(u,x)\, \dot{x}^{i} \dot{x}^{j} + \Gamma_{iu}^{k}(u,x)\, \dot{x}^{i} \dot{u} =0\;\;\; \forall k \in \{1,\ldots,n-1\}
\end{align}
\begin{align}\label{Equation geodesique: v(t)}
\ddot{v}+ \Gamma_{ij}^{u}(u,x)\, \dot{x}^{i} \dot{x}^{j} + \Gamma_{iu}^{u}(u,x)\, \dot{x}^{i} \dot{u} =0,
\end{align}
\begin{align}\label{Equation geodesique: u(t)}
\ddot{u}=0,
\end{align}
where the $\Gamma_{ab}^c$'s are the Christoffel symbols of the Levi-Civita connection of the metric.

Since $V$ is parallel, $g(\dot{\gamma},V)$ is constant along $\gamma$. We distinguish two different situations: either $g(\dot{\gamma},V)= 0$ or  $g(\dot{\gamma},V) \neq 0$. In the first case, $\gamma$ is contained in a leaf of $\F$ and $u=u_0$ is constant along $\gamma$. In the second  case, one can suppose without loss of generality that $g(\dot{\gamma},V)=1$; this yields $\dot{u}=1$, hence $u=t$ (up to translation of the geodesic parameter). 

We see that when $\gamma$ is tangent to $\F$, the coefficients in the geodesic equations are time independent, and the previous equations become
\begin{align*}
&u=u_0,\\
&\ddot{x}^{k} + \Gamma^{k}_{ij}(u_0,x)\, \dot{x}^{i} \dot{x}^{j} =0\;\;\; \forall k \in \{1,\dots,n\},\\
&\ddot{v}+ \Gamma_{ij}^{u}(u_0,x)\, \dot{x}^{i} \dot{x}^{j}=0.
\end{align*}
They are autonomous equations. 

Denote by $h_{u}$ the Riemannian metric induced by $g$ on each slice $B_u:=\{u\} \times \{0\} \times B_{n-1}(0,1)$. The equation on $x(t)$ can be written  in the following way:

\begin{align}\label{Equation geodesique compacte: x(t)}
\nabla_{\dot{x}(t)}^t \dot{x}(t) = A(t,x) \cdot \dot{x}(t),
\end{align}
where $\nabla^t $ is the Levi-Civita connection of the Riemannian metric $h_{u(t)}$, where either $u(t)=u_0$ or $u(t)=t$, and $A(t,x) \in Gl_{n-1}(\R)$ is given by

$$
A_{ik}(t,x)= \left\{
\begin{array}{ll}
-\Gamma_{iu}^{k}(t,x)  & \mbox{if } \gamma \mbox{\;\;is not tangent to\;\;} \F \\
0 & \mbox{otherwise.}
\end{array}
\right.
$$

\section{Analysis of the geodesic equation}\label{Section: Analysis of geodesic equation}

Let $(M,g,V)$ be a compactly  Brinkmann-homogeneous manifold. Recall that this means that there is a compact subset $\mathcal{K}$ whose orbit 
by the Brinkmann isometry group $\Iso(M,g,V)$ covers all the space.	Observe that in this case, $V$ is a complete vector field. 
\medskip

Let $H$ be a smooth $(n-1)$-dimensional  distribution on $M$, such that for every $p \in M$, $H_p$ is a non-degenerate Riemannian subspace of $T_p M$ tangent to $\F$. This can be defined by taking an auxiliary Riemannian metric $g_0$, and setting $H:=V^{\perp_{g_0}} \cap V^{\perp_{g}}$.
For every $p \in M$,  there exists an open neighborhood $O_p$ of $0$ in $H_p$ such that the exponential map $\exp_p: H_p \subset T_p M \to M$ restricted to $H_p$ is a diffeomorphism from $O_p$ onto its image in $M$. Set $S_p := \exp_p (O_p)$ : it is an $(n-1)$-submanifold through $p$. 
\medskip

Let also $\mathsf{\textbf{Z}}$ be a smooth vector field on $M$ such that $g(\mathsf{\textbf{Z}},\mathsf{\textbf{Z}})=0$, $g(\mathsf{\textbf{Z}},V)=1$ and $\mathsf{\textbf{Z}} \in H^{\perp}$ (it is well defined up to taking a double cover of $M$). Both $H$ and $\mathsf{\textbf{Z}}$ are globally defined on $M$.

For every $p \in M$, consider a normal coordinate system $(x^1,\ldots,x^{n-1})$ on $S_p$. If $\rho = \sum (x^i)^2$, then for $r_p >0$ sufficiently small, $B_p(r_p) := \{q \in S_p, \rho(q) < r_p \}$ is a normal neighborhood of $p$ in $S_p$ diffeomorphic to $B_{n-1}(0,r_p)$, an open ball of $\R^{n-1}$ of center $0$ and radius $r_p$. 
Then, taking $r_p$ smaller if necessary, $p$ admits a Rosen coordinate chart $(u,v,x^1,\ldots,x^{n-1}) \in I_p \times J_p \times B_{n-1}(0,r_p)$, where  $I_p$ and $ J_p$ are intervals of the form $]-l_p,l_p[$,  $l_p>0$.  This chart is defined as in Fact \ref{Fact: Rosen coordinates} on an open neighborhood $U_p$ of $p$ by
\begin{align*}
f_p: &I_p \times J_p \times B_{n-1}(0,r_p) \to U_p \subset M \\
&(u,v,x) \mapsto \psi_u \circ \phi_v (\exp_{p_{|O_p}}(\sum_{i=1}^{n-1} x_i e_i)), \;\; e_i =\partial_{x_i},
\end{align*}
with $f(0,0,0)=p$, where $\phi_v$ is the flow of $V$, and $\psi_u$ is the local flow of a vector field $Z_{p} \in \Gamma(TU_p)$  which  coincides with $\mathsf{\textbf{Z}}$ on $f_p(\{0\}\times \{0\} \times B_{n-1}(0,r_0)) \subset U_p$ and satisfies $g(Z_{p},V)=1$ (see Fact \ref{Fact: Rosen coordinates}).  This new vector field $Z_{p}$ is a perturbation of $\mathsf{\textbf{Z}}$ in the neighborhood $U_p$ of $p$.

A continuity argument on the compact $\mathcal{K}$ shows that there exists $r_0>0$ such that for any $p \in \mathcal{K}$, we can take $B_{n-1}(0,r_p)$ above to be the open ball of radius $r_0$.  
Also, there exists a positive constant $l_0$ such that for every $p \in \mathcal{K}$, one can take $I_p$ and $J_p$ above to be the same interval of length $2 l_0$. 
\medskip

Henceforth, to each point $p \in \mathcal{K}$, we associate a Rosen coordinate chart $U_p \subset M$ as before. We can assume  without loss of generality that  $\frac{1}{2} l_0=r_0=1$. 
Each chart $U_p$ determines vector fields $X_1^p,\ldots,X_{n-1}^p, Z_{p}$ on $U_p$ such that 
\begin{align}\label{Eq: Z_m=Z+V+H}
   Z_{p} = \mathsf{\textbf{Z}} + a_p V + b_1^p X_1^p+\ldots+ b_{n-1}^p X_{n-1}^p, 
\end{align}
where $a_p, b_j^p \in C^{\infty}(U_p, \R)$. By compactness, we can assume that the functions $a_p, b_j^p$ are bounded on $U_p$, uniformly with respect to $p \in \mathcal{K}$. 
\bigskip

Define for $a, b, c \in \{u,v,1,\ldots,n-1\}$, the map $F_{(a,b,c)}: M \times I \times J \times B_{n-1}(0,1) \to \R$, $F_{(a,b,c)}(p, u, v, x) = (\Gamma_{ab}^{c})_p(u,x)$,
where $(\Gamma_{ab}^{c})_p$ is the Christoffel symbol associated to the pull-back of the metric $g_{\vert U_p}$ by the diffeomorphism $f_p$. Since $f_p$ depends smoothly on $p$, $F_{(a,b,c)}$ is a smooth map for every $a, b, c \in \{u,v,1,\ldots,n-1\}$. 
\bigskip

\textbf{Notation:} 
Denote by $h_u^{p}$ the Riemannian metric induced by
the pull-back of the metric $g_{\vert U_p}$ by $f_{p}$
on each slice $\{u\} \times \{0\} \times B_{n-1}(0,1)$.

\begin{lemma}\label{Lemme: comparaison des normes}
	Let $I=J=]-2,2[$ and 
	consider the space $D=I \times J \times B_{n-1}(0,1)$ equipped with a metric $g= 2 \displaystyle{d} u \displaystyle{d} v + g_{ij}(u,x) \displaystyle{d} x^i \displaystyle{d} x^j\; (u,v,x) \in D$. For every $u \in I$, the slice $\{u\} \times \{0\} \times B_{n-1}(0,1)$ inherits a Riemannian metric $h_u$ that depends smoothly on $u$. 	
	Let $c(t) = (u(t), v(t), x(t))$ be a geodesic in $D$, such that if $c(t)$ is not tangent to $\partial_v^\perp$, we set $g(\dot{c}(t),\partial_v)=1$. 
	Then, there exists $\epsilon >0$ and $C >0$ such that for any initial conditions $(u(0)=0, x(0)=0, \dot{x}(0) \in \R^{n-1})$ with  $h_0(\dot{x}(0), \dot{x}(0)) \geq 1$, $x(t)$ exists on $[0, \frac{\epsilon}{\parallel \dot{x(0)} \parallel}_0]$. And for all $t \in [0, \frac{\epsilon}{\parallel \dot{x}(0) \parallel}_0]$,
	$$ \frac{\parallel \dot{x}(t) \parallel_0}{\parallel \dot{x}(0) \parallel_0} \leq \sqrt{1+ Ct} ,$$
	and 
	$$ \frac{\parallel \dot{x}(t) \parallel_t}{\parallel \dot{x}(t) \parallel_0} \leq \sqrt{1+ Ct},$$
 which yields 
 $$ \frac{\parallel \dot{x}(t) \parallel_t}{\parallel \dot{x}(0) \parallel_0} \leq 1+ Ct,$$
	where $\parallel . \parallel_t$ is the norm for the metric $h_{u(t)}$. 
\end{lemma}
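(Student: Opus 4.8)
The plan is to reduce the statement to a single \emph{quadratic} differential inequality for the spatial speed and then to close the argument by a continuity (bootstrap) argument on the interval $[0,\epsilon/\|\dot x(0)\|_0]$. First I would record the structural facts from Section~\ref{Section: Geodesic equation}: since $\ddot u=0$ the coordinate $u$ is affine along $c$. If $\dot u\equiv 0$ the geodesic stays in the slice $\{u=0\}$, the relevant metric is the fixed $h_0$, the equation for $x$ is the intrinsic geodesic equation of $h_0$, and $\|\dot x\|$ is exactly conserved, so both inequalities are trivial. Thus I may assume $\dot u\neq 0$ and, as in the normalization preceding the geodesic equations, take $\dot u=1$ and $u(0)=0$, so $u(t)=t$ and the slice metric $h_t$ of the statement is exactly the metric of the slice through the current point of $c$. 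In Rosen coordinates one has $g_{uu}=g_{ui}=0$, and a direct Christoffel computation gives $\Gamma^i_{uu}=0$ and $2\Gamma^i_{ku}=g^{il}\partial_u g_{kl}$, so the spatial part of the geodesic equation takes the form
\begin{equation*}
\nabla^t_{\dot x}\dot x = A(t,x)\,\dot x,\qquad A^i_k(t,x)=-\,g^{il}(t,x)\,\partial_u g_{kl}(t,x),
\end{equation*}
where $\nabla^t$ is the Levi-Civita connection of $h_t$ (this is the reformulation of the spatial equation given in Section~\ref{Section: Geodesic equation}, here with vanishing inhomogeneous term).

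The key computational step is an exact cancellation. Writing $E(t):=\|\dot x(t)\|_t^2=g_{ij}(t,x)\dot x^i\dot x^j$ and differentiating in $t$, the three contributions are the explicit $u$-derivative $\partial_u g_{ij}\dot x^i\dot x^j$, the spatial-derivative term $\partial_k g_{ij}\dot x^k\dot x^i\dot x^j$, and $2g_{ij}\ddot x^i\dot x^j$. Substituting the geodesic equation and using the metric-compatibility identity $\partial_k g_{ij}=g_{mj}\Gamma^m_{ki}+g_{im}\Gamma^m_{kj}$ for the current slice metric, the cubic-in-$\dot x$ terms cancel exactly and I am left with the clean quadratic identity
\begin{equation*}
\frac{d}{dt}E(t)=-\,\partial_u g_{ij}(t,x(t))\,\dot x^i\dot x^j .
\end{equation*}
On a fixed compact sub-region this gives $|\dot E|\le C\,E$, hence by Gronwall $\|\dot x(t)\|_t^2\le\|\dot x(0)\|_0^2\,e^{Ct}$. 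Since the hypothesis $h_0(\dot x(0),\dot x(0))\ge 1$ forces $t\le\epsilon$ throughout $[0,\epsilon/\|\dot x(0)\|_0]$, the exponential is bounded and convexity yields $e^{Ct}\le 1+C't$ there; this is the current-metric form of the desired bound.

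To obtain the two inequalities exactly as stated I would compare the frozen metric $h_0$ with the moving metric $h_t$ at the same point $x(t)$. Because $g_{ij}$ is smooth in $u$ and $u=t$, one has $\|g_{ij}(t,\cdot)-g_{ij}(0,\cdot)\|\le Ct$ on the compact sub-region, so $\|w\|_t/\|w\|_0$ and $\|w\|_0/\|w\|_t$ are both $\le 1+Ct$ for every vector $w$. Applying this to $w=\dot x(t)$ gives the second inequality directly, and combining it with the Gronwall bound gives $\|\dot x(t)\|_0\le(1+Ct)\|\dot x(t)\|_t\le(1+C''t)\|\dot x(0)\|_0$, which is the first inequality.

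The step I expect to be the main obstacle is making this rigorous on the \emph{prescribed} interval, since a priori the estimates hold only where the coefficients are uniformly controlled, whereas $B_{n-1}(0,1)$ is merely open. I would run a continuity/bootstrap argument: on the maximal subinterval where $x$ stays in $\overline{B_{n-1}(0,1/2)}$ and $\|\dot x\|_0\le 2\|\dot x(0)\|_0$, all the bounds above apply and improve to $\|\dot x\|_0\le(1+C''t)\|\dot x(0)\|_0<2\|\dot x(0)\|_0$, while the spatial distance travelled is at most of order $\|\dot x(0)\|_0\cdot(\epsilon/\|\dot x(0)\|_0)=O(\epsilon)$, which is $<1/2$ once $\epsilon$ is chosen small \emph{independently of the geodesic}. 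Hence the good region is never exited before $t=\epsilon/\|\dot x(0)\|_0$, which simultaneously gives existence of $x(t)$ on the whole interval and validates the estimates there. The point making $\epsilon$ uniform is precisely that a fast geodesic is followed only for a correspondingly short time, so the distance it covers is bounded regardless of its initial speed.
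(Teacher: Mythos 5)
Your proof is correct, and while its overall architecture (control an energy, exploit that $t\le\epsilon$ on the interval $[0,\epsilon/\Vert\dot x(0)\Vert_0]$ to turn exponential bounds into linear ones, then compare $h_t$ with $h_0$ by smoothness in $u$) matches the paper's, the central computation is genuinely different. The paper rescales time, setting $y(t)=x(\lambda^{-1}t)$ with $\lambda=\Vert\dot x(0)\Vert_0$ so that the initial speed becomes $1$, freezes the connection at $u=0$, Taylor-expands the Christoffel symbols as $\Gamma^k_{ij}(\lambda^{-1}t,y)=\Gamma^k_{ij}(0,y)+\lambda^{-1}tF^k_{ij}(\lambda^{-1}t,y)$, and computes $\tfrac{d}{dt}h_0(\dot y,\dot y)=2h_0(\nabla^0_{\dot y}\dot y,\dot y)$: the zeroth-order Christoffels cancel against $\nabla^0$ and the remainder is $O(1/\lambda)$, which integrates to $h_0(\dot y,\dot y)\le 1+Ct/\lambda$. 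You instead keep the original parametrization and differentiate the energy of the \emph{moving} metric, obtaining the exact identity $\tfrac{d}{dt}\,h_t(\dot x,\dot x)=-\partial_u g_{ij}\,\dot x^i\dot x^j$ (I checked the cancellation of the cubic terms via metric compatibility; it is correct in Rosen coordinates, where indeed $\Gamma^k_{uu}=0$ and the ambient spatial Christoffels are those of $h_u$), and you close with Gronwall. Your route buys two things: an exact cancellation in place of a perturbative expansion, and the fact that Gronwall automatically supplies the a priori boundedness of the speed that the paper only asserts (``$\dot y(t)$ has a uniformly bounded $h_0$ norm''). What the paper's rescaling buys is a painless uniform existence time, since after normalization all initial data lie in the compact set of $h_0$-unit vectors; you recover this by your bootstrap, and your confinement estimate --- total distance travelled of order $\Vert\dot x(0)\Vert_0\cdot\epsilon/\Vert\dot x(0)\Vert_0=O(\epsilon)$, independent of the initial speed --- is precisely the mechanism that makes $\epsilon$ uniform, the same one the paper exploits after undoing the rescaling. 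The only cosmetic difference in the conclusion is that you first bound $\Vert\dot x(t)\Vert_t$ and deduce the $h_0$-inequality through the norm comparison, whereas the paper obtains the $h_0$-inequality directly; just enlarge the constant once when composing the two estimates so that a single $C$ serves both displayed inequalities, as the statement requires.
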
 
\begin{proof}
The first inequality is the hardest to prove.
Consider in the  ball $B_{n-1}(0,1)$ of $\R^{n-1}$:
\begin{align}\label{Lemma: non_autonomeous_equation}
\ddot{x}^k = -  \Gamma_{ij}^k (u, x) \dot{x}^i \dot{x}^j + A_{ik}(u, x) \dot{x}^i,
\end{align}
for $k=1,\ldots,n-1$, with initial conditions $x(0) =0$, and $\dot{x}(0) \in \R^{n-1}$ such that $h_0(\dot{x}(0), \dot{x}(0)) \geq 1$. 	
If $c(t)$ is tangent to $\partial_v^\perp$, then $u(t)$ is identically zero (we assumed $u(c(0))=0$). Otherwise, $u(t)=t$. 

Consider $\alpha(t) := c(\lambda^{-1} t)$, with $\lambda:= \parallel \dot{x}(0) \parallel_0$, and its component $y(t):=x(\alpha(t))=x(\lambda^{-1} t)$. Then $u(\alpha(t))=\lambda^{-1} u$, since $u(t)$ is a linear function of $t$. And $y(t)$ satisfies the equation
\begin{align}\label{Lemma: normalization}
\ddot{y}^k (t)  = - \Gamma_{ij}^k ( \lambda^{-1}  u,  y) \dot{y}^i \dot{y}^j +  \lambda^{-1} A_{ik}( \lambda^{-1} u, y) \dot{y}^i,
\end{align}
with initial conditions $y(0) =0$ and $h_0(\dot{y}(0), \dot{y}(0))=1$. 
\bigskip

Write $\Gamma_{ij}^k ( \lambda^{-1}  u,  y) = \Gamma_{ij}^k (0,  y) + \lambda^{-1} u F_{ij}^k( \lambda^{-1}u , y)$, where 
$F_{ij}^k$ are continuous and $u \in [0, 1]$.

Consider 
$$\frac{\partial}{\partial t}h_0 (\dot{y}, \dot{y}) = 2 h_0(\nabla^0_{\dot{y}}\dot{y}, \dot{y}). $$

But $$ \nabla^0_{\dot{y}}\dot{y} =  \big( \ddot{ y}^k (t) + \Gamma_{ij}^k (0,  y) \dot{y}^i \dot{y}^j \big) \partial_k, \;\hbox{with}\; \partial_k = \frac{\partial}{\partial y^k}.$$

Thus, from Equation (\ref{Lemma: normalization}), 
$$  \nabla^0_{\dot{y}}\dot{y}  =  \frac{1}{\lambda} \big( u F_{ij}^k( \lambda^{-1}u , y) \dot{y}^i \dot{y}^j +   A_{ik}( \lambda^{-1} u  , y) \dot{y}^i  \big) \partial_k,$$ 

hence
\begin{align}\label{Lemma: Equation differentielle sur la norme}
\frac{\partial}{\partial t}h_0 (\dot{y}, \dot{y}) =  \frac{2}{\lambda} h_0 ( ( u F_{ij}^k( \lambda^{-1}u, y) \dot{y}^i \dot{y}^j +   A_{ik}( \lambda^{-1} u, y) \dot{y}^i ) \partial_k, \dot{y} ).
\end{align}

 \bigskip 
There exists  $\epsilon >0$ such that for any initial conditions $(y(0)=0, \dot{y}(0) \in \R^{n-1})$, with $h_0(\dot{y}(0), \dot{y}(0))=1$, $y(t)$ is defined on $[0,\epsilon]$ and $\dot{y}(t)$ has a uniformly bounded $h_0$ norm. 
We will use Equation (\ref{Lemma: Equation differentielle sur la norme}) to get a finer control of the $h_0$ norm of $\dot{y}(t)$ for $t \in [0, \epsilon]$. A continuity argument on the coefficients involved in the equation ensures the existence of constants $\epsilon' >0$ and $r_0>0$ such that all the coefficients are bounded on $[0, \epsilon'] \times I \times B_{n-1}(0,r_0)$. 

We denote again $\epsilon$ instead of $\epsilon'$.
Therefore, there is a constant $C>0$ such that:
$$2h_0 \big( \big( u F_{ij}^k( \lambda^{-1}u , y) \dot{y}^i \dot{y}^j +   A_{ik}( \lambda^{-1} u , y) \dot{y}^i, y)  \big) \partial_k,  \dot{y} \big) \leq C, \;t \in [0, \epsilon].$$
Hence
$$\frac{\partial}{\partial t}h_0 (\dot{y}, \dot{y}) \leq  \frac{C}{\lambda}, t\in [0, \epsilon]. $$
Thus $$h_0(\dot{y}(t), \dot{y}(t)) \leq  \frac{C}{\lambda} t + 1, t \in [0, \epsilon] \;  (\hbox{since} \; h_0(\dot{y}(0), \dot{y}(0) = 1). $$ 
This gives
\begin{align}
\sqrt{ \frac{h_0(\dot{x}(t), \dot{x}(t))}{h_0(\dot{x}(0), \dot{x}(0))}  }\leq   \sqrt{1+ Ct },\;\; t \in [0,  \frac{\epsilon}{\lambda}],
\end{align}
which yields the first inequality.\\

Let us now assume $u=t$, and compare the $h_{t}$ and the $h_0$-norms of  $\dot{x}(t)$, for $t \in   [0,  \frac{\epsilon}{\parallel  \dot{x}(0)\parallel}_0]$.
Consider the function   $l(t,w) :=  \frac{h_t(w, w)}{h_0(w, w)}$, for $t \in [0,1]$ and $w \in \R^{n-1}$. Observe that $l(t,\alpha w) = l(t,w)$, hence 
we can assume $h_0(w, w) = 1$. By smoothness and compactness,    $\frac{\vert l(t,w) - l(0,w) \vert}{t}  = \frac{\vert l(t,w) - 1) \vert}{t}
$ is bounded by some constant, say the same $C$ as above. 
So 
$$ \sqrt{\frac{h_t(\dot{x}(t) ,  \dot{x}(t))}{h_0( \dot{x}(t),  \dot{x}(t))} } \leq  \sqrt{1 + Ct}.$$
\end{proof}

\begin{lemma}\label{Lemme: la geodesique definie tant que x((t) est definie)}
Let $\gamma$ be a geodesic of $M$, and set $p:=\gamma(0)$. If $\gamma$ is not tangent to $\F$, we parameterize in such a way that  $g(\dot{\gamma}, V)=1$. Let $(u,v,x)$ be the system of coordinates on the Rosen chart $f^{-1}_p(U_p)=I \times J \times B_{n-1}(0,1)$. 
If $x(t)$ is defined on $[0,T]$, then  $\gamma(t)$ is defined on $[0,T']$, with $T':= \inf(1, T)$. 
\end{lemma}
\begin{proof}	
It follows from Equation (\ref{Equation geodesique: u(t)}) on $u(t)$ that either $u(t) \equiv 0$ or $u(t)= t$.  Consider the interval $[0,T]$ on which $x(t)$ is defined. Then $u(t)$ is defined on the same interval, and $\vert u(T) - u(0) \vert < \frac{1}{2}\vert I \vert$, where $\vert I \vert$ denotes the length of $I$. Then on $[0, T']$, both the $x(t)$ and the $u(t)$ components are defined. 

It appears from Equation (\ref{Equation geodesique: v(t)}) that  $v(t)$  is defined on  the same interval as $x(t)$. Set $l:=\vert v(0) - v(T) \vert$.  However,  $v(t)$ may well leave the interval $J$ before $x(t)$ leaves the ball $B_{n-1}(0,1)$, so that $\gamma_{\vert U_{p}}$ is not defined on $[0,T']$. To prove that $\gamma$ is defined on $[0,T']$, we will use the fact that the flow of $V$ acts isometrically on $M$. Denote by $\phi^s$ the flow of $V$. Set $R:= \frac{1}{2}\vert J \vert$. 

Assume that $l \geq R$. Let $q_0:=\gamma(\alpha_0) \in U_{p}$ such that $\frac{R}{2}<s_0:=v(\alpha_0)<R$.  The translate $W_1:=\phi^{s_0}(U_{p})$ of the Rosen chart at $p$ is a Rosen chart at $\phi^{s_0}(p)$. We denote by $(u_1,v_1,x_1):=(u,v,x) \circ \phi^{-s_0}$ the determined Rosen coordinates in the new chart. Consider the geodesic $\gamma_1$ such that $\gamma_1(0)=q_0$ and $\gamma_1'(0)=\gamma'(\alpha_0)$.
Then $u_1(t)$, $v_1(t)$ and $x_1(t)$ are defined on $[0, T'-\alpha_0]$, with $v_1(t)=v(t+\alpha_0)-v(\alpha_0)$.
Since by definition $R - v(\alpha_0) < \frac{R}{2}$, the geodesic $\gamma_1$ extends $\gamma_{\vert U_{p}}$ on $W_1$, beyond $\overline{U_{p}}$. 
If $l- v(\alpha_0) <R$, then $\vert v_1(T'-\alpha_0)-v_1(0) \vert <R$. In this case, all the components of $\gamma_1$ are defined on $[0, T'-\alpha_0]$, and $\gamma_1$ extends $\gamma_{\vert U_{p}}$, which proves that $\gamma$ is defined on $[0,T']$. If not, 
consider  $q_1:=\gamma(\alpha_1) \in W_1 \smallsetminus \overline{U_{p}}$ such that $s_0+\frac{R}{2}<s_1:=v(\alpha_1)<s_0+R$. Doing this, we obtain a  sequence $q_0, q_1,\ldots,q_i,\ldots$ such that $q_{i+1}:= \gamma(\alpha_{i+1}) \in   W_{i+1} \smallsetminus \overline{W_i}$ and $s_i+\frac{R}{2}<s_{i+1}:=v(\alpha_{i+1})<s_i+R$. And we have $v(\alpha_i) > \frac{(i+1)R}{2}$. 
At some moment, we will have $l - v(\alpha_n) <R$, it suffices to take $n:=\lfloor 2 (\frac{l}{R} -1) \rfloor +1$. So the sequence above is finite. At each step, we can then extend $\gamma_{\vert U_p \cup \bigcup_{j=0}^i W_j}$ as before on $W_{i+1}$. The condition $l - v(\alpha_n) <R$ ensures that the process will stop at $W_n$, and $\gamma_{\vert U_p \cup \bigcup_{j=0}^{n-1} W_j}$ extends on $W_n$ to be defined on $[0,T']$. 
\end{proof}
\begin{theorem}\label{theorem-main: compact Brinkmann is complete} 
A compactly Brinkmann-homogeneous spacetime is complete.
\end{theorem}

\begin{proof}
\underline{Step 1:}  Let $\gamma$ be a geodesic in $M$. If $\gamma$ is not tangent to $\mathcal{F}$, we parameterize in such a way that $g(\dot{\gamma}, V) = 1)$.  Assume $p:=\gamma(0) \notin \mathcal{K}$. By assumption, there exists a Brinkmann isometry, i.e. $\varphi \in \Iso(M,g,V)$, such that the new geodesic $\alpha(t):=\varphi(\gamma(t))$ satisfies 
$\alpha(0) \in \mathcal{K}$ and $g(\dot{\alpha},V)=1$. We write the geodesic equation for $\alpha(t)$ in the Rosen chart $U_{\varphi(p)}$ associated to $\varphi(p) \in \mathcal{K}$. Since $g(\dot{\alpha},V)=1$, we can apply Lemma \ref{Lemme: comparaison des normes} to estimate the time existence of $\alpha(t)$, which is the same for $\gamma(t)$. We do this systematically, when we deal with a point not contained in $\mathcal{K}$. 
\medskip

Since we will use different Rosen charts, we will write $(u_q,v_q,x_q)$ for the Rosen coordinates of a Rosen chart $f_q^{-1}(U_q)$. We have $(u_q,v_q,x_q)(q)=(0,0,0)$.
\medskip

So let $p \in \mathcal{K}$, and $\gamma$ such that $\gamma(0)=p$. Write $(u_p(t),v_p(t),x_p(t))=(u_p,v_p,x_p)(\gamma(t))$.

The geodesic equation in the Rosen chart $f_p^{-1}(U_p)$ gives  the following equations in the  ball $B_{n-1}(0,1)$ of $\R^{n-1}$:
\begin{align}\label{non_autonomeous_equation}
	\ddot{x}^k_p = -  \Gamma_{ij}^k (u, x_p) \dot{x}^i_p \dot{x}^j_p + A_{ik}(u, x_p) \dot{x}^i_p,
\end{align}
for $k=1,\ldots,n-1$, with initial conditions $(x_p(0) =0, \dot{x}_p(0) \in \R^{n-1})$, and either $u_p(t)\equiv 0$ or $u_p(t)=t$. \\

We will use the same notation $\parallel . \parallel_t$ introduced in Lemma \ref{Lemme: comparaison des normes}. When we write $\parallel \dot{x}_p(t) \parallel_{t}$, this  means that the vector $\dot{\gamma}(t)$ is decomposed in the basis $(Z_{p}, V, X_1^{p},\ldots,X_{n-1}^{p})$ associated to $U_{p}$, and we compute the $h^{p}_{u(t)}$-norm of its $x_p$-component in this basis.
\medskip

$\bullet$ There exists $\epsilon >0$ such that for any $p \in \mathcal{K}$, and any initial condition $(x_p(0)=0,\dot{x}_p(0) \in \R^{n-1})$ such that $\parallel \dot{x}_p(0)	\parallel_{0} \leq 1$, the solution $x_p(t)$ is defined on $[0, \epsilon]$.  Indeed, since $\mathcal{K}$ is compact, the subset of vectors $X \in T_0 \R^{n-1}$ such that $h_0^p(X,X) \leq 1$ for some $p\in \mathcal{K}$, is a compact set in $T_0 \R^{n-1}$, hence, the choice of $\epsilon$ is uniform with respect to this data.  \\

$\bullet$ On the other hand, by Lemma \ref{Lemme: comparaison des normes},  there exist $\epsilon' >0$ and $C>0$ such that for any initial conditions $(x_p(0)=0, \dot{x}_p(0) \in \R^{n-1})$ with $ \parallel \dot{x}_p(0) \parallel_{0} \geq 1$, $x_p(t)$ exists on $[0, \frac{\epsilon'}{\parallel \dot{x}_p(0)\parallel_{0}}]$, and 
\begin{align}
\forall t \in [0,  \frac{\epsilon'}{\parallel  \dot{x}_p(0)\parallel_{0}}], \;\; \frac{  \parallel \dot{x}_p(t) \parallel_{t}  }{ \parallel \dot{x}_p(0) \parallel_{0} }  \leq   1+ Ct.
\end{align}

This gives for $  t \in [0,  \frac{\epsilon'}{\parallel  \dot{x}_p(0)\parallel_{0}}]$, 
\begin{align*}
 \parallel \dot{x}_p (t) \parallel_{t}  &\leq \parallel \dot{x}_p(0) \parallel_{0} (1+ Ct)   \\
 & \leq \parallel \dot{x}_p(0) \parallel_{0} +  
C\parallel\dot{x}_p(0)\parallel_{0} \frac{\epsilon'}{\parallel \dot{x}_p(0) \parallel_{0}} \\
 &= \parallel \dot{x}_p (0)  \parallel_{0} + \epsilon' C.
\end{align*}

If we look again at the proof of Lemma  \ref{Lemme: comparaison des normes}, it follows from the compactness of $\mathcal{K}$ and the smoothness of the maps $F_{(a,b,c)}$ defined at the beginning of this section that the choices of $\epsilon'$ and $C$ are actually uniform with respect to $p \in \mathcal{K}$. \\

\noindent \textbf{Notation:} 
From now on, and since there will be no confusion, we will denote simply $\parallel \dot{x}_p(t) \parallel:=\parallel \dot{x}_p(t) \parallel_t$, for $p \in \mathcal{K}$.
\bigskip

\underline{Step 2:} Set $s_0=0$ and $q_0:=\gamma(s_0)$. 
By Step 1, $\gamma(t)=(u_{q_0}(t),v_{q_0}(t),x_{q_0}(t))$ is defined on $[0,s_1]$, with $s_1 = \inf(1,\inf(\epsilon, \frac{\epsilon'}{\parallel \dot{x}_{q_0}(0) \parallel}))$. And, 
$$\parallel \dot{x}_{q_0}(s_1) \parallel \leq \parallel \dot{x}_{q_0}(0) \parallel + \epsilon' C.$$
\noindent Set $q_1 := \gamma(s_1)$.

Next, set $\alpha(t):=\gamma(t+s_1)$, and put $\alpha(t)=(u_{q_1}(t),v_{q_1}(t),x_{q_1}(t))$ in the local chart $f_{q_1}^{-1}(U_{q_1})$. Write Equation (\ref{Equation geodesique: x(t)}) on $f_{q_1}^{-1}(U_{q_1})$ with initial conditions $x_{q_1}(0)=0$ and $\dot{x}_{q_1}(0) \in \R^{n-1}$ equal to the $x_{q_1}$-component of $\dot{\gamma}(s_1)$ in the frame associated to the local chart $U_{q_1}$. Then, $\gamma(t+s_1)$ is defined on $[0, s_2]$, with $s_2= \inf(1, \epsilon, \frac{\epsilon'}{\parallel \dot{x}_{q_1}(0) \parallel})$.
\bigskip

$\bullet$ Compare between $\parallel \dot{x}_{q_0}(s_1) \parallel$ and $\parallel \dot{x}_{q_1}(0) \parallel$ (here, we compare between the norms of the $x$-components of $\dot{\gamma}(s_1)$ in the two frames associated respectively to $U_{q_0}$ and $U_{q_1}$).

From (\ref{Eq: Z_m=Z+V+H}) we have that 
\begin{align*}
  Z_{q_0}&=\mathsf{\textbf{Z}}+a_{q_0} V + b_1^{q_0} X_1^{q_0} + \ldots +b_{n-1}^{q_0} X_{n-1}^{q_0} \\
  Z_{q_1}&=\mathsf{\textbf{Z}}+a_{q_1} V + b_1^{q_1} X_1^{q_1} + \ldots +b_{n-1}^{q_1} X_{n-1}^{q_1},
\end{align*}
where $a_{q_i}$ and $b_j^{q_i}$ are bounded functions. 
So on $U_{q_0} \cap U_{q_1}$ we can write for $i \in \{0,1\}$: 
\begin{align*}
    \dot{\gamma}(t+s_i)&= Z_{q_i} + \dot{v}_{q_i}(t) V + \dot{x}_{q_i}^1(t) X_1^{q_i}+ \ldots + \dot{x}_{q_i}^{n-1}(t) X_{n-1}^{q_i}.
\end{align*}
which gives 
\begin{align}\label{Eq: compare x-norms in different frames}
  \dot{\gamma}(t+s_i)-\mathsf{\textbf{Z}} = (a_{q_i}+\dot{v}_{q_i}(t)) V + \sum_{j=1}^{n-1} b_j^{q_i}  X_j^{q_i}  + \sum_{j=1}^{n-1} \dot{x}_{q_i}^j(t) X_j^{q_i}. 
\end{align}
Computing the norms for the Lorentzian metric $g$ in both sides of Equation (\ref{Eq: compare x-norms in different frames}), at the point $t=s_1$ when $i=0$, and at $t=0$ when $i=1$, 
and using the triangle inequality, we obtain  
\begin{align*}
    &\vert \parallel \dot{x}_{q_0}(s_1) \parallel - g(\dot{\gamma}(s_1)-\mathsf{\textbf{Z}}, \dot{\gamma}(s_1)-\mathsf{\textbf{Z}})^{\frac{1}{2}} \vert \leq  g\left( \sum_{j=1}^{n-1} b_j^{q_0}  X_j^{q_0} , \sum_{j=1}^{n-1} b_j^{q_0}  X_j^{q_0}\right)^{\frac{1}{2}} \\
    & \vert \parallel \dot{x}_{q_1}(0) \parallel - g(\dot{\gamma}(s_1)-\mathsf{\textbf{Z}}, \dot{\gamma}(s_1)-\mathsf{\textbf{Z}})^{\frac{1}{2}} \vert \leq g\left( \sum_{j=1}^{n-1} b_j^{q_1}  X_j^{q_1} , \sum_{j=1}^{n-1} b_j^{q_1}  X_j^{q_1} \right)^{\frac{1}{2}}
\end{align*}
which yields the existence of a constant $B>0$  such that 
\begin{align*}
    \vert \parallel \dot{x}_{q_0}(s_1) \parallel - \parallel \dot{x}_{q_1}(0) \parallel \vert \leq B.
\end{align*}
This constant $B>0$ is independent of $s_1$. 

\bigskip

\underline{Concluding:} We get a sequence $\gamma(t_n)$, such that $t_n:= \sum_{i=0}^n s_i$, where $s_i$ is\\

\noindent - either equal to $\epsilon$ or to $1$ for infinitely many $i$, \\
\noindent - or $s_{i}= \frac{\epsilon' }{ \parallel \dot{x}_{q_{i-1}}(0) \parallel}$, for all but a finite number of $i$, with the following inequalities
\begin{align*}
& \vert \parallel \dot{x}_{q_{i}}(s_{i+1}) \parallel - \parallel \dot{x}_{q_{i+1}}(0) \parallel \vert \leq B   \\
&  \parallel\dot{x}_{q_i}(s_{i+1})\parallel   \leq  
\parallel\dot{x}_{q_i}(0)\parallel  + \epsilon' C.
\end{align*}
In this case,  the sequence 
$\parallel \dot{x}_{q_i}(0) \parallel $ is  $(B+ \epsilon' C)$)-dense in the interval  $[\parallel\dot{x}_{q_0}(0)\parallel,  + \infty[$, hence
$\Sigma \frac{\epsilon' }{ \parallel \dot{x}_{q_i}(0) \parallel} = \infty$. \\

So in all cases, $\sum s_i = \infty$, and $\gamma$ is then complete.
\end{proof}

\section{Proof of Theorem \ref{reduction}, An Adapted Cartan structure}\label{Cartan_Connection}
	
The aim of the present section is to prove the existence of a core  as in  Theorem \ref{reduction}, the algebraic description which completes the proof will be given in the next section.  The exposition is  largely inspired by  Sections  4  and 5 of \cite{Fra},  and also by \cite{Pec} which  provides a natural and efficient approach to Gromov's theory on rigid transformation groups (see \cite{Gro}) via Cartan connections.

\subsection{Null frames}  Let $(E, \langle, \rangle)$ be a linear Lorentzian space of dimension $n+1$. A frame $(e_0, \ldots, e_n)$ is said to be orthonormal if all the vectors $e_i$ are orthogonal and $1 = -  \langle e_0,  e_0\rangle = + \langle e_1,  e_1\rangle = \ldots =\langle e_n,  e_n\rangle$.  This frame is said to be null if all Lorentzian products vanish but $1 = \langle e_0,  e_n\rangle = \langle e_1, e_1 \rangle = \ldots  = \langle e_{n-1},  e_{n-1}\rangle$ (in particular $e_0$ and $e_n$ are lightlike).  There is a simple correspondence between orthonormal and null frames, mainly, if $(e_0, \ldots, e_n)$ is orthonormal, then   $(\frac{e_0 + e_n}{\sqrt{2}},  e_2, \ldots, e_{n-1},  \frac{- e_0  + e_n}{\sqrt{2}}) $ is a null frame. 
	
Let $(M, g)$ be a Lorentzian manifold. Classical connections, as well as Cartan connections,  are usually developed on the orthonormal frame bundle, but there is no harm to consider  null frames instead. So let $\widehat{M}$ be the space of null frames (in all tangent spaces of $M$).  This is a $\O(1, n)$-principal bundle ($n +1 = \dim M $). 
	
\subsection{Adapted Cartan structure}
\subsubsection{Construction}
Let $V$ be a (non-singular) null vector field on $M$ and consider $\widehat{V} \subset \widehat{M}$ be those null frames starting with $V$. This is a $H$-principal bundle, where $H$ is the subgroup of $\O(1, n)$ preserving a lightlike vector. 
	
Consider the  Lorentzian quadratic form $Q = x_0 x_n + x_1^2 + \ldots + x_{n-1}^2$, for which the canonical basis is null. Elements of the orthogonal group $\O(Q)$ preserving $e_0$ have matrices of the form: 
\begin{equation*}
\begin{pmatrix}
			1 & \lambda_{1} & \cdots & \lambda_{n-1} & * \\
			0 & \alpha_{1,1} & \cdots & \alpha_{1,n-1} & * \\
			\vdots  & \vdots  & \ddots & \vdots & \vdots \\
			0 & \alpha_{n-1,1} & \cdots & \alpha_{n-1,n-1} & * \\
			0 & 0 & \cdots & 0 & 1
\end{pmatrix}
\end{equation*}
where $\lambda= (\lambda_1,\ldots,\lambda_{n-1}) \in \R^{n-1}$ and $\alpha= (\alpha_{ij})_{1 \leq i,j \leq n-1} \in \SO(n-1)$, and the $*$-entries  are completely determined by $\lambda$ and $\alpha$.

It follows in particular that $H $ is isomorphic to $\SO(n-1) \ltimes \R^{n-1}$, which is  the Euclide group $\mathsf{Euc}_{n-1}$, the (affine)  isometry group of the Euclidean space $\R^{n-1}$. 
	
The Levi-Civita connection associated to $g$ gives rise to a bundle connection on $\widehat{M}$,  i.e. a horizontal $\O(1, n)$-invariant distribution $\H$. This yields a ``tautological'' parallelism of $T\widehat{M}$. From the point of view of Cartan connections, this parallelism is expressed as a vectorial  differential 1-form $\omega$ which establishes, for any $\widehat{x}\in \widehat{M}$,  an isomorphism $\omega_{\widehat{x}}: T_{\widehat{x}} \textcolor{red}{\widehat{M}} \to \mathfrak{po}$, where $\mathfrak{po}$ is the Lie algebra of the Poincar\'e group $\mathsf{Poi}_{1, n} =  \O(1, n) \ltimes \R^{n+1}$. Furthermore, $\omega$ is $\O(1, n)$-equivariant, and sends the tangent space of fibers identically  to $\mathfrak{o}(1, n) \subset \mathfrak{po}$. We also have that the horizontal $\H$ is sent by $\omega$ to $\R^{n+1}$, see \cite{Pec, Fra}.

\begin{fact} The horizontal $\H$ is tangent to $\widehat{V}$ iff $V$ is parallel, iff,  $\omega$ sends $\widehat{V}$ to the Lie subalgebra $ \h \ltimes \R^{n+1} \subset \mathfrak{po}$.
\end{fact}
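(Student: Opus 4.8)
The plan is to convert both equivalences into a single statement about the $\O(1,n)$-equivariant function that encodes $V$ in frame coordinates, and then read off the connection-theoretic content from the Cartan form $\omega$. First I would introduce the equivariant map $f_V : \widehat{M} \to \R^{n+1}$ sending a null frame $\widehat{x} = (e_0,\dots,e_n)$ to the coordinate vector $\widehat{x}^{-1}(V(\pi(\widehat{x})))$ of $V$ in that frame. Since the first vector of any frame has coordinates $\epsilon_0 = (1,0,\dots,0)$ in its own frame, the subbundle $\widehat{V}$ is exactly the level set $f_V^{-1}(\epsilon_0)$; as $\widehat{V}$ is a submanifold projecting onto all of $M$ (here I use that $V$ is non-singular), this is a regular level set and hence $T_{\widehat{x}}\widehat{V} = \ker(d_{\widehat{x}} f_V)$ for every $\widehat{x} \in \widehat{V}$.

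The one genuinely load-bearing ingredient is the standard identity relating the horizontal derivative of an equivariant function to the covariant derivative of the section it represents: for a horizontal $X \in \H_{\widehat{x}}$ projecting to $w = d\pi(X)$, one has $d f_V(X) = \widehat{x}^{-1}(\nabla_w V)$, the connection term dropping out precisely because $X$ is horizontal. With this in hand the first equivalence is immediate: $\H$ is tangent to $\widehat{V}$ means $d f_V$ vanishes on $\H$ at every point of $\widehat{V}$, and by the identity (with $w$ ranging over all of $T_x M = d\pi(\H_{\widehat{x}})$) this says exactly $\nabla_w V = 0$ for all $w$, i.e. $V$ is parallel.

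For the third condition I would read ``$\omega$ sends $\widehat{V}$ to $\h \ltimes \R^{n+1}$'' as $\omega(T\widehat{V}) = \h \ltimes \R^{n+1}$, and decompose $\omega = \omega_{\mathfrak{o}} \oplus \omega_{\R^{n+1}}$ into its connection and soldering parts. Two bookkeeping facts are recorded: the soldering part $\omega_{\R^{n+1}}$ is onto $\R^{n+1}$ on $T\widehat{V}$ (because $\pi : \widehat{V} \to M$ is a submersion), and the vertical directions of $\widehat{V}$, tangent to the $H = \mathsf{Euc}_{n-1}$-orbits, map under $\omega_{\mathfrak{o}}$ exactly onto $\h$. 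If $V$ is parallel, the first equivalence gives $\H_{\widehat{x}} \subseteq T_{\widehat{x}}\widehat{V}$, so $\omega(T\widehat{V}) \supseteq \omega(\H_{\widehat{x}}) = \R^{n+1}$ together with the vertical $\h$; matching dimensions $\dim \widehat{V} = (n+1) + \dim H = \dim(\h \ltimes \R^{n+1})$ upgrades this to equality. Conversely, assuming $\omega(T\widehat{V}) \subseteq \h \ltimes \R^{n+1}$, I would take any horizontal $X \in \H_{\widehat{x}}$, use surjectivity of the soldering part to find $Y \in T_{\widehat{x}}\widehat{V}$ with the same $\R^{n+1}$-component, and observe that $X - Y$ is then vertical with $\omega_{\mathfrak{o}}(X-Y) \in \h$, hence tangent to the $H$-orbit and so to $\widehat{V}$; thus $X \in T_{\widehat{x}}\widehat{V}$, giving back that $\H$ is tangent to $\widehat{V}$ and therefore $V$ parallel.

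The routine parts are the linear algebra in $\mathfrak{po} = \mathfrak{o}(1,n) \ltimes \R^{n+1}$ and the dimension count, which use only that $\widehat{V}$ is an $H$-subbundle over $M$. The main point requiring care is the identity $d f_V(X) = \widehat{x}^{-1}(\nabla_w V)$ on horizontal vectors, and in particular getting its normalization right; once that is in place all three conditions are tied together through the single statement $d f_V|_{\H} = 0$.
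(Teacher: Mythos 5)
Your proposal is correct, and for the first equivalence it takes a genuinely different route from the paper. The paper argues via curves: the horizontal lift of a curve $\gamma$ through $u\in\widehat{V}$ is a parallel frame field along $\gamma$, so $V$ is parallel along $\gamma$ exactly when that lift stays inside $\widehat{V}$; tangency of $\H$ to $\widehat{V}$ is then read off from (and, conversely, forces) this containment of horizontal lifts. You instead encode $V$ as the $\O(1,n)$-equivariant function $f_V:\widehat{M}\to\R^{n+1}$, identify $\widehat{V}$ with the level set $f_V^{-1}(\epsilon_0)$ so that $T\widehat{V}=\ker df_V$, and invoke the dictionary $df_V(X)=\widehat{x}^{-1}(\nabla_{d\pi(X)}V)$ on horizontal vectors. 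Both are standard; your version has the small advantage of avoiding the ``everywhere tangent to a submanifold, hence contained in it'' step that the paper's curve argument implicitly relies on, at the cost of having to justify that $\epsilon_0$ is a regular value (which really means: $f_V$ is a submersion onto the null cone in $\R^{n+1}$, not onto all of $\R^{n+1}$ --- your kernel identification still goes through, but it is worth saying). For the second equivalence your argument is essentially the paper's, only more explicit: the paper simply notes that $\omega^{-1}(\{0\}\times\R^{n+1})=\H$ and that $\widehat{V}$ is an $H$-subbundle, while you spell out the decomposition of $\omega$ into connection and soldering parts, the surjectivity of the soldering part on $T\widehat{V}$, and the dimension count; these details are exactly what the paper leaves to the reader.
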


\begin{proof} 
		Let $X \in \H_u, u \in \widehat{V}$, and $\gamma$ a curve in $M$ such that $\gamma(0) = \pi(u)$ and $\gamma'(0)= d_u \pi(X)$. The horizontal lift of $\gamma$ starting at $u$ is the curve $u(t)$ such that $u(t)$ is a parallel frame field along $\gamma$ with $u(0)=u$ and $u'(0)=X$. Then $V$ is parallel along $\gamma$ if and only if the horizontal curve $u(t) \in \widehat{V}$ for every $t$. It follows that  if $V$ is parallel, then  $X \in T_u \widehat{V}$, for every $X \in  \H_u $. Now, suppose that the horizontal distribution $\H$ is tangent to $\widehat{V}$, then for every curve $\gamma$, the horizontal lift $u(t)$ starting at $u \in \widehat{V}$ is everywhere tangent to $\widehat{V}$, and hence contained in $\widehat{V}$, proving that $V$ is parallel. 
		
		On the other hand, $\omega_u^{-1}(\{0\} \times \R^{n+1})$ is a horizontal subspace of $T_u \widehat{M}$ of dimension $n+1$, hence  $\omega_u^{-1}(\{0\} \times \R^{n+1}) = \H_u$. Knowing that $\widehat{V}$ is an $H$-principal bundle on $M$, it follows that $\omega$ sends $\widehat{V}$ to $\h \ltimes \R^{n+1}$ if and only if $\H$ is tangent to $\widehat{V}$. 
\end{proof}
\medskip
	
Henceforth, we will assume $V$ a parallel vector field. 
\medskip
  
In a different but related manner,  parallel submanifolds of $\widehat{M}$ are defined in   \cite{Pec} by the fact that $\omega$ send all their tangent spaces to the same subspace of $\mathfrak{po}$. This is the case of $\widehat{V}$. In fact $(\widehat{V}, M, \omega_{\vert \widehat{V}})$ is a Cartan geometry, say,  a sub Cartan geometry of $(\widehat{M}, M, \omega)$ in  a natural sense.

\subsubsection{Curvatures}
The curvature  of  the Cartan  connection $\omega$ is defined by the Cartan-Maurer formula $\Omega=  d \omega  + 1/2 [\omega, \omega]$. Observe that the curvature of $\widehat{V}$ is just the restriction of that of $\widehat{M}$.  
	
Using the parallelism, the $\widehat{M}$ curvature is encoded in a $\O(1, n)$-equivariant map 
$$\kappa: \widehat{M} \to \mathcal W_0 = \mathsf{Hom} (\wedge^2 (\mathfrak{po}/\mathfrak{0}(1, n)), \mathfrak{po}) = \mathsf{Hom} (\wedge^2 \R^{n+1}, \mathfrak{po}).$$
The associated curvature map $\kappa_{\widehat{V}}$ for $\widehat{V}$ is just the restriction of $\kappa$. It is $H$-equivariant and takes     values in $ \mathcal W_0^{\widehat{V}} = \mathsf{Hom} (\wedge^2 \R^{n+1}, \h\ltimes \R^{n+1})$. 
	
Similarly, the parallelism allows one to express the differential $d \kappa$ as a vectorial map $\mathcal D^1 \kappa: \widehat{M} \to \mathsf{Hom}(\mathfrak{po}, \mathcal W_0) = \mathcal W_1$, and in  the same way, we have $\mathcal D^1 \kappa_{\widehat{V}}: \widehat{M} \to \mathsf{Hom}(\h \ltimes \R^{n+1}, \mathcal W_0^{\widehat{V}}) = \mathcal W_1^{\widehat{V}}$. One defines inductively $\mathcal D^i \kappa $, $\mathcal D^i \kappa_{\widehat{V}}$, $\mathcal W_i$ and $\mathcal W_i^{\widehat{V}}$, for $i >1$.

	\bigskip

	To put all these curvatures together, for any $l$, 
	consider $\kappa_l=(\kappa, D^1 \kappa,\ldots,D^l \kappa): \widehat{M} \to \mathcal W_0 \times \mathcal W_1 \times \ldots \times \mathcal W_l  =  \mathcal Y_l$, and   analogously, $\kappa_l^{\widehat{V}}=(\kappa_{\widehat{V}}, D^1 \kappa_{\widehat{V}},\ldots,D^l \kappa_{\widehat{V}}): 
	\widehat{M} \to \mathcal W_0^{\widehat{V}} \times \mathcal W_1^{\widehat{V}} \times \ldots \times \mathcal W_l^{\widehat{V}} = \mathcal Y_l^{\widehat{V}}$. 
	
	\subsubsection{Pseudo-group  (resp. pseudo-algebra) of local (resp. infinitesimal)  isometries}  A local isometry of $(M, g)$ is a triplet 
	$(U_1, U_2, f)$ where $U_1$ and $U_2$ are open in $M$ and $f$ is an isometry $U_1 \to U_2$
	(for the restricted metrics).  Here, we will restrict ourselves to local isometries preserving the 
	vector field $V$. Define a local Killing field as a pair $(U, X)$ where $U$ is open and 
	$X$ is a Killing field defined on $U$. Again, we will assume that $X$ commutes with $V$.

	Let $\G$ be the collection  of all these local isometries, and $\g$  that of all these 
	local Killing fields. 
	Let also $\G^0 \subset \G$ be  the subset of local isometries given by composition of local flows of local Killing fields. It 
	plays the role of the identity component of $\G$. 
	
	It is quite delicate, and this is not our purpose here,  to formulate all these concepts (of pseudo-groups and pseudo-Lie algebras), but let us observe that 
	it is straightforward to define orbits of $\G$ and also of $\G^0$ (see for instance \cite{Hae} and related references for 
	a rigorous treatment of pseudo-groups). One can in particular see that $\G^0$ has 
	connected orbits. One can also use $\G$ to define locally homogeneous spaces by the fact
	that $\G$ has one orbit.
	
	Observe here that $\G$ acts locally on $\widehat{M}$, by preserving $\widehat{V}$, the Cartan connection $\omega$, 
	its restriction  $\omega_{\vert \widetilde{V}} $,  and therefore all the previously defined
	curvature maps.

\subsection{A closed partition $\mathcal P_l$  of $M$} Fix $l$ (to be chosen later, big enough). The $\kappa_l^{\widehat{V}}$-levels determine a partition, say $\widehat{\mathcal P}_l$ into closed subsets. This projects to a partition $\mathcal P_l$ of $M$, but no longer by closed subsets, a priori. If we want the $\mathcal P_l$-parts to be closed in $M$, we have to  be sure that the $H$-saturation of a $\widehat{ \mathcal P}_l $-part is closed in $\widehat{M}$. But this is 
nothing but the $ \kappa_l^{\widehat{V}}$-inverse image of an $H$-orbit in $\mathcal Y_l^{\widehat{V}}$. This is guaranteed by:

\begin{fact} The $H$-orbits in $\mathcal Y_l^{\widehat{V}}$ are closed. 
\end{fact}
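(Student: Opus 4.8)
The plan is to exploit the explicit structure $H \cong \SO(n-1) \ltimes \R^{n-1} = \mathsf{Euc}_{n-1}$ obtained above: $H$ is the semidirect product of the \emph{compact} group $K := \SO(n-1)$ and its \emph{unipotent} (here abelian) radical $U := \R^{n-1}$, consisting of the parabolic elements fixing $e_0$. First I would record that, since $l$ is fixed, $\mathcal Y_l^{\widehat{V}} = \mathcal W_0^{\widehat{V}} \times \cdots \times \mathcal W_l^{\widehat{V}}$ is a \emph{finite-dimensional} real vector space, and that $H$ acts on it by a \emph{real algebraic} linear representation. Indeed, each factor $\mathcal W_i^{\widehat{V}}$ is built from the two $H$-modules $\R^{n+1}$ (the restriction of the standard Lorentz representation) and $\h \ltimes \R^{n+1}$ (on which $H$ acts by the adjoint action) via the standard $\mathsf{Hom}$ and tensorial constructions, all of which produce polynomial, hence algebraic, representations of the algebraic group $H$.

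Granting this, the orbit of a point $y \in \mathcal Y_l^{\widehat{V}}$ factors as
\[
H \cdot y = K \cdot (U \cdot y),
\]
because every element of $H = K \ltimes U$ is uniquely of the form $ku$ with $k \in K$, $u \in U$. The two remaining steps then treat the two factors separately. For the unipotent factor I would invoke the Kostant--Rosenlicht theorem: a unipotent group acting algebraically on an affine variety has closed orbits; applied to the linear action of $U$ on $\mathcal Y_l^{\widehat{V}}$, it gives that $C := U \cdot y$ is closed.

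For the compact factor I would use the elementary fact that if $K$ is compact and $C$ is closed then $K \cdot C$ is closed: if $k_m c_m \to z$ with $k_m \in K$ and $c_m \in C$, then after passing to a subsequence $k_m \to k \in K$, whence $c_m = k_m^{-1}(k_m c_m) \to k^{-1} z$ by continuity of the action, so $k^{-1} z \in C$ and $z = k \cdot (k^{-1} z) \in K \cdot C$. Combining the two steps, $H \cdot y = K \cdot C$ is closed, which is the assertion.

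I expect the only genuine subtlety to be the passage from the algebraic (Zariski) formulation of Kostant--Rosenlicht to closedness in the Hausdorff topology over $\R$: one must check that the relevant orbit is defined over $\R$ and that, for a real unipotent group, the real orbit coincides with the real points of its Zariski closure, so that Zariski-closedness yields closedness in the usual topology. In our setting this obstacle is mild, since $U \cong \R^{n-1}$ is abelian and acts through commuting unipotent (hence polynomial) transformations, so the closedness of its orbits can be established without appealing to the theorem in full generality.
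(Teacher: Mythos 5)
Your proof is correct and follows essentially the same route as the paper's: decompose $H = \SO(n-1) \ltimes \R^{n-1}$, apply Kostant--Rosenlicht to the unipotent factor, and absorb the compact factor by the elementary ``compact times closed is closed'' argument (which the paper leaves implicit in its reduction to showing closedness of the $\R^{n-1}$-orbits). The only stylistic difference is how unipotency/algebraicity of the $\R^{n-1}$-action is justified: you derive algebraicity of the representation from the tensorial construction of the $\mathcal W_i^{\widehat{V}}$, whereas the paper observes that the representation is the restriction of an $\O(1,n)$-representation and invokes semisimplicity of $\O(1,n)$ to conclude that $\R^{n-1}$ acts unipotently.
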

	
\begin{proof} Remember $H $ is a semi-direct product $\SO(n-1) \ltimes \R^{n-1} $. It is enough to show closeness of $N$-orbits, where $N := \R^{n-1}$ is the nil-radical of $H$. Let us now observe that  the $N$-representation in $ \mathcal Y_l^{\widehat{V}}$ is unipotent. This can be  shown explicitly, or  deduced from the fact that this is nothing but the  restriction  to $N$ of the $\O(1, n)$-representation in $\mathcal Y_l$. Now, $N$ is unipotent in $\O(1, n)$ and hence, because $\O(1, n)$ is semisimple, for any representation of $\O(1, n)$, $N$ acts unipotently. 
Finally, we use Kostant-Rosenlicht theorem which says that unipotent groups have closed orbits (\cite[Proposition 4.10]{Borel}). 	
\end{proof}

\subsubsection{$\mathcal P_l$ is somewhere a trivial fibration}

	Let $\widehat{U} $ be the (open) subset of $\widehat{M}$ where $\hat{f}= \kappa_l^{\widehat{V}}$  has a maximal rank.  Consider 
	$\hat{f}(\widehat{M} - \widehat{U})  $ as a subset of   $\mathcal Y_l^{\widehat{V}}$.
	Since $\hat{f}$ is $H$-equivariant and  $M$ is compact, $\hat{f}(\widehat{M} - \widehat{U})  $ can be written as the $H$-saturation of 
	a compact subset $K \subset \mathcal Y_l^{\widehat{V}}$. 
	
	Now $\mathcal Y_l^{\widehat{V}}$ admits a $H$-equivariant stratification $\mathcal Y_l^{\widehat{V}}= Y_0 
	\supset Y_1 \supset  \ldots \supset Y_k$, such  that the $H$-action of each
	$Z_i= Y_i - Y_{i+1}$ defines a fibration.  Let $m$ be the  smallest integer such that 
	$A= \hat{f}(\widehat{U}) \cap Z_m \neq \emptyset$.  In particular $\hat{f}^{-1}(A)$ is open in $ \widehat{U}$. \\
	Let $K^\prime = K \cap Z_m$. 
	This is closed in $Z_m$
	and 
	hence
	its saturation $B= H.K $ is also closed in $Z_m$, since the $H$-orbits determine a fibration. By Sard Theorem, $B$ has 
	a vanishing Hausdorff measure of exponent the maximal rank of $\hat{f}$. In particular $B\cap A$ has empty interior 
	in $A$, and thus $\widehat{U}^\prime = \hat{f}^{-1}(A-B)$ is open,  and is mapped by $f$ into the regular values set of $\hat{f}$. 
	
	Now, saturate everything by the $H$-action; that is, instead of considering levels $\hat{f}^{-1}(z)$, consider 
	inverse images $\hat{f}^{-1}(H z)$. Since $H z $ is closed in $\mathcal Y_l^{\widehat{V}}$, $\hat{f}^{-1}(H z)$ projects onto 
	closed submanifolds in $M$.
	
	Project everything on $M$ and get an open set $U^\prime \subset M$ with a submersion 
	$f: U^\prime \to Z_m/H$. 
	Thus, on  $ {U}^\prime$, we have a partition into closed (in $M$) submanifolds (not necessarily connected) given 
	by the levels of a global submersion.  
	
	Let $\mathcal K$ be the so-defined foliation, i.e.  with  leaves   the connected components of the $f$-levels. 
	Let $\tau$ be a small transversal to a leaf $C$, so that $f_{\vert \tau}$ is injective. Let   $U^{\prime\prime}$ be the saturation of $\tau$ 
	by $\K$.  Now,  $f_{\vert U^{\prime\prime}}$ is a submersion with connected levels.  We already know that these levels are compact. 
	It turns out that this implies  that $f_{\vert U^{\prime\prime}}: U^{\prime\prime}\to f (U^{\prime\prime})$ is a bundle map (see \cite{bundle}).  In conclusion, we have  proved:
	
\begin{fact}  \label{fibration} There is an open subset $B \subset M$ where the $\mathcal P_l$-classes are  submanifolds of $M$, closed in $M$, and form a trivial fibration. 
\end{fact}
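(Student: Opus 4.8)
The plan is to read the partition $\mathcal P_l$ directly off the $H$-equivariant curvature map $\hat f = \kappa_l^{\widehat V}\colon \widehat M \to \mathcal Y_l^{\widehat V}$ and to isolate an open region of $M$ over which the level sets of $\hat f$ behave like the fibers of a submersion. By construction the $\mathcal P_l$-classes are the projections to $M$ of the $H$-saturated levels $\hat f^{-1}(Hz)$; since the $H$-orbits in $\mathcal Y_l^{\widehat V}$ are closed by the previous Fact, each such saturated level is closed and descends to a closed subset of $M$. The whole issue is thus to arrange that these sets are genuine submanifolds assembling into a (trivial) bundle, and for this I would restrict to a well-chosen regular stratum.

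First I would set $\widehat U \subset \widehat M$ to be the open, $H$-invariant locus where $\hat f$ has maximal rank; its complement is closed and $H$-invariant, and because $M$ is compact the image $\hat f(\widehat M - \widehat U)$ is the $H$-saturation $H\cdot K$ of a compact set (take $K = \hat f(D)$ for a compact $D \subset \widehat M - \widehat U$ meeting every relevant fiber of $\widehat V \to M$). Next I would use that, $\mathcal Y_l^{\widehat V}$ being a linear algebraic representation of $H = \SO(n-1) \ltimes \R^{n-1}$ acting with closed orbits, it admits an $H$-invariant stratification $\mathcal Y_l^{\widehat V} = Y_0 \supset \cdots \supset Y_k$ whose successive differences $Z_i = Y_i - Y_{i+1}$ are $H$-invariant and on which the action is regular, so that $Z_i \to Z_i/H$ is a fiber bundle. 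Taking $m$ minimal with $A := \hat f(\widehat U) \cap Z_m \neq \emptyset$ singles out a stratum that maximal-rank points genuinely reach, and $\hat f^{-1}(A)$ is then open in $\widehat U$.

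Then I would discard the bad locus. Since $\hat f$ has constant maximal rank on $\widehat U$, a Sard-type argument shows that the closed set $B := H\cdot(K\cap Z_m)$ has vanishing Hausdorff measure of exponent equal to that rank, hence empty interior in $A$; so $A - B$ is open, $\hat f^{-1}(A-B)$ consists of maximal-rank points mapping to regular values, and saturating by $H$ and projecting to $M$ yields an open set $U' \subset M$ together with a submersion $f\colon U' \to Z_m/H$ whose fibers are exactly the closed $\mathcal P_l$-submanifolds. Writing $\mathcal K$ for the foliation by their connected components, I would finally fix a small transversal $\tau$ on which $f$ is injective and let $U''$ be its $\mathcal K$-saturation, so that $f|_{U''}$ is a submersion with connected and --- crucially --- compact fibers onto the small, contractible base $f(U'') \cong \tau$.

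The main obstacle is exactly this last compactness input: once $f|_{U''}$ is seen to be a proper submersion with connected fibers over a contractible base, Ehresmann's theorem (in the form cited through \cite{bundle}) upgrades it to a trivial bundle, which is the desired conclusion. Everything rests on two pillars established earlier: the closedness of the $H$-orbits, used both to descend levels to closed submanifolds of $M$ and to produce the regular stratification, and the compactness of $M$, used to make the fibers compact and the submersion proper. The Sard step is what guarantees we work over honest regular values, so the level sets are clean submanifolds rather than singular sets, and shrinking $\tau$ to a disk is what promotes local triviality to genuine triviality.
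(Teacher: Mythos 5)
Your proposal follows essentially the same route as the paper's own argument: the maximal-rank locus $\widehat{U}$, the compactness argument giving $\hat f(\widehat M - \widehat U) = H\cdot K$, the $H$-equivariant stratification and the minimal stratum $Z_m$, the Sard-type removal of $H\cdot(K\cap Z_m)$, the descent to a submersion $f\colon U' \to Z_m/H$ with closed fibers, and finally the transversal $\tau$ with the proper-submersion (Ehresmann) conclusion. The steps, their order, and the key inputs (closedness of $H$-orbits, compactness of $M$) all coincide with the paper's proof, so this is a correct rendering of the same argument.
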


\subsection{Reduction} Let us now take  $ l =  \dim \O(1, n)$ and denote $\mathcal P_l$ simply by $\mathcal P$ (actually it is enough to take $l = \dim H = \dim \SO(n-1) + (n-1)$). From \cite{Pec}, we have:
	
\begin{proposition} \cite{Pec}  \label{integrability} On the open set $B \subset M$  defined in Fact \ref{fibration}, the $\G^0$-orbits coincide with the fibers. In particular each fiber is locally homogeneous (and closed in $M$).  
Furthermore, a finite index sub-pseudo-group of $\G$ preserves $B$ and has  the same orbits as $\G^0$.
\end{proposition}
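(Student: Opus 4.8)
The plan is to prove the two inclusions between the $\G^0$-orbits and the fibers of $\mathcal P$ over $B$ separately. The inclusion of $\G^0$-orbits into fibers is formal: every element of $\G$ is a local isometry commuting with $V$, so it lifts to a local diffeomorphism of $\widehat M$ preserving $\widehat V$ and the Cartan connection $\omega$, hence it intertwines the curvature map $\kappa$ and all its covariant derivatives $\mathcal D^i\kappa$, and thus preserves every level set of $\kappa_l^{\widehat V}$. Projecting to $M$, each element of $\G$ preserves the partition $\mathcal P$; in particular the connected $\G^0$-orbits lie in single $\mathcal P$-classes, i.e. in the (connected) fibers over $B$. The reverse inclusion is the substance of the statement, and is where the regularity of $B$ and the choice $l=\dim\O(1,n)$ enter.

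For the reverse inclusion I would run the integrability argument for Killing generators. At a frame $\widehat x$ over $B$, read through the parallelism $\omega$ the decreasing sequence of subspaces $\mathfrak s^i(\widehat x)\subset\mathfrak{po}$ of elements annihilating the curvature jet of order $i$ (equivalently lying in $\ker d\kappa_i^{\widehat V}$). The decisive algebraic input is that the isotropy parts of the $\mathfrak s^i$ form a decreasing chain inside $\mathfrak o(1,n)$ (in fact inside $\mathfrak h$), so the chain stabilises after at most $\dim\O(1,n)$ steps (whence $\dim H$ already suffices); this is exactly why $\kappa_l^{\widehat V}$ with $l=\dim\O(1,n)$ already detects all obstructions, i.e. $\mathfrak s^l=\mathfrak s^{l+1}=\cdots$. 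On $B$, where $\kappa_l^{\widehat V}$ has locally constant rank, this stabilisation combined with a Frobenius argument — using $\Omega=d\omega+\tfrac12[\omega,\omega]$ and the differential Bianchi identities to see that the distribution $\mathfrak s^l$ closes up — shows that each $Y\in\mathfrak s^l(\widehat x)$ is the value at $\widehat x$ of a genuine local Killing field commuting with $V$, obtained by integrating the $\omega$-constant field it determines and checking that its flow preserves $\omega$. Since $d\pi$ carries the horizontal part of $\mathfrak s^l(\widehat x)$ onto the tangent space of the fiber through $\pi(\widehat x)$, these Killing fields span that tangent space; running the construction at every point of the fiber (all regular, as $B$ is the maximal-rank locus) shows the $\G^0$-orbit is open at each of its points, hence open in the connected fiber, and — the $\G^0$-orbits being disjoint open sets partitioning the fiber — also closed, so it equals the fiber. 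This yields both the identification of orbits with fibers and the local homogeneity of each fiber.

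It remains to treat the full pseudo-group $\G$. As noted, $\G$ preserves the maximal-rank locus and the partition $\mathcal P$, hence the fibration on $B$, but it may permute the finitely many pieces of the $H$-stratification of $\mathcal Y_l^{\widehat V}$ and the finitely many connected components occurring in a level set, and it need not preserve the particular trivialising set $B$. Passing to the sub-pseudo-group that preserves $B$ and fixes each of these finitely many components yields a finite-index subgroup; on $B$ any of its elements preserves each connected fiber and, acting within it, has the same orbits as $\G^0$ by the local homogeneity just established.

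The hard part will be the integrability step of the second paragraph: converting the vanishing of a finite-order curvature jet into an actual local Killing field. This requires the representation-theoretic stabilisation bound (to license the finite order $l=\dim\O(1,n)$) together with the Frobenius/Bianchi computation on $\widehat M$, which is valid only where the rank is locally constant. This is precisely why one must restrict to the open set $B$ and cannot expect genuine local homogeneity on all of $M$: the maximal-rank hypothesis plays here the role that analyticity plays elsewhere in Gromov's theory.
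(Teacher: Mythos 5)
The paper does not prove this proposition: it is quoted verbatim from the reference \cite{Pec}, and the text passes directly to the construction of the core. So there is no in-paper argument to compare yours against; what can be said is how your reconstruction relates to the standard proof in that reference.

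Your outline is structurally the right one, and it is essentially the argument of \cite{Pec} (and of Gromov's integrability theorem in its Cartan-connection form): the easy inclusion of $\G^0$-orbits into $\mathcal P$-classes because local isometries intertwine $\kappa_l^{\widehat V}$; the decreasing chain of Killing-generator spaces $\mathfrak s^i(\widehat x)$ whose strict drops occur in the isotropy part, giving stabilisation after $\dim H$ steps (which matches the paper's parenthetical remark that $l=\dim H$ suffices); the openness-plus-closedness argument inside a connected fiber; and the finite-index reduction for $\G$. The one caveat is that the pivotal step --- showing that a stabilised Killing generator at a point of locally constant rank integrates to an actual local Killing field commuting with $V$ --- is only gestured at via ``Frobenius plus Bianchi,'' and you say yourself that this is the hard part. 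That step is precisely the content of the cited theorem (one must build a distribution on $\widehat M\times \mathsf{Poi}_{1,n}$, or use the exponential map of the Cartan connection, and verify that the resulting local flow genuinely preserves $\omega$ and $\widehat V$, not merely the curvature jets), so as a self-contained proof your text is incomplete at exactly the point where the real work lies. Everything surrounding that step is correct, and since the authors themselves outsource the whole proposition to \cite{Pec}, your reconstruction is a faithful account of what is being invoked rather than a divergence from the paper.
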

	
\subsubsection{Existence of a core $N$, Proof of Theorem \ref{reduction}}\label{Par. Existence of a core N, Proof of Theorem 1.3}  If some fiber in Proposition \ref{integrability} is somewhere non-degenerate, i.e. the restriction of the metric to it is of Lorentzian type, then it is everywhere non-degenerate by local-homogeneity. Take $N$ to be this fiber. If all fibers are degenerate, then consider a transversal (to the fibration) curve $c$, and let  $N$  be its saturation (by the fibration).  So $N$ is a Lorentzian manifold with boundary, diffeomorphic to a product $F \times [0, 1]$, where $F$ is closed in $M$. The  local isometry  pseudo-group $\G^0$  preserves and acts transitively on  the lightlike geodesic factors $F\times \{u\}$.

\section{Further results on the core $N$}\label{Further results}

Our goal is to study the $V$-dynamics on $M$, by first replacing $M$ by $M^*$ (as introduced in Par. \ref{Subsection: M*}), and then replacing $M$ (which is in fact $M^*$) by   its core $N$. This section is devoted to justifying these reductions and proving the    global algebraic structure of  the core $N$ stated in the second part of Theorem \ref{reduction}.

\subsection{Passing  to $M^*$ and getting  a unipotent isotropy}  \label{replace_M*} Consider $M^*$ the Stiefel manifold of orthonormal frames of $V^ \perp/ \R V$, as described in Section   \ref{Subsection: M*}.  We can do for it the same analysis as that for $M$ from the point of view of its pseudo-Lie algebra of local Killing fields. We will however here just lift the $\g$ -action to it (where $\g$ is the pseudo-Lie algebra of local Killing fields of $M$).

\begin{fact} \label{lift_M^*} Let $B^*$ be the inverse image of $B$  (defined in Proposition \ref{integrability}) in $M^*$ endowed with the local $\G^0$-action. Then 
	the $\G^0$ orbits in $B^*$ are closed submanifolds in $M^*$, and so they form a fibration.
\end{fact}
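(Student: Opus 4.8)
The plan is to carry over to $M^*$ the analysis of Section \ref{Cartan_Connection}, the decisive gain being that the isotropy of the lifted action becomes unipotent, so that closedness of the orbits follows from Kostant--Rosenlicht without any Sard-type genericity. First I would make the lift precise: every local isometry in $\G^0$ induces a bundle automorphism of $\pi\colon M^*\to M$ commuting with the $\SO(n-1)$-action and covering the given action on $M$, and dually the pseudo-algebra $\g$ of local Killing fields lifts to vector fields on $M^*$ projecting onto $\g$ under $d\pi$. The point of this lift, recorded in Corollary \ref{M*: corollary for unipotent isotropy} and the surrounding discussion, is that the isotropy of the $\G^0$-action on $M^*$ is unipotent: at a frame $r$ over $x$ the linear isotropy $I_x\subset\SO(n-1)\ltimes\R^{n-1}$ acts on the fibre $\pi^{-1}(x)$ through its $\SO(n-1)$-part, so the stabiliser of $r$ is exactly the nilpotent part $I_x\cap\R^{n-1}$. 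Thus passing from $M$ to $M^*$ trades the general linear isotropy for its unipotent part.

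Next, I would set $B^*:=\pi^{-1}(B)$, with $B$ as in Proposition \ref{integrability}. Since $\pi$ is $\G^0$-equivariant, each $\G^0$-orbit of $B^*$ projects onto a $\G^0$-orbit of $B$, which by Proposition \ref{integrability} is one of the fibres $C$ of the fibration there, in particular a closed submanifold of $M$. Hence every orbit in $B^*$ is contained in the closed $\SO(n-1)$-subbundle $C^*:=\pi^{-1}(C)$ of $M^*$, and it suffices to prove closedness inside $C^*$.

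The crux is this closedness, and here I would repeat on $M^*$ the Cartan construction of Section \ref{Cartan_Connection}: the Levi-Civita connection of $g^*$ gives an equivariant curvature map whose levels, saturated by the structure group, cut out the $\G^0$-orbits, following \cite{Pec, Fra}. Closedness of these orbits reduces, exactly as in the Fact that the $H$-orbits in $\mathcal{Y}_l^{\widehat{V}}$ are closed, to closedness of the relevant structure-group orbits in the curvature space; and since on $M^*$ the isotropy is unipotent, the controlling group acts unipotently, so Kostant--Rosenlicht (\cite[Proposition 4.10]{Borel}) yields closed orbits. In contrast with Fact \ref{fibration}, no passage to a Sard-generic open set is required, because the unipotency holds throughout $B^*$.

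Finally, the orbits so obtained have constant dimension, are closed, and are permuted compatibly with the $\SO(n-1)$-action; being the connected levels of a submersion onto the orbit space they form, by the same local-triviality argument used for Fact \ref{fibration}, a fibration of $B^*$. I expect the closedness step to be the main obstacle: a priori an orbit could accumulate on a strictly larger sub-bundle of $C^*$, its closure being a genuine enlargement, and it is precisely the reduction of the isotropy to a unipotent group on $M^*$ that rules this out via Kostant--Rosenlicht.
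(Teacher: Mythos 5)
Your reduction to closedness inside $C^*=\pi^{-1}(C)$ is fine, but the step that is supposed to deliver closedness does not engage with the actual danger, and the tool you invoke is applied to the wrong group. The orbit $\G^0\cdot x^*$ maps onto the closed fibre $C=\G^0\cdot x$, and its fibre over $x$ is the orbit of the frame $x^*$ under the isotropy $I_x\subset \SO(n-1)\ltimes\R^{n-1}$ acting on $\pi^{-1}(x)$. Since $\R^{n-1}$ acts trivially on $V^\perp/\R V$, this fibre is a coset of the \emph{image of $I_x$ in $\SO(n-1)$} -- a subgroup of a compact group, hence never unipotent -- and the issue is whether that image is closed: a priori it could be a dense winding of a torus in $\SO(n-1)$, in which case the orbit upstairs would not be closed. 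Kostant--Rosenlicht applied to the unipotent stabiliser $I_x\cap\R^{n-1}$ of the frame says nothing about this. What the paper actually uses is algebraicity of $I_x$ (from Gromov's theory, or its Cartan-connection variant), which forces $I_x$ to have, up to finite index, the form $K\ltimes\R^k$ with $K$ a \emph{closed} subgroup of $\SO(n-1)$; then $\G^0\cdot x^*$ fibres over the closed orbit $\G^0\cdot x$ with compact fibre $K$, and closedness follows at once. The unipotency of the isotropy on $M^*$ is the \emph{payoff} of passing to $M^*$, recorded separately for later use; it is not the mechanism that proves this Fact.

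Your alternative route of rerunning the whole Cartan construction on $M^*$ also has a gap: the curvature-level partition of $M^*$ coincides with pseudo-group orbits only on a maximal-rank open set of $M^*$ produced by the Sard/stratification argument (which has nothing to do with unipotency -- it controls the rank of the curvature map, not the isotropy), and that open set need not contain $\pi^{-1}(B)$; moreover the relevant pseudo-group for $(M^*,g^*,V^*)$ is strictly larger than the lift of $\G^0$ (it contains the $\SO(n-1)$-action), so its orbit partition is not the one you want. The direct fibration argument with algebraic isotropy avoids all of this.
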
 
\begin{proof}   For $x \in M$,  let $\G_x^0$
	denote  its stabilizer  in $\G^0$. Its (faithful) representation in 
	$T_xM$ makes it a subgroup  $I_x$  of $H = \SO(n-1) \ltimes \R^{n-1}$.  A standard fact from Gromov's rigid transformation
	groups  theory \cite{Gro},  or  its Cartan connections variant states that $I_x$ is algebraic. From this, one infers (see \cite[Theorem 4.3 of Chapter VIII]{hochschild}) that, up to a finite index, 
	$I_x$ has the form $K \ltimes \R^k$, where $K $ is a closed subgroup of $\SO(n-1)$.  Next, one can check that if $x^* \in M^*$
	projects on $x$, then the $\G^0. x^*$  fibers over $\G^0. x$ with fiber $K$. This implies in particular that 
	the $\G^0$ orbits on $M^*$ are closed. 
\end{proof}

The interest of working on $M^* $ lies in the following fact brought up in  Par. \ref{Subsection: M*}:

\begin{fact}   The $\G^0$-action on $M^*$  has unipotent isotropy.
	
\end{fact}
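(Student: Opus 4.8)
The plan is to read the isotropy off the structure of $H$ recorded earlier, and to observe that passing from $M$ to the frame bundle $M^*$ of $E = V^{\perp}/V$ trivializes precisely the $\SO(n-1)$-factor of the linear isotropy representation while leaving its unipotent radical intact. First I would describe how $H = \SO(n-1)\ltimes\R^{n-1}$ acts on a fibre $E_x = V^{\perp}_x/\R V_x$. From the matrix form of $H$ in the adapted Cartan structure, an element $h \in H$ with rotational part $\alpha \in \SO(n-1)$ and translational part $\lambda \in \R^{n-1}$ sends $e_i \mapsto \lambda_i e_0 + \sum_k \alpha_{ki} e_k$ for $1 \le i \le n-1$, where $e_0 = V$. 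Reducing modulo $V$, the induced action on $E_x$ is $[e_i] \mapsto \sum_k \alpha_{ki}[e_k]$; in particular the nil-radical $N = \R^{n-1}$ acts trivially on $E_x$, and the $H$-action on $E_x$ is exactly the projection $p : H \to \SO(n-1)$, $h \mapsto \alpha$, whose kernel is $N$.

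Next I would pin down the isotropy on $M^*$. By Fact \ref{lift_M^*} the local isotropy $\G^0_x$ at $x$ is faithfully represented as a subgroup $I_x \subset H$ via $g \mapsto d_x g$, and the lift of such a $g$ to $M^*$ acts on the fibre $\pi^{-1}(x)$---the orthonormal frames of $E_x$---through the induced linear map on $E_x$, i.e. through $p(d_x g) \in \SO(n-1)$. Hence $g$ stabilizes a frame $x^* \in \pi^{-1}(x)$ if and only if $p(d_x g)$ fixes the orthonormal basis $x^*$ of $E_x$. Since a linear map fixing a basis is the identity---equivalently $\SO(n-1)$ acts freely on orthonormal frames---this forces $p(d_x g) = I$, so that $d_x g \in N$.

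Combining the two steps, the faithfulness of $\G^0_x \hookrightarrow H$ identifies the stabilizer $\G^0_{x^*}$ with a subgroup of $I_x \cap N \subseteq N$. As $N \cong \R^{n-1}$ consists of unipotent elements, $\G^0_{x^*}$ is unipotent, which is the assertion. There is no genuine obstacle here: the one substantive point is the clean observation that the $M^*$-construction kills exactly the rotational $\SO(n-1)$-part of the isotropy and retains its unipotent radical; everything else is bookkeeping with the form of $H$ from the adapted Cartan structure together with the freeness of the rotation action on orthonormal frames.
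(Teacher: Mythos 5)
Your proof is correct and follows essentially the same route as the paper's: both rest on the observation that the linear isotropy at $x$ lies in $H=\SO(n-1)\ltimes\R^{n-1}$ and that fixing a frame of $E_x=V_x^{\perp}/\R V_x$ forces the $\SO(n-1)$-component to be trivial, leaving only the unipotent $\R^{n-1}$-factor. The paper compresses this mechanism into a citation of the corollary that lifted isometries act freely on $M^*$, whereas you unpack it directly from the matrix form of $H$; the content is the same.
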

\begin{proof}
	The elements of the orthogonal group preserving a lightlike vector are given by $\SO(n-1) \ltimes \R^{n-1}$, and it follows from Corollary \ref{Corollary for unipotent isotropy} that the isotropy group of the $\G^0$-action on $M^*$ lies in the $\R^{n-1}$ factor, which is unipotent in $\O(1,n)$.
\end{proof}

\subsection{Equicontinuity on $N$ implies equicontinuity on $M$} \label{replace_N}

\begin{fact}  \label{criteria1} If the flow of $V$ acts equicontinuously on the core  $N$, then it acts equicontinuously on $M$
	
\end{fact}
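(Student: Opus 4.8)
The plan is to reinterpret equicontinuity as relative compactness in the isometry group and to propagate it from $N$ to $M$ through the restriction homomorphism, its injectivity coming from the free action on $M^*$ (recall that throughout this part $M$ stands for $M^*$) and its properness from the trivial fibration of an open set of $M$ by copies of $N$. First I would record that the flow $\phi^t$ preserves $N$: the core, or each of its lightlike fibers in the degenerate case, is $V$-saturated and $\G^0$-homogeneous, so $V$ is tangent to $N$, and moreover $N$ is invariant under a finite index subgroup of $\Iso(M,g,V)$ which contains the connected group $\{\phi^t\}$. Writing $A := \overline{\{\phi^t\}} \subset \Iso(M,g)$ and $A_N := \overline{\{\phi^t_{|N}\}}$ for the two closures, equicontinuity on $M$ (resp. on $N$) is by definition the compactness of $A$ (resp. of $A_N$), so the hypothesis is exactly that $A_N$ is compact.

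Since $N$ is closed and the action is continuous, every element of $A$ maps $N$ onto $N$, so restriction defines a continuous homomorphism $\rho : A \to \Iso(N)$ with $\rho(A) \subseteq A_N$. On $M^*$ the isometries act freely (Corollary \ref{M*: corollary for unipotent isotropy}), hence an element $a \in A$ with $a_{|N} = \mathrm{id}$ fixes every point of the nonempty set $N$ and must equal the identity; thus $\rho$ is injective. Injectivity alone is not enough, however, for an $\R$-factor of the abelian group $A$ could a priori inject onto a dense line of the compact torus $\overline{\rho(A)}$; I therefore need to upgrade $\rho$ to a proper map, after which $A = \rho^{-1}(A_N)$ will be compact.

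To obtain properness I would exploit the trivial fibration $U \cong N \times P$ of an open subset of $M$ by submanifolds like $N$ (last assertion of Theorem \ref{reduction}), which the flow preserves leafwise: $\{\phi^t\} \subset \G^0$ and, by Proposition \ref{integrability}, the $\G^0$-orbits are exactly the fibers, so $\phi^t$ acts trivially on the base $P$. Local homogeneity realises all fibers as isomorphic copies of $N$ on which $V$ induces conjugate flows; consequently a subsequence $\phi^{t_{n_k}}_{|N}$ converging in $\Iso(N)$ should force $\phi^{t_{n_k}}$ to converge uniformly over the fibers of $U$, while the transverse motion stays trivial. This would give convergence of $\phi^{t_{n_k}}$ on $U$, and then on all of $M$, since compactness of $M$ lets finitely many $\Iso(M,g,V)$-translates of $U$ cover it. Thus every sequence in $A$ has a convergent subsequence and $A$ is compact, i.e. the flow is equicontinuous on $M$.

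The main obstacle is precisely this properness step: converting equicontinuity on the single fiber $N$ into uniform equicontinuity across the whole fibered neighbourhood, which amounts to a uniform control of the transverse derivatives of $\phi^t$ together with a uniform conjugacy between the fibers coming from local homogeneity. The transverse Riemannian structure carried by the $1$-dimensional foliation $\mathcal V$, combined with the triviality of the fibration, are what I would rely on to rule out the irrational-line degeneration and to make this uniform control effective.
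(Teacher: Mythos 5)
There is a genuine gap, and you have located it yourself: the ``properness step'' that you defer to the end is not a technical detail but the entire content of the statement. Reducing equicontinuity on $M$ to compactness of $A=\overline{\{\phi^t\}}$ and injectivity of the restriction $\rho:A\to\Iso(N)$ is fine (and the injectivity via the free action on $M^*$ from Corollary \ref{M*: corollary for unipotent isotropy} is a nice observation), but injectivity into a compact group says nothing, as you note, and the argument you sketch for properness does not go through. Local homogeneity gives only \emph{local} isometries between nearby fibers of the trivial fibration; it does not provide a global conjugacy of the fibers intertwining the induced flows, let alone one that is uniform in the transverse parameter, so convergence of $\phi^{t_{n_k}}|_N$ does not ``force'' convergence on the neighbouring fibers. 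Moreover, your final step --- covering $M$ by finitely many $\Iso(M,g,V)$-translates of the fibered open set $U$ --- is unjustified: Theorem \ref{reduction} produces \emph{one} open set trivially fibered by copies of $N$, with no claim that its isometric translates cover $M$. So the argument establishes nothing beyond the hypothesis.

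The missing idea, which is how the paper proves the Fact, is to work at the level of the tangent bundle and use that an isometric flow preserves the Levi-Civita connection. Equicontinuity of $\phi^t$ on $N$ yields a $\phi^t$-invariant Riemannian metric $h$ on $TN$; since $N$ is a \emph{Lorentzian} submanifold, the normal bundle $T^\perp N$ is spacelike, so $h'=h\oplus g|_{T^\perp N}$ is a $\phi^t$-invariant Riemannian metric on $TM$ along the closed invariant set $N$. One then invokes the general principle that a flow preserving an affine connection which is equicontinuous along a closed invariant subset is equicontinuous everywhere (see \cite{Ze1}); the mechanism is that $\phi^t$ commutes with the exponential map, so control of $d\phi^t$ over $N$ propagates to a uniform neighbourhood and from there to all of the compact $M$. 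This is exactly the uniform transverse control you were missing, and it bypasses the fibration entirely --- note in particular that the paper's proof works for \emph{any} isometric flow preserving a closed Lorentzian submanifold, with no appeal to homogeneity of $N$.
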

\begin{proof} The fact is  actually true for any isometric flow $\phi^t$ acting on a compact Lorentzian manifold $(M, g)$:  if 
	it preserves  a closed  Lorentzian submanifold $N$ and acts equicontinuously on it, then it acts equicontinuously on $M$. Indeed, 
	the hypothesis means that $\phi^t$ preserves a Riemannian metric $h$ on $TN$. Construct a Riemannian metric 
	$h^\prime$ on $TM$ along $N$  given as $h^\prime = h \oplus g_{T^\perp N}$. Observe here that we used the fact that $g$ is spacelike on $T^\perp N$. Now, it is known that for flows preserving affine connections,   equicontinuity along a closed invariant subset  (not necessarily a submanifold) implies
	everywhere equicontinuity (see for instance \cite{Ze1}).
\end{proof}

\begin{fact}  \label{criteria2} In the local co-homogeneity one case,  if the flow of $V$ acts equicontinuously on a fiber of $N$, then it acts 
	equicontinuously on $N $ (and hence on $M)$.
	
\end{fact}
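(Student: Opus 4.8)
The plan is to reduce equicontinuity to the boundedness of a single derivative cocycle and to exploit the fact that, for a \emph{parallel} null field, this cocycle is confined to the stabilizer of a null line. In the degenerate case of Theorem~\ref{reduction}, $N$ is diffeomorphic to $F\times[0,1]$, the fibers $F_u=F\times\{u\}$ being the lightlike totally geodesic leaves of the parallel distribution $V^{\perp}$ inside $N$; each $F_u$ is closed in $N$ (hence in $M$), is $\phi^t$-invariant, and $\phi^t$ acts inside it. Since $V$ is parallel, any flow-invariant field $W$ satisfies $\mathcal{L}_V W=[V,W]=0$, whence $\nabla_V W=\nabla_W V=0$; thus $W$ is parallel along the $V$-orbits, i.e.\ $d\phi^t$ is nothing but parallel transport along the flow lines (the computation already used to lift $V$ to $M^{*}$). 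Consequently $d\phi^t$ takes values in the subgroup $H\cong\SO(n-1)\ltimes\R^{n-1}$ fixing $V$ inside $\O(1,n)$, described explicitly in Section~\ref{Cartan_Connection}, and the flow is equicontinuous on an invariant set exactly when this cocycle remains in a compact part of $H$ over that set.

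First I would unwind the hypothesis. Equicontinuity of $\phi^t$ on the intrinsically degenerate fiber $F_{u_0}$ means precisely that $d\phi^t|_{TF_{u_0}}$ stays bounded, where $TF_{u_0}=V^{\perp}$. In the matrix form of $H$ recalled in Section~\ref{Cartan_Connection}, this restriction is the top-left block $\left(\begin{smallmatrix}1&\lambda\\0&\alpha\end{smallmatrix}\right)$ with $\alpha\in\SO(n-1)$ and $\lambda\in\R^{n-1}$; its boundedness forces the translation part $\lambda$ to remain bounded, the rotational part $\alpha$ lying automatically in the compact group $\SO(n-1)$.

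The hard part, and the reason this statement is not a direct corollary of Fact~\ref{criteria1}, is that $F_{u_0}$ is lightlike, so there is no spacelike normal bundle inside $N$ along which to extend the invariant metric as in the proof of Fact~\ref{criteria1}. This obstacle is removed by the null-stabilizer structure above: the remaining entries of a matrix of $H$ (its action on the transverse null direction) are polynomial in $\lambda$ and $\alpha$ alone. Hence the boundedness of $\lambda$ already forces the full ambient cocycle $d\phi^t$ on $TN|_{F_{u_0}}$ to stay bounded. In other words, intrinsic equicontinuity on the degenerate fiber automatically upgrades to boundedness of the ambient derivative along $F_{u_0}$.

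Finally I would propagate from the fiber to all of $N$. At this point $\phi^t$ preserves the Levi-Civita connection of $g|_N$ and its ambient derivative cocycle is bounded along the closed invariant subset $F_{u_0}$; by the result invoked in Fact~\ref{criteria1} (for a flow preserving an affine connection, equicontinuity along a closed invariant subset implies equicontinuity everywhere, \cite{Ze1}), $\phi^t$ is equicontinuous on the connected manifold $N$. Since $N$ is a closed Lorentzian submanifold of $M$, Fact~\ref{criteria1} then upgrades this to equicontinuity on $M$, which completes the argument.
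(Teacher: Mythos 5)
Your argument is correct and follows essentially the same route as the paper: both proofs rest on the observation that an isometric cocycle fixing the null vector $V$ and acting boundedly on the degenerate hyperplane $V^{\perp}=TF$ is automatically bounded on the whole tangent space, and then both conclude by the affine-connection rigidity result (equicontinuity along a closed invariant set propagates to all of $N$, then to $M$ via Fact \ref{criteria1}). The only difference is presentational: you read this off from the matrix form of the stabilizer $H\cong\SO(n-1)\ltimes\R^{n-1}$ (the $*$-entries being determined by $\lambda$ and $\alpha$), whereas the paper constructs the invariant second null field $U$ from the preserved spacelike complement $E$ -- two phrasings of the same fact.
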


\begin{proof} Let $F$ be such a fiber. If $\phi^t$ preserves a Riemannian metric $h$, then it preserves 
	$E$ the orthogonal of $V$ with respect to $h$. This $E$ is spacelike (with respect to $g$). 
	Consider $E^\perp$, its orthogonal with respect to $g$.  It has dimension 2 and contains 
	$V$. There is a well defined lightlike vector field  $U$  in $E^\perp$ such that $g(U, V) = 1$. Since 
	$\phi^t$ preserves $V$, it preserves $U$ too.  Thus, $\phi^t$ acts equicontinuously on 
	$TN_{\vert F}$. As in the previous proof,  this implies that $\phi^t$ is equicontinuous on $N$.
\end{proof}

\subsection{Completeness  in  the universal cover $\widetilde{M}$} \label{global_action}

\subsubsection{Summarizing up}\label{summarizing}
Let  $(M,g,V)$ be a compact Brinkmann spacetime. In Par. \ref{Subsection: M*}, we associated to it a natural Brinkmann spacetime $(M^*,g^*,V^*)$ with a Lorentzian submersion $M^* \to M$, such that $V^*$ is the lift of $V$,  which we proved to be horizontal. Given the horizontality of  $V^*$, it is clear that the equicontinuity of $V^*$ implies the equicontinuity of $V$. Thus, we can reduce the proof of equicontinuity to that of $V^*$. 

On the other hand, for any Brinkmann spacetime, we constructed in Section \ref{Cartan_Connection} a core $N$ (contained in some open subset $B \subset M$), whose properties are outlined in the first part of Theorem \ref{reduction}. Using the fact that $N$ is a closed $V$-invariant submanifold of $M$, we proved in the previous paragraph that equicontinuity on $N$ implies equicontinuity on $M$. Next, we lifted $B$ to $M^*$, and lifted the $\mathcal{G}^0$-action to it. 
By doing so, we obtained in Fact \ref{lift_M^*} a $V^*$-invariant closed submanifold $N^*$ of $M^*$, which is either locally homogeneous or has local cohomogeneity one. In the local cohomogeneity one case, $N^*$ is diffeomorphic to a trivial fibration, whose fibers are lightlike, tangent to $V^*$, locally homogeneous and closed in $M^*$ 
(see Par. \ref{Par. Existence of a core N, Proof of Theorem 1.3}). Based on all these facts, it follows that the equicontinuity of $V^*$ on $N^*$ implies the equicontinuity of $V$ on $M$.
\medskip

All these developments justify replacing $M$ by $M^*$, and then $M^*$ by the core $N^*$. So, 
henceforth,  we will assume  that our initial    Brinkmann spacetime $(M, g, V)$  is such that:

\medskip

$(i)$ Either $M$ is locally homogeneous, or  has a  local co-homogeneity one type. In the local co-homogeneity one case, the leaves of $\F$ are assumed to be locally homogeneous and compact.

$(ii)$ The isotropy is unipotent.
\medskip

\noindent We have the following algebraic description:

\begin{proposition}   \label{algebraic.description}  Let $(M,g,V)$ be a compact Brinkmann spacetime satisfying property (i) above. Let $G$ be the identity component of the isometry group 
	$\Iso(\tilde{M}, \tilde{g}, \tilde{V})$ of the universal cover, endowed with the lift of the Brinkmann 
	structure. Then, 
	either $M$ has the form $ \Gamma \setminus G /I$, or it is a trivial fibration $ M \to [0, 1]$, 
	and each fiber has the form $ \Gamma \setminus G / I_u$, where $I_u$ depends on the parameter $u \in [0, 1]$.
\end{proposition}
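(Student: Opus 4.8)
The plan is to lift the whole picture to the universal cover $\tilde M$ and to upgrade the local homogeneity of Proposition \ref{integrability} to genuine transitivity of the connected group $G$. First I would record that, by Theorem \ref{completeness}, $M$ is geodesically complete, and completeness passes to $\tilde M$. Since $\tilde M$ is simply connected, complete, and---after the reductions summarized in \ref{summarizing}---locally homogeneous for the Brinkmann structure, every local Killing field commuting with $V$ extends to a complete global Killing field of $(\tilde M, \tilde g, \tilde V)$. Concretely, the pseudo-algebra $\g$ of Section \ref{Cartan_Connection} integrates to the Lie group $G = \Iso^0(\tilde M, \tilde g, \tilde V)$, and the local $\G^0$-orbits become honest $G$-orbits. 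I would phrase this \emph{globalization} in the Cartan-geometric language of Section \ref{Cartan_Connection}: constancy of the curvature maps $\kappa_l^{\widehat V}$ along a $\mathcal P$-class encodes local homogeneity, and completeness of $\omega$ furnishes the extension (following \cite{Pec, Fra}).

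Next I would separate the two cases of Theorem \ref{reduction}. In the locally homogeneous case $G$ acts transitively on $\tilde M$ (equivalently on the lift $\tilde N$); fixing a basepoint $\tilde x$ yields $\tilde M = G/I$ with $I = G_{\tilde x}$, which by \ref{summarizing} is unipotent, hence closed and connected. In the local co-homogeneity one case the $G$-orbits are precisely the lifted fibers $\tilde F_u$; each is $G$-homogeneous, so $\tilde F_u = G/I_u$, and $\tilde N$ is the trivial fibration of these orbits over an interval, with the isotropy $I_u$ allowed to vary with the parameter $u$. Then I would bring in the deck group $\Gamma = \pi_1(M)$, a discrete subgroup of $\Iso(\tilde M, \tilde g, \tilde V)$ acting properly discontinuously and freely, and cocompactly because $M$ is compact. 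The unipotency of the isotropy enters decisively here: a unipotent group is connected, so the full point-stabilizer in $\Iso(\tilde M,\tilde g,\tilde V)$ is connected, hence contained in $G$ and equal to $I$; transitivity then gives $\Iso(\tilde M,\tilde g,\tilde V) = G\cdot I = G$, whence $\Gamma \subset G$ and $M = \Gamma \setminus \tilde M = \Gamma \setminus G/I$. In the co-homogeneity one case I would apply the same connectedness argument fiberwise: after passing to the finite-index subgroup permitted by Theorem \ref{reduction}, $\Gamma$ preserves the fibration, its restriction to each orbit lands in $G$, and each fiber becomes $\Gamma \setminus G/I_u$.

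The hard part is the globalization step---passing from the purely local homogeneity of Proposition \ref{integrability} to transitivity of a global Lie group on $\tilde M$. This is exactly where the completeness supplied by Theorem \ref{completeness} is indispensable: without it local Killing fields need not extend, and in the smooth (non-analytic) pseudo-Riemannian setting one cannot fall back on analytic continuation, so the extension must be obtained through completeness of the Cartan connection. A secondary difficulty is guaranteeing $\Gamma \subset G$ rather than merely in the normalizer of $G$; the unipotent, hence connected, isotropy produced by the passage to $M^*$ in \ref{replace_M*} is precisely what eliminates the component-group ambiguity and allows $\Gamma$ to act by left translations on $G/I$ (resp. $G/I_u$).
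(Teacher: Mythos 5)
Your overall architecture is the paper's: globalize the local Killing algebra on $\widetilde{M}$, use completeness to upgrade an open orbit to a transitive action, split into the locally homogeneous and co-homogeneity-one cases, and feed $\Gamma=\pi_1(M)$ into $G$ via the unipotent isotropy obtained on $M^*$. The genuine gap is in the one step you yourself flag as hard: the extension of local Killing fields to all of $\widetilde{M}$ (resp.\ to a whole lifted fiber $\widetilde{F}$). You assert that this follows from ``completeness of the Cartan connection $\omega$,'' but nothing in the paper and nothing in your argument establishes completeness of $\omega$: Theorem \ref{completeness} gives geodesic completeness of the metric $g$, which is a different statement, and the Gromov--Pecastaing extension theory in the smooth category only produces extensions of local Killing fields along the integrability locus (an open dense set), not on the whole simply connected manifold. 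Moreover, the reason you give for discarding analytic continuation --- that one is in a merely smooth pseudo-Riemannian setting --- is exactly backwards here: after the reduction to the core, $\widetilde{M}$ (or each fiber $\widetilde{F}\times\{t\}$) is locally homogeneous, hence real-analytic, and the paper's proof consists precisely in invoking the classical extension theorem of Nomizu for local Killing fields on a simply connected analytic manifold. So the mechanism you reject is the one the paper uses and that works, while the mechanism you substitute for it is unproved.

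Two smaller points. Once the global algebra $\g$ is in hand, your use of completeness to obtain transitivity agrees with the paper, which derives closedness of $I$ and $J$ and the identification $\widetilde{F}\cong G/I$ from completeness of the leaves (Section \ref{Section: Completeness along the leaves}) and of $M$. On the other hand, your argument that $\Gamma\subset G$ because the full point-stabilizer is unipotent hence connected is only literally available for the identity pseudo-group $\G^0$; for the full isometry group the paper controls the isotropy and the orbits only up to finite index (Proposition \ref{integrability}), so the inclusion $\Gamma\subset G$ should be asserted, as the paper implicitly does, only after passing to a finite-index subgroup.
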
 
In particular, the core $N$ (constructed in Section \ref{Cartan_Connection}) of any compact Brinkmann spacetime,    is itself a compact Brinkmann spacetime that satisfies property (i). So this proposition yields the global algebraic structure of $N$ stated in the second part of Theorem \ref{reduction}.
\bigskip

The remaining part of the section is devoted to the proof of Proposition \ref{algebraic.description}. Here, we also assume that $M$ satisfies (ii), allowing a more precise algebraic description.
\medskip

Remember the notations of the objects in  $(M, g)$: $V$ the Brinkmann         parallel  vector field,  
$\mathcal V$ the $1$-dimensional foliation that it determines, $\F$ the codimension $1$ lightlike geodesic foliation 
tangent to $V^\perp$.  
We denote  by 
$\tilde{V}$, $\tilde{\mathcal V}$ , $\tilde{\F}$ the  lift of the corresponding    objects to  $\tilde{M}$.
If $F$ is a leaf 
of $\F$, then 
$\tilde{F}$ is a connected component of its lift in
$\tilde{M}$.

The universal cover $\widetilde{M}$ is 
either locally homogeneous or topologically of the form $\widetilde{F} \times [0, 1]$, and each 
$\widetilde{F} \times \{t\}$ is locally homogeneous, and obviously simply connected. In  both  cases, 
the whole $\widetilde{M}$ or the factors  $\widetilde{F} \times \{t\}$ are  (real) analytic. A classical 
result \cite{Nom} says, in this case, that any locally defined vector field 
$X$ on an open subset $U$ of $M$, extends coherently to $M$. So, the (global)  Killing algebra
$\g$ of $\widetilde{M}$ acts either transitively or with codimension one orbits.

Let us prove that this infinitesimal $\g$-action is complete, i.e. that any Killing field $X \in \g$ has a complete flow, or alternatively, 
that the  simply connected ``abstract'' group $G$ with Lie algebra $\g$ acts (globally) on $\widetilde{M}$.
We start treating the co-homogeneity one case, and show afterwards how to adapt the proof to the 
homogeneous case.

\subsubsection{The co-homogeneity one case}
\label{cohomogenityone}

\begin{fact} Let $\tilde{p} \in \tilde{F}$ and $\i \subset \g$ its stabilizer algebra.  Then, 
	$\i$ is abelian and acts unipotently on $T_{\tilde{p}} M$.
\end{fact}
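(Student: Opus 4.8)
The plan is to derive both assertions from the structure of the linear isotropy representation of $\i$ on $T_{\tilde{p}}M$, combined with the unipotency of the isotropy already established. Recall that $\i=\{X\in\g : X(\tilde{p})=0\}$, and that to each such Killing field one attaches its linear isotropy $A_X:=\nabla X|_{\tilde{p}}$, a skew-symmetric endomorphism of $(T_{\tilde{p}}M,g_{\tilde{p}})$, so that $A_X\in\mathfrak{o}(1,n)$ and the flow of $X$ differentiates at the fixed point $\tilde{p}$ to $\exp(tA_X)$. The assignment $\rho\colon X\mapsto A_X$ is the isotropy representation $\i\to\mathfrak{o}(1,n)$, and I would first record that it is \emph{faithful}: a Killing field vanishing at $\tilde{p}$ with $A_X=0$ has vanishing $1$-jet there, hence vanishes identically by real-analyticity of the locally homogeneous leaf $\tilde{F}$. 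Thus $\i\cong\rho(\i)$ as Lie algebras, and everything reduces to locating the image $\rho(\i)\subset\mathfrak{o}(1,n)$.

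Next I would pin down this image. Every $X\in\g$ commutes with $V$, so its flow preserves $V$; since $\tilde{p}$ is fixed by the flow of $X\in\i$, this forces $A_X\,V(\tilde{p})=0$. Hence $\rho(\i)$ lies in $\h$, the stabilizer in $\mathfrak{o}(1,n)$ of the isotropic vector $V(\tilde{p})$, which was identified with $\so(n-1)\ltimes\R^{n-1}$. I then invoke the unipotency of the isotropy of the $\G^0$-action, established in par.~\ref{replace_M*} through Corollary \ref{M*: corollary for unipotent isotropy}: the isotropy sits inside the nilradical $\n\cong\R^{n-1}$ of $\h$. Therefore $\rho(\i)\subseteq\n$.

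It remains only to read off the two conclusions. The nilradical $\n$ is abelian, so $\rho(\i)$ is abelian, and by faithfulness $\i$ itself is abelian. Moreover, in the null-frame description the elements of $\n$ are precisely the nilpotent (strictly upper-triangular) matrices of $\mathfrak{o}(1,n)$ acting on $T_{\tilde{p}}M\cong\R^{n+1}$, so $\rho(\i)$ consists of nilpotent endomorphisms; that is, $\i$ acts unipotently on $T_{\tilde{p}}M$. The steps needing genuine care are the faithfulness of $\rho$ and the correct transport of the unipotency statement from $M^{*}$ to the present reduced manifold $N$; once $\rho(\i)$ is confined to the abelian nilradical $\n$, both claims are immediate.
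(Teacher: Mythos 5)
Your proof is correct and follows essentially the same route as the paper: both arguments confine the isotropy to the unipotent $\R^{n-1}$-factor of $H=\SO(n-1)\ltimes\R^{n-1}$ (the paper phrases this as ``acting trivially on $\tilde{V}(\tilde{p})^\perp/\R\tilde{V}(\tilde{p})$ since preserving a parallelism,'' which is the same condition you extract from the unipotency supplied by the $M^*$ reduction), and then read off abelianness and nilpotency of that factor. Your explicit verification of faithfulness of the isotropy representation and of $A_X V(\tilde{p})=0$ just spells out steps the paper leaves implicit.
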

\begin{proof}  The group $I$ determined by $\i$ is contained in the subgroup 
	of the orthogonal group 
	$(T_{\tilde{p}} \tilde{M}, \tilde{g}_{\tilde{p}})$ preserving $\tilde{V}(\tilde{p})$, and acting 
	trivially on $ \tilde{V}(\tilde{p})^\perp / \R \tilde{V}(\tilde{p})$ since the isotropy is unipotent. 
	This group is abelian and acts unipotently.
\end{proof}

\begin{fact} Let $\j$ be the Lie subalgebra preserving the $1$-leaf $\tilde{ \mathcal V} (\tilde{p})$. Then $\j = 
	\i \oplus \z$, where $\z = \R \tilde{V}$ (which is contained in the center of $\g$). Furthermore, 
	$\j$ is an ideal of $\g$.  
\end{fact}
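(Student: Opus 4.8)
The plan is to reduce everything to linear algebra at the single point $\tilde{p}$, using that by construction every $X\in\g$ satisfies $[X,\tilde{V}]=0$, that $\tilde{V}$ is parallel (hence Killing, so $\tilde{V}\in\g$), and the description of the linear isotropy from the preceding fact. First I would record the two easy inclusions and the centrality. Since $[X,\tilde{V}]=0$ for all $X\in\g$, the line $\z=\R\tilde{V}$ is central; moreover $\z\subset\j$ because the flow of $\tilde{V}$ obviously preserves its own orbit $\tilde{\mathcal V}(\tilde{p})$. Also $\i\subset\j$: an element of $\i$ fixes $\tilde{p}$ and commutes with $\tilde{V}$, so its flow fixes the entire $\tilde{V}$-orbit of $\tilde{p}$ pointwise, and in particular preserves $\tilde{\mathcal V}(\tilde{p})$.

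Next I would prove the pointwise characterization $\j=\{X\in\g:\ X(\tilde{p})\in\R\,\tilde{V}(\tilde{p})\}$, which simultaneously yields the decomposition. Necessity is immediate, since if the flow of $X$ preserves the $1$-leaf then $X(\tilde{p})$ must be tangent to it, i.e. in $\R\,\tilde{V}(\tilde{p})$. For sufficiency, given such an $X$ write $X(\tilde{p})=c\,\tilde{V}(\tilde{p})$ with $c\in\R$; then $X-c\tilde{V}\in\g$ vanishes at $\tilde{p}$, so $X-c\tilde{V}\in\i$, whence $X\in\i+\z$ and $X$ preserves the leaf by the two inclusions above. The sum is direct because $\tilde{V}$ is nowhere vanishing, so $\tilde{V}(\tilde{p})\neq0$ and $\i\cap\z=0$. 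This gives $\j=\i\oplus\z$.

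The ideal property is the heart of the matter. Since $[\g,\z]=0\subset\j$, it suffices to show $[\g,\i]\subset\j$. For $X\in\i$ and $Y\in\g$, as $X(\tilde{p})=0$ and $\nabla$ is torsion-free one has $[Y,X](\tilde{p})=\nabla_{Y(\tilde{p})}X=(\nabla X)_{\tilde{p}}\big(Y(\tilde{p})\big)$, and the endomorphism $(\nabla X)_{\tilde{p}}$ of $T_{\tilde{p}}\tilde{M}$ is, up to sign, exactly the element of the linear isotropy representation attached to $X$. Two facts now combine: in the co-homogeneity one situation the $\g$-orbit through $\tilde{p}$ is the leaf $\tilde{F}$ of $\F$, which is tangent to $\tilde{V}(\tilde{p})^{\perp}$, so $Y(\tilde{p})\in\tilde{V}(\tilde{p})^{\perp}$ for every $Y\in\g$; and by the previous fact the isotropy acts trivially on $\tilde{V}(\tilde{p})^{\perp}/\R\,\tilde{V}(\tilde{p})$, so $(\nabla X)_{\tilde{p}}$ maps $\tilde{V}(\tilde{p})^{\perp}$ into $\R\,\tilde{V}(\tilde{p})$. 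Hence $[Y,X](\tilde{p})\in\R\,\tilde{V}(\tilde{p})$, and by the characterization above $[Y,X]\in\j$, proving $\j$ is an ideal.

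The step I expect to be the main obstacle is precisely this last bracket computation, and specifically its two geometric inputs: identifying the tangent space of the orbit with $\tilde{V}(\tilde{p})^{\perp}$ (which rests on the reduction Theorem, namely that in the degenerate case the orbits are the lightlike totally geodesic $\F$-leaves), and exploiting the exact unipotent shape of the linear isotropy — that it acts trivially modulo $\R\,\tilde{V}$ — to guarantee that the bracket lands back in the $\tilde{V}$-direction. Everything else is formal bookkeeping with Killing fields vanishing at a point.
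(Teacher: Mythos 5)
Your proof is correct, and the decomposition $\j=\i\oplus\z$ is obtained essentially as in the paper (evaluation at $\tilde p$: the map $X\mapsto X(\tilde p)$ sends $\j$ into $\R\,\tilde V(\tilde p)$ with kernel $\i$, so $\i$ has codimension one in $\j$ and $\j=\i\oplus\z$). Where you genuinely diverge is the ideal property. The paper argues globally: $\g$ acts (locally) on the quotient space $\tilde F/\tilde V$ preserving a parallelism — this is exactly where the earlier replacement of $M$ by $M^*$ pays off — and since an automorphism of a parallelism with a fixed point is trivial, $\j$, which fixes the point $\tilde{\mathcal V}(\tilde p)$, acts trivially on $\tilde F/\tilde V$; hence $\j$ is precisely the kernel of the $\g$-action on the leaf space, and kernels are ideals. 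You instead run a pointwise bracket computation, $[Y,X](\tilde p)=(\nabla X)_{\tilde p}(Y(\tilde p))$ for $X\in\i$, and feed in two facts: the orbit through $\tilde p$ is the $\F$-leaf, so $Y(\tilde p)\in\tilde V(\tilde p)^{\perp}$, and the linear isotropy acts trivially on $\tilde V(\tilde p)^{\perp}/\R\,\tilde V(\tilde p)$, so its Lie algebra maps $\tilde V^{\perp}$ into $\R\,\tilde V$; then the characterization of $\j$ by its value at $\tilde p$ closes the argument. Both proofs ultimately rest on the same geometric input (the parallelism transverse to $\tilde V$ inside the leaf, equivalently the unipotent shape of the isotropy), but they package it differently: your version is more elementary and self-contained, avoiding any discussion of the quotient $\tilde F/\tilde V$, while the paper's version buys slightly more — it identifies $\j$ as the \emph{full} kernel of the action on the leaf space, an identification that is reused later when $Q=G/J$ is taken as the structural group of the transversally Lie foliation. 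One point worth making explicit if you write this up: the inclusion $Y(\tilde p)\in\tilde V(\tilde p)^{\perp}$ uses that in the degenerate case the $\g$-orbit is a codimension-one lightlike submanifold containing the isotropic direction $\tilde V$, and the only degenerate hyperplane containing an isotropic vector $V$ is $V^{\perp}$ itself; this is consistent with the paper's standing identification $\tilde F\sim G/I$.
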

\begin{proof} $\j$ contains $\i \oplus \z$ and $\i$ has codimension  $1$ in $\j$, so we have equality. 
	The Lie algebra $\g$ acts, locally, on the quotient space $\tilde{F}/\tilde{V}$ by preserving 
	a parallelism. The Lie algebra $\j$ acts by fixing  the point $\tilde{V} (\tilde{p})$ 
	of 
	$\tilde{F}/\tilde{V}$ and hence acts trivially on it 
	(an automorphism of a parallelism is trivial if it has a fixed point). 
	It follows that $\j$ is exactly the kernel of the $\g$-action on $\tilde{F}/\tilde{V}$, and hence 
	it is normal.
\end{proof}

\begin{corollary} $\j$ is contained in the nil-radical of $\g$.
	
\end{corollary}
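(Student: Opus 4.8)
The plan is to read off from the two preceding facts that $\j$ is an \emph{abelian} ideal of $\g$, after which the corollary drops out of the elementary structure theory of the nilradical; in effect all the substance has been front-loaded into the identification $\j = \i \oplus \z$.

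First I would verify that $\j$ is abelian. By the previous fact we have the vector space decomposition $\j = \i \oplus \z$ with $\z = \R\tilde{V}$ central in $\g$, while the fact before that gives $\i$ abelian. Hence
\[
[\j,\j] = [\i \oplus \z,\, \i \oplus \z] = [\i,\i] + [\i,\z] + [\z,\z] = 0,
\]
the last two brackets vanishing by centrality of $\z$ and the first by commutativity of $\i$. Thus $\j$ is an abelian Lie algebra which, again by the preceding fact, is an ideal of $\g$.

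It then remains to invoke the standard fact (valid in characteristic zero) that the sum of two nilpotent ideals of a finite-dimensional Lie algebra is again a nilpotent ideal; consequently the nilradical $\n$, being the maximal nilpotent ideal, absorbs every nilpotent ideal. Since an abelian ideal is a fortiori nilpotent, I conclude $\j \subseteq \n$, which is exactly the assertion.

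I do not expect a genuine obstacle at this step: the real work — that $\i$ is abelian and that $\z$ lies in the center — was already done in the two facts above, and what is left is a citation of standard Lie theory. As a cross-check tailored to the present setting, one can instead observe that every element of $\j$ acts $\ad$-nilpotently on $\g$ (trivially for the central part $\z$, and for $\i$ as a reflection of the unipotency of the isotropy representation established earlier), which by Engel's theorem re-identifies $\j$ as a nilpotent ideal and hence places it in $\n$.
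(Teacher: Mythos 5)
Your argument is correct and is exactly the paper's: the proof there reads, in full, ``Because $\j$ is an abelian ideal,'' relying on the same two preceding facts ($\i$ abelian, $\z$ central, $\j = \i \oplus \z$ an ideal) and on the standard absorption of nilpotent ideals into the nilradical. Your version merely spells out the bracket computation and the citation of Lie theory that the paper leaves implicit.
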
  
\begin{proof} 
	Because $\j$ is an abelian ideal.
\end{proof} 

\begin{fact} Let $G$ be the simply connected Lie group determined  by $\g$, $I$ and $J$ 
	its subgroups tangent to $\i$ and $\j$ respectively. Then $I$ and $J$ are closed in $G$
	(and are simply connected). Furthermore, $\tilde{F} $ is isomorphic to $G/I$. 
	
\end{fact}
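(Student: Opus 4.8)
The plan is to reduce everything to the single fact, recorded in the Corollary just above, that $\j$ lies in the nilradical $\n$ of $\g$; since $\i\subset\j$ and $\z\subset\j$, all three subalgebras sit inside $\n$. As $G$ is simply connected, the connected subgroup $N\subset G$ integrating the ideal $\n$ is closed and simply connected (the nilradical of a simply connected Lie group integrates to a closed, simply connected nilpotent subgroup). Inside $N$ the exponential map is a global diffeomorphism, so, by the Baker--Campbell--Hausdorff formula, every subalgebra exponentiates to a closed, simply connected subgroup. Applying this to $\i$ and $\j$ shows at once that $I=\exp(\i)$ and $J=\exp(\j)$ are closed in $N$, hence closed in $G$, and are simply connected; in particular $Z=\exp(\z)\cong\R$.

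It then remains to identify $\tilde{F}$ with $G/I$. In the co-homogeneity one case the orbits of the (complete) $\g$-action are exactly the leaves $\tilde{F}\times\{t\}$, so the global $G$-action restricts to a transitive action on $\tilde{F}$. The stabilizer $G_{\tilde{p}}$ of $\tilde{p}$ is a closed subgroup whose Lie algebra is, by definition, $\i$, so its identity component is precisely $I$. To see that $G_{\tilde{p}}$ is in fact connected, I would feed the fibration $G_{\tilde{p}}\to G\to\tilde{F}$ into the long exact homotopy sequence: since $G$ is connected and simply connected, $\pi_0(G)=\pi_1(G)=0$, and $\tilde{F}$ is simply connected, so the portion
\[
\pi_1(\tilde{F})\longrightarrow \pi_0(G_{\tilde{p}})\longrightarrow \pi_0(G)
\]
reads $0\to\pi_0(G_{\tilde{p}})\to 0$ and forces $G_{\tilde{p}}$ to be connected. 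Hence $G_{\tilde{p}}=I$, and the orbit map $gI\mapsto g\cdot\tilde{p}$ is the desired $G$-equivariant diffeomorphism $G/I\xrightarrow{\ \sim\ }\tilde{F}$.

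The delicate point, and the one on which I would concentrate, is this connectedness of the stabilizer: a priori $\tilde{F}$ is only $G/G_{\tilde{p}}$ with $G_{\tilde{p}}$ possibly having several components, and it is exactly the simple connectivity of $\tilde{F}$ (each leaf $\tilde{F}\times\{t\}$ being simply connected, as noted in \ref{global_action}) together with that of $G$ that removes this ambiguity. I would also take care that the faithfulness of the isotropy representation — an isometry fixing $\tilde{p}$ with trivial $1$-jet is the identity — is what pins the Lie algebra of $G_{\tilde{p}}$ down to $\i$; the nilpotency argument of the first paragraph is then what rules out any toral factor in $I$ or $J$ and yields their simple connectivity directly, without any need to compute $\pi_2(\tilde{F})$.
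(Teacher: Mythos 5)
Your treatment of the closedness and simple connectivity of $I$ and $J$ is correct and in fact more explicit than what the paper writes down: since $\i\subset\j$ and, by the preceding Corollary, $\j$ is an abelian ideal contained in the nil-radical $\n$, both subgroups sit inside the closed simply connected nilpotent subgroup $N\subset G$ integrating $\n$, where every subalgebra exponentiates to a closed simply connected subgroup. This cleanly sidesteps any computation of $\pi_2(\tilde{F})$, and your homotopy-sequence argument that a stabilizer of a transitive action of a simply connected group on a simply connected manifold is connected is also fine.

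The genuine gap is in the clause you put in parentheses: ``the (complete) $\g$-action.'' At this stage nothing guarantees that the Killing fields of $\g$ have complete flows, i.e.\ that the infinitesimal transitive action on $\tilde{F}$ integrates to a global transitive action of the abstract simply connected group $G$; the paragraph opening par.~\ref{global_action} announces that this completeness of the action is exactly what remains to be proved, and the Fact under discussion is where it is delivered. The paper's one-line proof names the actual input: the geodesic completeness of $F$, hence of $\tilde{F}$ (Theorem~\ref{completeness}, or already the synthetic completeness of the $\F$-leaves from Section~\ref{Section: Completeness along the leaves}). Without that input, local homogeneity only furnishes a $(G,G/I)$-structure on $\tilde{F}$, that is, a developing-type local diffeomorphism towards $G/I$; promoting it to a global diffeomorphism (rather than, say, an embedding onto a proper open subset, as happens for incomplete homogeneous Brinkmann metrics) is precisely where completeness is used, via a Palais-type integration of complete Killing fields or a completeness argument for the developing map. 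Your proof never invokes completeness anywhere, so it establishes the algebraic assertions about $I$ and $J$ but assumes, rather than proves, the transitive global action underlying the identification $\tilde{F}\cong G/I$.
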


\begin{proof} Since $\j$ is an ideal in $\g$, $J$ is a normal Lie subgroup of the simply connected Lie group $G$, hence closed in $G$ (see for example \cite[p. 610]{Mostow50}), and simply connected (see \cite[Lemma 1]{Mostow50}). Moreover,  $J$ is abelian and simply connected, so the subgroup $I$ (which is simply connected) is closed in $J$, and hence in $G$. 
By Theorem \ref{completeness} (or even its partial version in Section  \ref{Section: Completeness along the leaves}),  the leaf $F$ is complete, and thus $\tilde{F}$ is complete. Consequently, the (global) Killing fields tangent to $F$ are complete (\cite[Ch.9 Proposition 30]{O'Neill}), so, by Palais theorem \cite{palais}, the infinitesimal isometric action of the Lie algebra $\g$ integrates into a transitive Lie group action of $G$. Since $I$ is a closed subgroup of $G$, one can consider the homogeneous space $G/I$, and $\tilde{F}$ is locally modeled on it (see \cite{Mostow50}). Therefore, we have a developing map $d: \tilde{F} \to G/I$, equivariant for the left action of $G$, and which is then a diffeomorphism.  
\end{proof}

\subsubsection{The locally homogeneous case}  Now, in the locally homogeneous case, consider $\h$ the codimension $1$ subalgebra  of $\g$ preserving a leaf 
$\widetilde{F}$ of $\widetilde{\mathcal F}$. The space of leaves $\widetilde{\F}$ is defined by a closed $1$-form $\omega=\widetilde{g}(\widetilde{V},\cdot)$, so it has an affine structure preserved by the action of $\g$ (see \cite[Section 3]{Content1} for more details). A (local) isometry preserving $\widetilde{V}$ preserves $\omega$, hence induces a (local) translation on the (affine) space of leaves.
Therefore, $\h$ preserves in fact individually these leaves, and is thus an ideal. 
Its associated subgroup $H$ in $G$ is closed, and as in the previous case, the isotropy group 
is closed in $H$ and hence in $G$. As in the previous case, we can take   the quotient $G/I$ and 
use completeness of $M$  to deduce that $\widetilde{M}$ is isomorphic to $G/I$. $\Box$.

\section{The degenerate case, the $\overline{\mathcal V}$-foliation on a  leaf $F$}\label{Section 8}

\label{closure foliation}
We  investigate  here the degenerate case, that is, when $M$    has  local co-homogeneity one, and $\F$ has moreover all its leaves compact (see Par. \ref{summarizing}).
\medskip

In all the section, we fix a leaf $F$ of $\F$.

\subsection{The cocktail of geometries on $F$}\label{Par. 8.1} We will   
exploit existence of many compatible geometric structures on the compact manifold $F$, which we list here:

\begin{itemize}

\item
A lightlike metric induced from $(M, g)$. 

\item 
A connection induced from $M$,  since $F$  is geodesic.  

\item

The  parallel vector field $V$.

\item

The $1$-foliation $\mathcal V$ determined by $V$.  It is  transversally Riemannian, as it is the case of the characteristic (null) foliation 
on any lightlike geodesic submanifold in a Lorentzian manifold (see for instance \cite{Zeg_geodesic}). 
\item

The (pseudo) Lie algebra $\g$ acting transitively on  $F$ by preserving all these structures.

\item

It is a standard technique in transversally Riemannian foliation theory to lift   the foliation to the Stiefel space of  
transversal orthonormal frames, in order to get a transversally parallelizable foliation. 
In our situation, we already replaced $M$ by  $M^*$, without loosing its Brinkmann nature, essentially for this aim (par.  \ref{summarizing}). So the $\mathcal V$-foliation on  $F$ is  transversally parallelizable. 
\end{itemize}

\subsection{Dynamics of the closure foliation $(F, \overline{\mathcal V})$,     the holonomy subgroup $\Gamma^0$.}

One fundamental result in Riemannian foliation theory is that taking the closure of a Riemannian foliation $\mathcal V$, gives
rise to a singular Riemannian foliation $\overline{\mathcal V}$, which is actually  regular in the transversally parallelizable case \cite{Mol}. 
Carri\`ere's Theorem,  \cite{Car1, Car2}, applies  since  $\dim \mathcal V = 1$,  and  says that the $\overline{\mathcal V}$-leaves are tori. 
The goal of this section is to show in our rich situation that,  essentially,   $\overline{\mathcal V}$ is given by an isometric  action of a torus. 

\subsubsection{Notations}  Fix  $p  \in F \in M $,  so  $F$ is the $  \F(p)$- leaf. Consider a lift  $\tilde{p} \in \tilde{M}$ of $p$ and let $\tilde{F}= \tilde{\F} (\tilde{p})$.

Consider now the following objects:

\begin{itemize}

\item
The foliation $\bar{\mathcal V}$ of $F$ given by the closure
of the $\mathcal V$-leaves, and let 
$\widetilde{\bar{ \mathcal V}} $ be its lift to $\tilde{F}$. 

\item
$I$,  the stabilizer of $\tilde{p} $ in $G$, then  $\tilde{F}   \sim G / I$.

\item   $J$, 
the stabilizer   of  $\tilde{\mathcal V} (\tilde{p})$ in $G$.  
Remember  that $J$ is normal in $G$ and is furthermore abelian and  contained in the nil-radical of $G$
(par. \ref{cohomogenityone}).

\item  $\pi: G \to Q = G/J$,  the quotient map. 
In fact, $Q$ is identified to the quotient space 
$\tilde{F} / \tilde{\mathcal V}$.

\item  $\Gamma = \pi_1(F) \subset G$. This also equals $\pi_1(M)$ since $M$ is a product of $F$ by an interval 
(Proposition \ref{algebraic.description}). 

\item $L$, the closure in $Q$ of $\pi(\Gamma)$, and $L^0$ its identity component. 

Set  $P = \pi^{-1}(L)$ and $P^0 = \pi^{-1}(L^0)$. Then $P^0$ is the identity component of $P$, and it contains $J$. Furthermore, $\pi(\Gamma) \cap L^0$ is dense in $L^0$.

\item  $\Gamma^0 = \Gamma \cap P^0$. As $L^0$ is normal in $L$, $P^0$  and $\Gamma^0$ are  normal in $P$ and 
$\Gamma$ respectively.

\end{itemize}

\subsubsection{Algebraic description.} Let us give an algebraic description, i.e. by means of $G$, of all the present quantities:

\begin{itemize}

\item

First, $\tilde{F} = G/I$, 

\item  For $x \in G$, the $\tilde{\mathcal V}$-leaf of $xI$ is $Z x I $, where $Z$ is the simply connected Lie group determined by $\z = \R \tilde{V}$, and we have $ Z x I = x Z I = x J = J x$. So it corresponds to the subset $J x$ of $G$.

\item The quotient leaf-space $\tilde{F} / \tilde{\mathcal V} = G/J = Q$.

\item  The orbit  $P (xI)$ corresponds to the subset of $G$:  $P x I = P (x I x^{-1}) x = P.x $, since $x I x^{-1} \subset 
x J x^{-1} = J \subset P$. 

\item  Similarly $P^0 x I = P^0 x \subset G$.

\item The $\Gamma$-saturation of a leaf $\tilde{\mathcal V} (x I)$  corresponds to 
$\Gamma J x I  = \Gamma J x$, and its closure is $\overline{\Gamma J x} = \overline{\Gamma J } x = P x $.

\item The connected component of  $xI$ in    $ \overline{\Gamma   \tilde{\mathcal V} (x I)}$ corresponds to 
$P^0 x
= \overline {\Gamma^0 J}x $.  In other words, the connected component of the lift in $\tilde{F}$ of the closure of any $\mathcal V$-leaf in $F$, has the form 
$P^0x$ (as a subset in $G$ or as a $P$-orbit of $xI \in G/I$).  Therefore,  
the closure foliation $\overline{\tilde{\mathcal V}}$ corresponds to the $P^0$-action.

\end{itemize}

\subsubsection{Structure of $L^0$}

\bigskip

Observe that: $L^0 = 1$ $\iff$ $P^0 = J$ $\iff$
$\Gamma \cap J \neq 1$, and all these are  equivalent to  the fact that all $\mathcal V$ 
leaves in $F$ are periodic, with the same period.  This periodic case is trivial with respect to our considerations here and so 
henceforth, we will assume $L^0 \neq 1$.

\begin{fact} $\Gamma^0$ is abelian. Furthermore,  assume we are not in the periodic case, then $\pi(\Gamma^0)$ is dense in $L^0$, 
and  $\pi:  P^0 \to L^0$  is injective on $\Gamma^0$,  and hence $L^0$ is abelian.

\end{fact}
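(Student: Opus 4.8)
The plan is to derive the abelianness of $\Gamma^0$ directly from the toral structure of the closure foliation, and then to obtain the density and injectivity assertions almost formally from the algebraic dictionary already set up for $P^0$, $L^0$ and $J$. First I would identify the $\overline{\mathcal V}$-leaf through $p$ as a double coset space. By the algebraic description, the connected component of the lift to $\tilde F$ of the closure $\overline{\mathcal V}(p)$ is the $P^0$-orbit $P^0\tilde p = P^0/I$ (recall $I\subset J\subset P^0$). Since $P^0$ is normal in $P$ and $\Gamma\subset P$, the stabilizer in $\Gamma$ of this component is exactly $\Gamma\cap P^0=\Gamma^0$, so the leaf $\overline{\mathcal V}(p)\subset F$ is $\Gamma^0\setminus P^0/I$. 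Now $\Gamma^0$ acts freely, properly and cocompactly on the connected manifold $P^0/I$ (freeness and properness are inherited from the $\Gamma$-action on $G/I$, and cocompactness holds because the leaf is compact), so $P^0/I\to\Gamma^0\setminus P^0/I$ is a normal covering with deck group $\Gamma^0$. By Carri\`ere's Theorem (\cite{Car1,Car2}) the base is a torus $\mathbb{T}^d$, hence $\Gamma^0$ fits into $1\to\pi_1(P^0/I)\to\pi_1(\mathbb{T}^d)\to\Gamma^0\to 1$ and is a quotient of the free abelian group $\pi_1(\mathbb{T}^d)=\Z^d$; therefore $\Gamma^0$ is abelian. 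I expect this to be the step carrying the real content.

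The density and injectivity statements are then essentially set-theoretic. Since $J=\ker\pi$ is contained in $P^0$, I would first check the identity $\pi(\Gamma^0)=\pi(\Gamma)\cap L^0$: the inclusion $\subseteq$ is clear, and conversely if $\pi(\gamma)\in L^0=\pi(P^0)$ then $\gamma g^{-1}\in J\subset P^0$ for some $g\in P^0$, forcing $\gamma\in P^0$ and hence $\gamma\in\Gamma^0$. As $\pi(\Gamma)\cap L^0$ is dense in $L^0$ by construction, so is $\pi(\Gamma^0)$, which gives the density assertion. For the injectivity of $\pi$ on $\Gamma^0$, I note that $\ker(\pi|_{P^0})=J$, so the kernel of $\pi|_{\Gamma^0}$ is $\Gamma^0\cap J=\Gamma\cap J$ (using $J\subset P^0$). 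Outside the periodic case we have $L^0\neq 1$, and by the recorded equivalence this is exactly $\Gamma\cap J=\{1\}$; hence $\pi|_{\Gamma^0}$ is injective.

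Finally, $L^0$ abelian follows by a continuity argument from the previous two points: $\pi(\Gamma^0)$ is a homomorphic image of the abelian group $\Gamma^0$, hence abelian, and it is dense in $L^0$. Since the set $\{(a,b)\in L^0\times L^0: aba^{-1}b^{-1}=e\}$ is closed and contains the dense subset $\pi(\Gamma^0)\times\pi(\Gamma^0)$, it must equal all of $L^0\times L^0$, so $L^0$ is abelian. The only genuine difficulty lies in the first paragraph: one must be certain that the relevant closure leaf is precisely $\Gamma^0\setminus P^0/I$ and that $\Gamma^0$ really appears as the full deck group of a torus. Once Carri\`ere's toral structure is in hand, the abelianness of $\Gamma^0$, and then of $L^0$, are immediate.
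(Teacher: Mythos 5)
Your proof is correct and follows essentially the same route as the paper: identify the $\overline{\mathcal V}$-leaf as $\Gamma^0\setminus P^0/I$, apply Carri\`ere's theorem to see it is a torus, conclude that $\Gamma^0$ is a quotient of $\Z^d$ and hence abelian, and then pass density and abelianness to $L^0$ via injectivity of $\pi$ on $\Gamma^0$ (which, as you note, is exactly the recorded equivalence $L^0\neq 1\iff\Gamma\cap J=1$). The only cosmetic difference is in the density step, where you derive $\pi(\Gamma^0)=\pi(\Gamma)\cap L^0$ and quote the density already recorded in the notation section, whereas the paper invokes minimality of $\mathcal V$ inside the toral leaf; the two arguments are interchangeable here.
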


\begin{proof} This follows from Carri\`ere's Theorem \cite{Car1, Car2} on closure of orbits of transversally Riemannian flows
(i.e. foliations of dimension $1$).  With our notation, it says that a  leaf of $\overline{\mathcal V}$ 
is a torus, on which $\mathcal V$ is diffeomorphic to a minimal (i.e. having dense leaves) linear foliation (of dimension $1$).
We are here in a transversally parallelizable case, where all  $\overline{\mathcal V}$- leaves
are diffeomorphic. They are all given by $P^0$-orbits. Such an orbit is of the form $ \Gamma^0 \setminus P^0/I$. 
Since the $\overline{\mathcal V}$-leaf is a torus, and $\Gamma^0$ is the deck group of its cover $P^0/I$, 
then $\Gamma^0$ is abelian. Now, $L^0$ is the quotient space of the $\mathcal V$-foliation when lifted to the universal cover.
Since $\mathcal V$ is minimal (in the $\overline{\mathcal V}$-leaf), then $\pi(\Gamma^0)$ is dense in $L^0$.
Finally,  besides the periodic case, if we assume $\Gamma^0 \cap J = 1$, then $\Gamma^0$ injects in $L^0$, and $L^0$ is therefore abelian.
\end{proof}

\begin{remark}  
The quotient space 
$\tilde{F} / \tilde{\mathcal V}$ is identified to the quotient group  $ Q = G/J $.  Using the  language of transversally Riemannian foliation \cite{Mol},  one says that $\mathcal V$ is a transversally Lie foliation with structural group $Q$,  which means that $\mathcal V$ has a transversally    geometric structure modeled on $(Q, Q)$ (where the group $Q$ acts by left translation on the space $Q$). In general, one reduces 
the study of   transversally Riemannian foliations, first to  the transversally  parallelizable case. Then, 
for closures of leaves, or say when there is  a dense leaf, one proves  that the transversally parallelizable foliation is indeed    a transversally Lie foliation.  Actually, Carri\`ere's Theorem  \cite{Car1, Car2} was proved for transversally Lie foliations. Here, we  have   a richer situation, where the ambient manifold itself  $F$  has a  (local) geometric structure of type 
$(G, G/I) $,  which  induces a transversally Lie foliation  structure of type $(Q, Q)$ for $\mathcal V$.
\end{remark}

\subsection{The syndetic hull $H(\Gamma^0, P^0)$}

\begin{proposition} \label{syndetic_hull}

Let $C^0$ be the identity component of the centralizer $Z(\Gamma^0)$ of $\Gamma^0$ in $P^0$. 
Then:

- $C^0$ is transversal to $J$  (in $P^0$) and hence acts transitively on the $\overline{\tilde{\mathcal V}}$-leaves,

- $C^0$ is abelian and contains $\Gamma^0 $.
\end{proposition}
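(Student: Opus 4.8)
The plan is to realise $C^0 = \big(Z(\Gamma^0)\big)^0$ as the \emph{syndetic hull} of $\Gamma^0$, exploiting the torus structure of the $\overline{\tilde{\mathcal V}}$-leaves provided by Carri\`ere's theorem together with the metabelian structure of $P^0$ (namely $1 \to J \to P^0 \xrightarrow{\pi} L^0 \to 1$ with $J$ and $L^0$ abelian). First I would record the easy inclusions and the Lie-theoretic dictionary. Since $Z = \R\tilde V$ is central in $G$ it centralises $\Gamma^0$, and being connected it lies in $C^0$; likewise $\Gamma^0$ is abelian, so $\Gamma^0 \subseteq Z(\Gamma^0)$. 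As $J$ is a normal abelian subgroup inside the nil-radical, conjugation descends to a representation $\rho\colon L^0 \to \GL(\mathfrak{j})$ with $\Ad(\gamma)|_{\mathfrak{j}} = \rho(\pi(\gamma))$, and the Lie algebra of the centraliser is $\mathfrak{c}^0 = \{X \in \mathfrak{p}^0 : \Ad(\gamma)X = X\ \forall \gamma \in \Gamma^0\}$. In the non-periodic case $\pi(\Gamma^0)$ is dense in $L^0$, so $\mathfrak{c}^0 \cap \mathfrak{j} = \mathfrak{j}^{\Gamma^0} = \mathfrak{j}^{L^0}$ is a linear subspace; hence $J^{L^0} = \exp(\mathfrak{j}^{L^0})$ is connected and, centralising $\Gamma^0$, is contained in $C^0$.

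The heart of the argument is \emph{transversality}, i.e. surjectivity of $\pi|_{C^0}\colon C^0 \to L^0$, equivalently that every $\bar X \in \mathfrak{l}^0$ admits a $\Gamma^0$-fixed lift to $\mathfrak{p}^0$. Lifting $\bar X$ to some $X_0$ produces a $1$-cocycle $\gamma \mapsto \Ad(\gamma)X_0 - X_0 \in \mathfrak{j}$, and the obstruction to correcting $X_0$ by a vector of $\mathfrak{j}$ into a fixed one is its class in $H^1(\Gamma^0,\mathfrak{j})$ for the module structure $\rho\circ\pi$. Here I would use that $\mathcal V$ is transversally Riemannian with compact leaf-closures: by Carri\`ere's theorem \cite{Car1,Car2} the $\overline{\tilde{\mathcal V}}$-leaves are tori on which $\mathcal V$ is a minimal linear flow, so the transverse holonomy, hence $\rho(\pi(\Gamma^0))$, is relatively compact. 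This makes $\mathfrak{j}$ a completely reducible $\Gamma^0$-module $\mathfrak{j} = \mathfrak{j}^{L^0}\oplus \mathfrak{j}'$, with $\Gamma^0$ acting on $\mathfrak{j}'$ with no non-zero fixed vector (so the $\mathfrak{j}'$-component of every cocycle is a coboundary), while the compactness of the leaf forces the residual ``drift'' cocycle valued in $\mathfrak{j}^{L^0}$ to vanish as well. Thus the obstruction dies and $\pi|_{C^0}$ is onto. I expect this step --- matching the algebraic vanishing of the obstruction with the geometric fact that the orbit closes up into a compact torus --- to be the main difficulty.

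Granting transversality, the remaining assertions follow formally. For $\Gamma^0 \subseteq C^0$: given $\gamma \in \Gamma^0$, surjectivity provides $c \in C^0$ with $\pi(c) = \pi(\gamma)$, so $\gamma c^{-1} \in J$; since $\gamma$ and $c$ both centralise $\Gamma^0$, so does $\gamma c^{-1}$, whence $\gamma c^{-1} \in J^{\Gamma^0} = J^{L^0} \subseteq C^0$ and $\gamma = (\gamma c^{-1})c \in C^0$. Transversality also yields $P^0 = J\cdot C^0$; combining this with $Z \subseteq C^0$ and $J = ZI$, a direct manipulation (using normality of $J$ and centrality of $Z$) gives $C^0 xI = P^0 xI$ for every $x \in G$, i.e. $C^0$ acts transitively on each $\overline{\tilde{\mathcal V}}$-leaf.

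Finally, $C^0$ is a connected solvable group --- a subgroup of the metabelian $P^0$ --- and by the transitive action on the compact torus leaf the abelian lattice $\Gamma^0 \cong \Z^k$ sits cocompactly in $C^0$ modulo the central stabiliser contained in $J^{L^0}$. The structure theory of connected solvable Lie groups admitting an abelian cocompact lattice then forces $C^0$ to be abelian: one checks that $\mathfrak{j}^{L^0}$ is central in $\mathfrak{c}^0$ (because $\ad(X)$ acts on $\mathfrak{j}$ through $d\rho(\pi(X))$, which annihilates $L^0$-fixed vectors), so $[\mathfrak{c}^0,\mathfrak{c}^0] \subseteq \mathfrak{j}^{L^0}$ is central, and the torus picture rules out any non-trivial commutator. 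This identifies $C^0$ as the desired abelian syndetic hull $H(\Gamma^0,P^0)$ and completes the proof of the proposition.
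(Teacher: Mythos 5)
Your overall strategy --- realizing $C^0$ as a syndetic hull by splitting the extension $1\to J\to P^0\to L^0\to 1$ over $\Gamma^0$ via a cohomological obstruction in $H^1(\Gamma^0,\mathfrak{j})$ --- is genuinely different from the paper's, but as written it has two gaps, located precisely at the step you yourself flag as ``the main difficulty''. First, the claimed relative compactness of $\rho(\pi(\Gamma^0))$ on $\mathfrak{j}$ does not follow from Carri\`ere's theorem: the transversally Riemannian structure of $\mathcal V$ (and the torus/minimal-linear-flow picture) controls the holonomy action on the \emph{transversal}, i.e.\ on $\mathfrak{g}/\mathfrak{j}\cong\mathrm{Lie}(Q)$, not the adjoint action of $L^0$ on $\mathfrak{j}$ itself. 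The paper makes no such compactness assumption --- indeed its Lemma \ref{existence_one_parameter} explicitly allows the $L^0$-action on $J$ to have eigenvalues with non-zero real part --- so without a separate argument your appeal to complete reducibility of $\mathfrak{j}$ as a $\Gamma^0$-module is unfounded. Second, and more seriously, even granting a decomposition $\mathfrak{j}=\mathfrak{j}^{L^0}\oplus\mathfrak{j}'$, the component of the obstruction lying in $H^1(\Gamma^0,\mathfrak{j}^{L^0})=\mathrm{Hom}(\Gamma^0,\mathfrak{j}^{L^0})$ has no formal reason to vanish (with $\Gamma^0\cong\Z^k$ this group is large), and ``the compactness of the leaf forces the residual drift cocycle to vanish'' is an assertion, not an argument: a non-trivial class here is exactly the Heisenberg-type central extension of $L^0$ by $\mathfrak{j}^{L^0}$ that would make $C^0$ fail to surject onto $L^0$, and it would equally defeat your final step, where $[\mathfrak{c}^0,\mathfrak{c}^0]\subseteq\mathfrak{j}^{L^0}$ being central does not by itself force $C^0$ to be abelian. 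This is the crux of the proposition and it is left unproved.

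For comparison, the paper circumvents both issues by a genericity argument: it identifies $T=J\cap Z(P^0)$ as the fixed set $Z_J(l)$ for every $l$ outside finitely many proper closed subgroups of $L^0$, so by density of $\pi(\Gamma^0)$ there exist ``generic'' $\gamma\in\Gamma^0$ with $Z_J(\gamma)=T$; after quotienting by $T$, a generic $\gamma$ lies on a one-parameter subgroup (Lemma \ref{existence_one_parameter}) which is \emph{unique} given its projection to $L^0$, whence anything centralizing $\gamma$ centralizes that whole subgroup, the one-parameter subgroups through generic elements pairwise commute, and they span an abelian transversal $A$ which is then shown to contain $\Gamma^0$ by a generation-and-density argument. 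To salvage your cohomological route you would need to supply the missing vanishing of the drift component, most plausibly by importing exactly this kind of genericity input; as it stands the proposal does not establish transversality.
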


\begin{definition}[Syndetic hull] As $C^0$ is a connected abelian Lie group and 
$\Gamma^0 \subset C^0$ is discrete, there exists a unique subgroup 
of $C^0$  containing $\Gamma^0$ and in which 
$\Gamma^0$ is a lattice. Since it  was defined by means of $P^0$, it will be 
denoted by $H(\Gamma^0, P^0)$ and called the syndetic hull of $\Gamma^0$ 
in $P^0$.
\end{definition}

\begin{remark}  
In the nilpotent case, there is a construction of a (unique)  Malcev envelope  which      associates to a discrete 
subgroup (say of finite type) a syndetic hull where it is a lattice \cite{Rag}. In our case here, 
$P^0$ is solvable,  a situation where the construction does not extend. Actually, 
contrary to ``easy  life'' in  semi-simple or nilpotent groups, 
in solvable groups and worse, in semi-direct products of type compact by solvable, 
there is no way to fill in discrete sub-groups, 
even Borel density fails for    lattices 
(compare with  \cite{Witte})!
\end{remark}

\begin{proof}
We have an exact sequence $1 \to J \to P^0 \to L^0 \to 1$. Since $J$ is abelian, the $P^0$-action by conjugacy on $J$ reduces to a $L^0$-action (on $J$). For $l \in L^0$, let $Z_J(l)$ be  the fixed point set of its action on $J$. The trivial factor 
of the $L^0$ representation is thus $T= \cap_{l \in L^0} Z_J(l)$.  So $T$ is the intersection of 
$J$ with the center of $P^0$. 
For a generic $l$, $Z_J(l) = T$. More precisely, there is a finite set of (proper and closed) subgroups
$L_1^0, \ldots, L_k^0$, such that if $l \notin \cup_i L_i^0$, then $Z_J(l) = T$.
Similarly, one defines $Z_J(p)$ for $p \in P^0$, specially 
for $p = \gamma \in \Gamma^0$.  Since $\pi(\Gamma^0)$ is dense in $L^0$, for 
$\gamma$ ``generic'', $Z_J(\gamma) = T$. 
By taking the quotient $P^0 / T$, we can assume $T = 1$ (observe, however, that  by doing this, i.e. passing to the quotient space, $\Gamma^0$
could become non-discrete).

\medskip

Let $\gamma$ be generic and assume it belongs to two one parameter groups $h_1(t)$ and $h_2(t)$ having the same projection in $L^0$, that is $\pi(h_1(t)) = \pi(h_2(t))$.  On the one hand,  we have $\pi(h_1(t) h_2^{-1}(t)) = 1$, and thus  $h_1(t) h_2^{-1}(t)$ belongs  to $J$. On the other hand, $h_1(t) h_2^{-1}(t)$ commutes with $\gamma$. This implies $h_1(t) = h_2(t)$. 
Consider now  $c \in P^0$  commuting with $\gamma$. For any one parameter group $h_1(t)$ containing $\gamma$, 
$h_2(t) = Ad_c (h_1(t))$ is another one having the same projection in $L^0$ (since $L^0$ is
abelian, and hence the $Ad_c$ -action on it  is trivial).  This implies that $h_1 = h_2$, that is,  $c$ commutes with any  element   $h_1(t)$.

From all this, we infer that if  $\gamma$ and $\gamma^\prime$ are  two generic elements of $\Gamma^0$, then any one parameter groups 
$h(t) $ and $h^\prime(t)$ containing $\gamma$ and $\gamma^\prime$, respectively, commute. 
Regarding existence of such one parameter groups $h(t)$, 
let $K$ be a one parameter group in 
(the abelian) $L^0$ containing $\pi (\gamma)$,  and consider  $\pi^{-1}(K)$, which is  connected 
(since $J$ is connected), and 
contains $\gamma$. Then $\pi^{-1}(K)$ is a semi-direct product $\R \ltimes \R^k$ ($L^0 \cong \R^k$).  Let us refer to Lemma \ref{existence_one_parameter}  below, for the proof that generic elements of such a semi-direct product can be reached by  one parameter groups. 
\medskip

Now,  let  $\gamma_1, \ldots, \gamma_d$ be generic  elements of $\Gamma^0$ contained in one parameter groups
$h_1(t),.\\..,h_d(t)$, such that the derivatives $h_1^\prime(0), \ldots, h_d^\prime (0)$ generate
a subspace of the Lie algebra of $P^0$ of dimension 
$d = \dim L^0$. They determine an abelian group $A$ transversal to $J$ and centralizing $\Gamma^0$.  
Let us prove that
$A$ contains $\Gamma^0$.  To be precise on the significance of  genericity of  elements of $\Gamma^0$, 
let us denote by $U$ the open dense set  $ L^0 -  \cup_i L_i^0$, where the $L_i^0$ are  the subgroups defined above (where   the centralizer does not achieve the minimal dimension). Then,  $\gamma$ is generic if $\pi(\gamma) \in U$. 
Therefore,    $B = \pi^{-1}(U) \cap \Gamma^0  $  is contained in $A$.  \\
The subgroup $D$ generated 
by  $B$ is contained in $A \cap \Gamma^0$ and the projection  $\pi$ sends  $D$ to  the subgroup
generated by 
$\pi(\Gamma^0) \cap U$.  
But the last subgroup equals $\pi(\Gamma^0)$, since 
$\pi(\Gamma^0)$ is dense (in $ L^0)$ and $U$ is open (say, if $X$ is a dense subgroup of $\R^d$ and $U$ is open in $\R^d$, then for 
$a, b \in X \cap U$ close, the translated $X \cap U - b$ is a neighbourhood of $0 $ in $X$, and hence $X \cap U$ generates $X$).  
It follows that $\pi $ sends $D$ surjectively 
onto $\pi (\Gamma^0)$. But $\pi$ maps bijectively $\Gamma^0$ to $\pi(\Gamma^0)$, and $D \subset \Gamma^0$ which implies that   $D = \Gamma^0$, that is $\Gamma^0 \subset A$.

Now, if the trivial factor $T$ was not trivial, then $C^0$, the identity component of the centralizer
of $\Gamma^0$ is exactly $A T$.  It satisfies all the claimed properties.
\end{proof}

In order to finish the proof, we need:

\begin{lemma} \label{existence_one_parameter} Let $K$  be a semi-direct product 
$ \R \ltimes \R^d$. Then, any generic element of \textcolor{red}{$K$} belongs to a one parameter group. More precisely, assume
that $\R$ acts on $\R^d$ via a representation 
$ t \to e^{t a}$ with  $a \in \mathfrak{gl}(d, \R)$.
Let $\lambda_1, \ldots, \lambda_l$ be the purely imaginary eigenvalues of  $a$. If $t \notin \cup _i        \frac{ 2 \pi}{\lambda_i}  \sqrt{-1}\Z $, then, for any $v \in \R^d$,    $(t, v)$ belongs to some one parameter group of $K$.  
\end{lemma}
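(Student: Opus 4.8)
The plan is to make the exponential map of $K$ explicit and thereby reduce the assertion to the invertibility of a single $d \times d$ matrix. I would write $\k = \R b \oplus \R^d$ for the Lie algebra of $K$, where $b$ generates the $\R$-factor and the bracket is $[b, w] = aw$ for $w \in \R^d$ (the infinitesimal form of the action $t \mapsto e^{ta}$, which one reads off from $\Ad_{(s,0)}(0,w) = (0, e^{sa}w)$). A one-parameter subgroup is $\tau \mapsto \exp(\tau X)$ with $X = \beta b + w$; writing $\gamma(\tau) = (c(\tau), \phi(\tau))$ and imposing the homomorphism property gives at once $c(\tau) = \tau\beta$, while differentiating the cocycle relation $\phi(\tau + \sigma) = \phi(\tau) + e^{\tau\beta a}\phi(\sigma)$ in $\sigma$ at $0$ yields $\phi'(\tau) = e^{\tau\beta a}w$. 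Integrating, the target formula is
\[
\exp(\beta b + w) = \bigl(\beta,\ M(\beta)\,w\bigr), \qquad M(\beta) := \int_0^1 e^{s\beta a}\, ds .
\]

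With this formula in hand, the reduction is immediate. To realize a prescribed $(t, v)$ as $\exp(X)$ one is forced to take $\beta = t$ and to solve $M(t)\,w = v$ for $w \in \R^d$; this is solvable for every $v$ exactly when $M(t) \in \mathfrak{gl}(d,\R)$ is invertible, in which case $(t, v)$ sits on the one-parameter group $\tau \mapsto \exp\bigl(\tau(t\,b + M(t)^{-1}v)\bigr)$. So the entire lemma comes down to deciding for which $t$ the matrix $M(t)$ is invertible.

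For the spectral step I would observe that $M(t) = f(ta)$, where $f(z) = \sum_{k \geq 0} \frac{z^k}{(k+1)!} = \frac{e^z - 1}{z}$ is entire (with $f(0) = 1$). By the spectral mapping theorem over $\C$, the eigenvalues of $M(t) = f(ta)$ are precisely the numbers $f(t\nu)$ as $\nu$ ranges over the eigenvalues of $a$. Since $f(z) = 0$ iff $e^z = 1$ and $z \neq 0$, i.e. $z \in 2\pi\sqrt{-1}\,\Z \setminus \{0\}$, the matrix $M(t)$ is singular iff $t\nu \in 2\pi\sqrt{-1}\,\Z \setminus \{0\}$ for some eigenvalue $\nu$. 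As $t$ is real, $t\nu$ can be nonzero purely imaginary only when $\nu$ is purely imaginary and nonzero, that is $\nu = \lambda_i$ for some $i$; and then the obstruction reads $t \in \frac{2\pi}{\lambda_i}\sqrt{-1}\,\Z$. This is exactly the excluded set in the hypothesis, so under the stated assumption $M(t)$ is invertible and the desired $w$ exists.

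The calculation is routine apart from this last step: the only points requiring care are justifying that $M(t)$ is a holomorphic function of $ta$ so that spectral mapping applies, and checking that the algebraic condition $t\lambda_i \in 2\pi\sqrt{-1}\,\Z\setminus\{0\}$ translates into membership of the set $\cup_i \frac{2\pi}{\lambda_i}\sqrt{-1}\,\Z$ appearing in the statement. I would also remark that the borderline value $t = 0$, which the hypothesis excludes, in fact causes no problem, since $M(0) = \mathrm{Id}$ is invertible and the conclusion holds there as well.
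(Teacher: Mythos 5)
Your proof is correct, and it arrives at exactly the same crux as the paper: the invertibility of the ``divided exponential'' matrix $M(t)=\int_0^1 e^{s t a}\,ds=f(ta)$ with $f(z)=(e^z-1)/z$, which is, up to the harmless scalar factor $t$, the paper's matrix $B_a(t)=a^{-1}(e^{ta}-1)$. The route there is genuinely different in packaging, though. The paper embeds $\k$ into $\mathfrak{aff}(\R^{d})$, realizes one-parameter subgroups as flows of the affine vector fields $x\mapsto a(x)+u$, computes the time-$t$ flow as $x\mapsto e^{ta}x+B_a(t)u$, and must then deal with the fact that when $e^{ta}$ is periodic the group $K$ is only a cyclic cover of its image in $\Aff(\R^{d})$, lifting the conclusion back at the end. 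You instead compute $\exp_K$ intrinsically from the cocycle ODE $\phi'(\tau)=e^{\tau\beta a}w$ and never leave $K$, so the linearity/covering caveat simply does not arise --- a small but real simplification. For the singularity analysis the paper reduces to a single Jordan block and splits into the cases $\lambda=0$ and $\lambda\neq 0$, while you invoke the spectral mapping theorem for the entire function $f$; these are interchangeable, and your observation that for real $t$ the condition $t\nu\in 2\pi\sqrt{-1}\,\Z\setminus\{0\}$ forces $\nu$ to be a nonzero purely imaginary eigenvalue is precisely the content of the paper's case split. Your closing remark that $t=0$ is harmless (since $M(0)=\mathrm{Id}$) is a point the paper leaves implicit.
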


\begin{proof} 
The  Lie algebra $\mathfrak{k}$ of $K$, 
is generated by one element $Y$ and $\R^d$, with non-vanishing brackets 
$[Y, u ]= a(u)$, for $u \in \R^d$. It embeds in $\mathfrak{aff}(\R^d) = 
\mathfrak{gl}(d, \R)  \ltimes  \R^d$, the Lie algebra of the affine group
$\Aff(\R^d)$, by sending $Y $ to $a$ and $\R^d$ to $\R^d$. Let
$K^\prime$  be the  Lie subgroup of $\Aff(\R^d)$ determined by this embedding. Then, $K$ is isomorphic to $K^\prime$, unless $e^{ta}$ is periodic:  there exists $t_0  \neq 0$ such that $e^{t_0 a} = 1$.   In this case,
$K$ will be the (cyclic) universal cover of $K^\prime$. Such $K$ is a non-linear group, i.e. cannot be injectively 
embedded in any $\GL(n, \R)$. Their prototype is the Euclid group 
$\mathsf{Euc}_2 = \SO(2) \ltimes \R^2$. Actually, once treating $K^\prime$, we can lift to $K$, and hence we will 
assume we are not in this   periodic case.

Consider now the map  $\phi: (t, x) \in  \R \times \R^d \to \phi^t( x) = e^{ta}x + (e^{ta}u - u) \in \R^d$. 
One checks that $t \to \phi^t $  is a one parameter group 
in $\Aff(\R^d)$. Its infinitesimal generator is the vector field $x \to X(a) = a (x) + a(u)$. Accordingly, 
the vector field $x \to a(x) + u$ has a flow 
$(t, x)  \to  e^{ta}x + a^{-1}(e^{ta}u - u) $, where 
here $a^{-1}(e^{ta} -1) $ is understood to be equal to  $B_a(t) =  1 + \frac{t^2}{2!}a +  \frac{t^3}{3!}a^2 + \ldots $.
Let us see when $B_a(t)$ is not surjective. By means of a  Jordan decomposition, we reduce the question to the case where $a$ has a unique (complex) eigenvalue $\lambda$, that is $(a-\lambda 1) ^d = 0$. 
First, if $\lambda = 0$, then $a$ is nilpotent and $B_a(t)$ is surjective for any $t$. 
If $\lambda \neq 0$, then $a^{-1}$ exists and $B_a(t) = a^{-1}(e^{ta} -1)$. Thus, if $\det\, B_a(t) = 0$, then 
$e^{t\lambda}= 1$;  in particular,  $\lambda \in  \sqrt{-1} \R  $ and $t \in    \frac{ 2 \pi}{\lambda}  \sqrt{-1}\Z$ .
In sum, let $\lambda_1, \ldots, \lambda_l$ be the purely imaginary eigenvalues of  $a$. If $t \notin \cup _i  \frac{ 2 \pi}{\lambda_i}  \sqrt{-1}\Z $, then, for any $v \in \R^d$,    $(t, v)$ belongs to some one parameter group. 
\end{proof}

\subsection{$\Gamma^0$ seen  in the identity component of its centralizer  in $G$}

\begin{proposition}  \label{C^000} Remember the definition of $C^0$ (in Proposition 
\ref{syndetic_hull}) and that of the syndetic hull $H(\Gamma^0, P^0)$. 
Define the following subgroups of $G$:

- $C^{00}$: the identity component of the centralizer of $\Gamma^0$ in $G$.

- $C^{000}$: the identity component of the  center of $C^{00}$.

Then $C^{000}$ contains $H(\Gamma^0, P^0)$ and in particular acts transitively on the $\overline{\tilde{\mathcal V}}$-leaves. 

In fact, $H (\Gamma^0, P^0)$ is  also the syndetic hull of $\Gamma^0$ in $C^{000}$.

\end{proposition}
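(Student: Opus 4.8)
The plan is to prove the single inclusion $H:=H(\Gamma^0,P^0)\subseteq C^{000}$; granted this, both remaining assertions become formal. First I would record that $C^0\subseteq C^{00}$: since $P^0\subseteq G$ one has $Z_{P^0}(\Gamma^0)=Z_G(\Gamma^0)\cap P^0$, so the identity component $C^0$ of the former is a connected subgroup of $Z_G(\Gamma^0)$ through the identity, hence contained in its identity component $C^{00}$. Therefore $H\subseteq C^0\subseteq C^{00}$, and because $C^{00}$ centralizes $\Gamma^0$ by definition, $\Gamma^0\subseteq Z(C^{00})$. Thus the statement reduces to showing that $H$ is \emph{central} in $C^{00}$: being connected, a central $H$ automatically lies in the identity component $C^{000}$ of $Z(C^{00})$, and then $\Gamma^0\subseteq H\subseteq C^{000}$ comes for free.

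The heart of the matter is that $\Ad(c)$ acts trivially on $\mathrm{Lie}(H)$ for every $c\in C^{00}$. I would first observe that $H$ is a \emph{vector group}: by the earlier Facts $J$ is simply connected, and $L^0\cong\R^d$ is simply connected as the leaf space $\tilde{F}/\tilde{\mathcal V}$ lifted to the universal cover; hence the exact sequence $1\to J\to P^0\to L^0\to 1$ exhibits $P^0$ as a simply connected solvable Lie group. Consequently every connected subgroup of $P^0$ is closed and simply connected, so $H\subseteq C^0$ is a simply connected abelian group, i.e. $H\cong\R^k$ with $\exp\colon\mathrm{Lie}(H)\to H$ a diffeomorphism. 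Now fix $c\in C^{00}$. Since $c$ commutes with each $\gamma\in\Gamma^0$ we have $\exp(\Ad(c)\log\gamma)=c\gamma c^{-1}=\gamma=\exp(\log\gamma)$, and injectivity of $\exp$ forces $\Ad(c)\log\gamma=\log\gamma$. As $\Gamma^0$ is a cocompact lattice in $H\cong\R^k$, the vectors $\{\log\gamma:\gamma\in\Gamma^0\}$ span $\mathrm{Lie}(H)$ over $\R$, whence $\Ad(c)|_{\mathrm{Lie}(H)}=\mathrm{id}$ and $c$ centralizes $H=\exp(\mathrm{Lie}(H))$. This proves $H\subseteq Z(C^{00})$, and since $H$ is connected, $H\subseteq C^{000}$.

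It remains to deduce the two closing claims. The group $C^{000}$, being the identity component of a center, is a connected abelian Lie group containing $\Gamma^0$ as a cocompact lattice in the connected subgroup $H$; by the uniqueness built into the definition of the syndetic hull, now applied inside $C^{000}$, the group $H$ is also the syndetic hull of $\Gamma^0$ in $C^{000}$. For transitivity on the $\overline{\tilde{\mathcal V}}$-leaves it suffices that $H$ itself acts transitively, since $H\subseteq C^{000}$. The leaf through $\tilde p$ is $P^0\tilde p\cong P^0/I$ with $I\subseteq J$, so $H$ acts transitively precisely when $\pi(H)=L^0$; but $\pi(H)$ is a connected subgroup of the vector group $L^0\cong\R^d$ containing the dense subgroup $\pi(\Gamma^0)$, hence a closed subspace that is dense, i.e. all of $L^0$. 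The main obstacle throughout is the control of compact directions: had $H$ possessed a toral factor, the condition $\Ad(c)\gamma=\gamma$ would only pin down the $\R$-directions of $\mathrm{Lie}(H)$ and leave the torus directions free, so centrality of $H$ could genuinely fail; it is exactly the simple connectedness of $J$ and $L^0$ that excludes this. One can also phrase the remedy intrinsically, without invoking vector-group structure: for fixed $\gamma$ the map $c\mapsto\Ad(c)\log\gamma$ is continuous into the discrete coset $\log\gamma+\ker(\exp|_{\mathrm{Lie}(H)})$, hence constant on the connected group $C^{00}$, giving $\Ad(c)\log\gamma=\log\gamma$ again.
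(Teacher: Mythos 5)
Your overall strategy is the paper's: reduce everything to showing that $C^{00}$ centralizes $H=H(\Gamma^0,P^0)$, then read off $H\subseteq C^{000}$ and the two remaining claims. The closing deductions (spanning $\mathrm{Lie}(H)$ by the lattice, uniqueness of the syndetic hull inside $C^{000}$, and transitivity via $\pi(H)=L^0$) are fine and in fact more detailed than the published proof. But the central step has a genuine gap. You write $\exp(\Ad(c)\log\gamma)=c\gamma c^{-1}=\gamma=\exp(\log\gamma)$ and invoke injectivity of $\exp|_{\mathrm{Lie}(H)}$ to conclude $\Ad(c)\log\gamma=\log\gamma$. That injectivity is only usable if $\Ad(c)\log\gamma$ already lies in $\mathrm{Lie}(H)$, i.e.\ if $c$ normalizes $H$ --- which is precisely what you have not shown. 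A priori $\Ad(c)\log\gamma$ is just an element of $\g$, and the global exponential $\exp_G:\g\to G$ has no reason to be injective: $G$ may contain compact semisimple or toral factors (the paper lists exactly this as one of the difficulties of the section). Your ``intrinsic'' variant at the end has the same defect, since the fibre $\exp_G^{-1}(\gamma)$ is a discrete coset only inside $\mathrm{Lie}(H)$, not inside $\g$. Nor can you get $cHc^{-1}=H$ for free from abstract uniqueness: connected abelian subgroups of a Lie group containing a given discrete subgroup as a cocompact lattice need not be unique (e.g.\ $\Z(1,\alpha)\subset\R\times\mathbb{T}$, $\alpha$ irrational, is a cocompact lattice both in a one-parameter subgroup and in the whole group); uniqueness holds only inside the vector group $C^0$, so you must first know that conjugation by $c$ stays there.

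The missing ingredient is supplied by the paper in three lines, and should be inserted before your computation: since $c$ centralizes $\Gamma^0$ and $J$ is normal in $G$, $c$ normalizes $J\Gamma^0$ and hence its closure $P^0=\overline{J\Gamma^0}$; consequently $c$ preserves $C^0$ (the identity component of the centralizer of $\Gamma^0$ \emph{in} $P^0$, a group canonically attached to the pair $(\Gamma^0,P^0)$) and therefore also the syndetic hull $H\subseteq C^0$, which is canonically determined inside the vector group $C^0$ as the exponential of $\mathrm{span}_\R\{\log\gamma:\gamma\in\Gamma^0\}$. Once $\Ad(c)$ is known to preserve $\mathrm{Lie}(H)$, your injectivity-plus-spanning argument goes through verbatim, and the rest of your proof is correct.
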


\begin{proof} 	
	If $g \in G$ centralizes $\Gamma^0$, then it normalizes $J \Gamma^0$ (since $J$ is normal) and hence 
	normalizes $P^0 = \overline{J \Gamma^0}$.  It follows that the centralizer of $\Gamma^{0}$, and hence 
	also its identity component $C^{00}$, normalizes $P^0$. But then, $C^{00}$ preserves $C^0$ too,  since 
	the latter is defined as the identity component of $Z (\Gamma^0)$ in $P^0$. It also preserves 
	the syndetic hull $H(\Gamma^0, P^0)$. But, since it centralizes the lattice 
	$\Gamma^0$ in $H(\Gamma^0, P^0)$, $C^{00} $   acts trivially on $H(\Gamma^0, P^0)$.  That is 
	$H(\Gamma^0, P^0) \subset C^{000}$, and hence $H(\Gamma^0, P^0) = H(\Gamma^0, C^{000})$.
\end{proof}

\section{End of the  Proof of Theorem \ref{dynamics}}
\label{end of proof}

So far, we fixed a leaf $F$ and considered its $\mathcal V$ and $\overline{\mathcal V}$-foliations. Now, we consider 
a small  curve $p: u \in [0, 1]  \to p_u$ transversal to $\mathcal F$.   This leaf $F_u = \mathcal F (p_u)$ comes with its
associated foliation $\overline{\mathcal V}$. There is, a priori, no obvious continuity or even semi-continuity 
of  $\overline{\mathcal V}$. In other words,  $\overline{\mathcal V}$ is a foliation on each leaf but not a 
foliation of $M$. 

We  have in particular groups  $I_u$, the stabilizers of $\tilde{p}_u$, and 
$J_u$ the stabilizers of    $\tilde{\mathcal V}(\tilde{p}_u)$. These two groups depend continuously on $u$.

There is also
$P^0_u$ and  $\Gamma^0_u$ associated to $F_u$.  Again,  a priori, they satisfy   no  obvious continuity, or 
even semi-continuity   on $u$.

\begin{fact} Let $u_1, u_2$ be two parameters $\in [0, 1]$, $P^0_{u_1}, P^0_{u_2}$, $\Gamma^0_{u_1}$ and 
	$\Gamma^0_{u_2}$ their associated groups.  If $\Gamma^0_{u_1}= \Gamma^0_{u_2} = \Gamma^0$, 
	then $\Gamma^0$ has the same syndetic hull in $P^0_{u_1}$ and $P^0_{u_2}$.   In particular 
	this syndetic hull acts transitively on the $\overline{\tilde{\mathcal V}}$-leaves of both 
	$F_{u_1}$ and $F_{u_2}$.
	
\end{fact}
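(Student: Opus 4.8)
The plan is to reduce the equality of the two syndetic hulls to the \emph{intrinsic} description of a syndetic hull obtained in Proposition~\ref{C^000}, and then to read off the transitivity from the single-leaf analysis of Section~\ref{closure foliation}. By hypothesis $\Gamma^0_{u_1}=\Gamma^0_{u_2}=\Gamma^0$ is a single subgroup of the group $G=\Iso^0(\tilde M,\tilde g,\tilde V)$, and $G$ is itself independent of the transversal parameter $u$. From this one datum I form, exactly as in Proposition~\ref{C^000}, the group $C^{00}$, the identity component of the centralizer of $\Gamma^0$ in $G$, and then $C^{000}$, the identity component of the center of $C^{00}$. The decisive observation is that neither construction refers to $P^0_u$, $I_u$, $J_u$ or the leaf $F_u$: both depend only on $\Gamma^0$ and on the fixed $G$. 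Hence one and the same group $C^{000}$ is associated to both $u_1$ and $u_2$.

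Next I apply Proposition~\ref{C^000} separately at $u_1$ and at $u_2$. For each $u\in\{u_1,u_2\}$ it gives $H(\Gamma^0,P^0_u)=H(\Gamma^0,C^{000})$, where the right-hand side is unambiguous by the uniqueness built into the Definition of the syndetic hull (the unique connected subgroup in which $\Gamma^0$ is a cocompact lattice). Since this right-hand side is one and the same group, I conclude
\[
 H(\Gamma^0,P^0_{u_1}) = H(\Gamma^0,C^{000}) = H(\Gamma^0,P^0_{u_2}),
\]
which is the asserted equality. Write $H$ for this common syndetic hull.

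For the transitivity clause I would fix $u\in\{u_1,u_2\}$ and invoke the structure established for a single leaf. By Carrière's theorem \cite{Car1,Car2}, applied in the transversally parallelizable setting of Section~\ref{closure foliation}, each $\overline{\mathcal V}$-leaf of $F_u$ is a torus $T_u$ carrying a minimal linear $\mathcal V$-flow, whose connected lift $\ell_u$ to $\tilde F_u$ is the $P^0_u$-orbit of $\tilde p_u$, with $\Gamma^0=\pi_1(T_u)$ acting as the deck group of $\ell_u\to T_u$. The syndetic hull $H\subset C^0_u$ contains $\Gamma^0$ as a cocompact lattice, so $H\cong\R^{\dim T_u}$ and $H/\Gamma^0$ is the structural torus of Carrière's theorem; it is exactly the universal cover of the transitive torus action on $T_u$, and therefore acts transitively (indeed simply transitively) on $\ell_u$. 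This is the refinement, to $H$ itself, of the transitivity already recorded for $C^0_u\supset H$ in Proposition~\ref{syndetic_hull}. As the identity $H(\Gamma^0,P^0_{u_i})=H$ holds for both $i=1,2$, the single group $H$ acts transitively on the $\overline{\tilde{\mathcal V}}$-leaves of $F_{u_1}$ and of $F_{u_2}$.

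The main obstacle is conceptual rather than computational: a priori there is no way to compare $H(\Gamma^0,P^0_{u_1})$ and $H(\Gamma^0,P^0_{u_2})$ directly, since the two ambient solvable groups $P^0_u$ move with $u$ and, as emphasized at the start of Section~\ref{end of proof}, admit no continuity or even semicontinuity in $u$; moreover, in the solvable (and compact-by-solvable) setting there is no Malcev-type uniqueness one could naively invoke. Everything hinges on having already traded the extrinsic object $H(\Gamma^0,P^0_u)$ for the intrinsic one $H(\Gamma^0,C^{000})$ in Proposition~\ref{C^000}, which is precisely what eliminates the dependence on $u$.
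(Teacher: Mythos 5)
Your proof is correct and is essentially the paper's own argument: the paper's proof is precisely to apply Proposition~\ref{C^000} and note that $C^{000}$ depends only on $\Gamma^0$ (and the fixed $G$), so the two syndetic hulls both equal $H(\Gamma^0,C^{000})$. You merely spell out the uniqueness of the hull inside the connected abelian group $C^{000}$ and the transitivity on $\overline{\tilde{\mathcal V}}$-leaves, both of which are already contained in Propositions~\ref{syndetic_hull} and~\ref{C^000}.
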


\begin{proof}  Apply Proposition \ref{C^000} and  observe that
	$C^{000}$ is the same for $u_1$ and $u_2$ (it is defined by means of $\Gamma^0$ only). 
\end{proof}

\begin{fact}  We can assume that for a dense set $D \subset [0, 1]$ of parameters $u$, $\Gamma_u^0$ is a constant 
	$\Gamma^0$.
\end{fact}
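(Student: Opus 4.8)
The plan is to observe that, once the preceding reductions are in place, the only datum that genuinely varies with $u$ is a subgroup of a single fixed discrete group, and then to combine a counting argument with the Baire category theorem. First I would record the fixed group: by Proposition \ref{algebraic.description} the core is a trivial fibration over $[0,1]$, so $\Gamma=\pi_1(M)=\pi_1(F_u)$ is one and the same finitely generated (hence countable) subgroup of $G$ for every $u$, and this is precisely what makes the statement meaningful. Consequently each $\Gamma^0_u=\Gamma\cap P^0_u$ is a subgroup of this one fixed $\Gamma$. Moreover, as established in Section \ref{closure foliation} and in Proposition \ref{C^000}, $\Gamma^0_u$ is abelian and sits as a discrete subgroup of the connected abelian Lie group $C^{000}_u$ (of $J_u$ in the periodic case); since a discrete subgroup of a connected abelian Lie group is finitely generated, each $\Gamma^0_u$ is a finitely generated subgroup of $\Gamma$.

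Next I would show that the family $\{\Gamma^0_u : u\in[0,1]\}$ is countable. This step is purely set-theoretic: in a countable group every finitely generated subgroup is generated by one of the countably many finite subsets, so a countable group has only countably many finitely generated subgroups. As each $\Gamma^0_u$ is such a subgroup of the countable group $\Gamma$, there are at most countably many distinct values, say $H_0,H_1,\dots$, giving a partition $[0,1]=\bigsqcup_n A_n$ with $A_n=\{u:\Gamma^0_u=H_n\}$.

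Finally I would invoke the Baire category theorem. Since $[0,1]$ is a complete metric space, it is not a countable union of nowhere dense sets, so some $A_{n_0}$ is not nowhere dense, i.e. is dense in a nonempty open subinterval $(a,b)$. Replacing $[0,1]$ by a compact subinterval of $(a,b)$ and reparametrizing produces the desired dense set $D:=A_{n_0}$ on which $\Gamma^0_u$ is the constant $\Gamma^0:=H_{n_0}$; crucially $C^{000}_u$ is then the \emph{same} group for all $u\in D$, because by Proposition \ref{C^000} it depends on $\Gamma^0_u$ alone. This reduction to a subinterval is harmless for the ultimate goal: equicontinuity of the flow of $V$, once obtained over any closed $V$-invariant sub-region of the core, propagates to all of $M$ by Facts \ref{criteria1} and \ref{criteria2}. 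I expect the main obstacle to be exactly this last point: Baire category only yields density inside a subinterval, and since $u\mapsto\Gamma^0_u$ reflects number-theoretic resonances in the continuously varying direction $J_u$, there is no reason for a single value to be dense in all of $[0,1]$; the correct move is therefore the legitimate passage to a subinterval rather than an attempt to force global density.
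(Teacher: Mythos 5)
Your proof is correct and is essentially the paper's argument: both rest on the finite generation of each $\Gamma^0_u$ (which you justify, and the paper leaves implicit, via its being a discrete subgroup of a connected abelian Lie group) together with a Baire category argument on the resulting countable partition of $[0,1]$ by the value of $\Gamma^0_u$, followed by the same harmless restriction to a subinterval renamed $[0,1]$. The only cosmetic difference is how the countability is organized: the paper uses word-metric balls $B(e,n)$ in $\Gamma$, applying Baire to the sets $X_n$ and then the finiteness of the image of $u \mapsto \Gamma^0_u \cap B(e,n_0)$, whereas you directly enumerate the countably many finitely generated subgroups of the countable group $\Gamma$ and apply Baire once.
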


\begin{proof}  
	Let $B(e, n)$ be the ball of radius $n$ and centered at the neutral element of $\Gamma = \pi_1(M)$, 
	with respect to 
	a word metric given by  some generating set.  Let $X_n = \{u \in [0, 1] \;  /  \; \Gamma^0_u \cap B(e, n)$ generates $\Gamma_u^0 \}$.  For $n \in \N$ fixed, the subsets $\Gamma^0_u \cap B(e,n), u \in [0,1]$, of $B(e,n)$ are in finite number.
	By Baire's Theorem, there exists $n_0$ such that $\overline{X_{n_0}}$ contains a non-trivial interval of parameters. 
	The map $u  \in  [0, 1] \mapsto \Gamma_u^0 \cap B(e,n_0)$, from $[0, 1]$  into subsets  of $B(e, n_0)$,  has a finite image. So there 
	is a level whose closure contains an interval. We will assume it is $[0, 1]$ itself.
\end{proof}

\begin{corollary} All the syndetic hulls $H(\Gamma^0, P_u)$ coincide, for $u \in D \subset [0, 1]$  a dense set of parameters,  
	say $H(\Gamma^0, P_u) = H$,  for $u \in D$.  In particular $H$  acts transitively on $\overline { \tilde{\mathcal V}}$-leaves
	of all points $\tilde{p}_u$, $u \in [0, 1]$.	
\end{corollary}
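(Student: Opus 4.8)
The plan is to combine the two Facts immediately preceding the statement. By the last of them we may fix a dense set $D\subset[0,1]$ on which $\Gamma^0_u$ is a constant subgroup $\Gamma^0=\Gamma\cap P^0_u$ of $\Gamma=\pi_1(M)$. For any pair $u_1,u_2\in D$ the Fact before it now applies verbatim: since $\Gamma^0_{u_1}=\Gamma^0_{u_2}=\Gamma^0$, Proposition \ref{C^000} identifies each of $H(\Gamma^0,P^0_{u_1})$ and $H(\Gamma^0,P^0_{u_2})$ with $H(\Gamma^0,C^{000})$, and $C^{000}$ — the identity component of the centre of the centraliser of $\Gamma^0$ in $G$ — is built out of $\Gamma^0$ alone, hence is the same group for $u_1$ and $u_2$. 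Consequently all the syndetic hulls $H(\Gamma^0,P_u)$, $u\in D$, coincide with one fixed connected abelian subgroup, which I call $H$. This settles the first assertion of the corollary.

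For the transitivity, note that the preceding Fact already supplies, for every single $u\in D$ (apply it with $u_1=u_2=u$), that $H=H(\Gamma^0,P^0_u)$ acts transitively on the $\overline{\tilde{\mathcal V}}$-leaf through $\tilde p_u$, i.e. that $H\tilde p_u=P^0_u\tilde p_u=\overline{\tilde{\mathcal V}}(\tilde p_u)$. Thus the statement is proved on the dense set $D$, and what remains is to promote it to an arbitrary parameter $u_0\in[0,1]$.

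To carry out that promotion I would exploit that $H$ is now a single, $u$-independent subgroup of $G=\Iso^0(\tilde M,\tilde g,\tilde V)$, and that in the co-homogeneity one case $G$ preserves each leaf $\tilde F_u$, so $H\tilde p_u\subset\tilde F_u$ and $\overline{H\tilde p_u}$ sits inside the closure foliation of $\tilde F_u$ for every $u$. Choosing $u_n\in D$ with $u_n\to u_0$ and $\tilde p_{u_n}\to\tilde p_{u_0}$, the orbits $H\tilde p_{u_n}=\overline{\tilde{\mathcal V}}(\tilde p_{u_n})$ converge; since the stabilisers $I_u$ and $J_u$ vary continuously in $u$ and the $\overline{\mathcal V}$-leaves are tori whose dimension is pinned by Carri\`ere's Theorem, one extracts $H\tilde p_{u_0}=\overline{\tilde{\mathcal V}}(\tilde p_{u_0})$.

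The main obstacle is exactly this last step. As was stressed at the opening of the section, neither $\overline{\mathcal V}$ nor the groups $P^0_u,\Gamma^0_u$ vary continuously — not even semicontinuously — in $u$, so transitivity on $D$ does not automatically spread to $[0,1]$. The saving feature is that the candidate group $H$ has been pinned down \emph{independently} of $u$ through $C^{000}$; the delicate point is then to rule out a sudden enlargement of the closure foliation at exceptional parameters, i.e. to show that the fixed orbit $H\tilde p_{u_0}$ cannot be strictly smaller than $\overline{\tilde{\mathcal V}}(\tilde p_{u_0})$ off $D$. I expect this to be controlled by the constancy of $\Gamma^0$ on the dense set together with the torus structure of the $\overline{\mathcal V}$-leaves.
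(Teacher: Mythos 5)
Your handling of the first assertion, and of transitivity over the dense set $D$, is exactly the paper's argument: once $\Gamma^0_u$ is constant on $D$, the preceding Fact (via Proposition \ref{C^000}, whose group $C^{000}$ is manufactured from $\Gamma^0$ alone) identifies all the hulls $H(\Gamma^0,P^0_u)$, $u\in D$, with a single connected abelian group $H$, and that same Fact already delivers $H\tilde p_u=P^0_u\tilde p_u=\overline{\tilde{\mathcal V}}(\tilde p_u)$ for $u\in D$. So up to the extension off $D$ you coincide with the paper. For the extension itself the paper offers only the sentence ``this extends to all leaves,'' and you are right to isolate it as the delicate point: as the opening of the section warns, $u\mapsto\overline{\mathcal V}$ has no a priori (semi)continuity, so density alone does not spread transitivity to exceptional parameters.

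Where your sketch does not close is that the tools you reach for (continuity of $I_u$, $J_u$, and Carri\`ere's theorem) act on the leaf closures, which are precisely the objects that may jump. The mechanism that works acts instead on the $H$-orbits, which \emph{do} vary continuously once $H$ is $u$-independent: $\Gamma^0$ is a cocompact lattice in $H$, so if $F_H\subset H$ is a compact fundamental domain, the image in $M$ of $H\tilde p_u$ equals the image of $F_H\tilde p_u$, a compact set depending continuously on $u$ for the Hausdorff distance. Take $u_n\in D$ with $u_n\to u_0$; each image of $H\tilde p_{u_n}$ equals $\overline{\mathcal V}(p_{u_n})$, hence is compact and invariant under the flow $\phi^t$ of $V$, and a Hausdorff limit of compact $\phi^t$-invariant sets is $\phi^t$-invariant. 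Therefore the image of $H\tilde p_{u_0}$ is a closed $V$-invariant set containing $p_{u_0}$, hence contains $\overline{\mathcal V}(p_{u_0})$ --- exactly the inclusion you said you could not rule out failing. Note that this is the inclusion (rather than literal equality of the $H$-orbit with the leaf closure) that Section \ref{end of proof} actually uses, namely the $V$-invariance of $H\tau'$ and the fact that $V$ acts on each $H$-torus commuting with a transitive abelian action. With this ingredient added, your proof is complete and follows the paper's route.
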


\begin{proof} The previous fact implies  constancy   $H = H(\Gamma^0, P_u)$, for $u$ is a
	dense subset $D \subset [0, 1]$. 
	It then follows that 
	$H$ acts transitively  on a dense set of  $\overline { \tilde{\mathcal V}}$-leaves. This extends to all leaves.
\end{proof}

\begin{proof}[\textbf{End of the proof of  Theorem \ref{dynamics}}]
Consider the cover $M^\prime = \tilde{M} / \Gamma^0$. 
Then $H$ acts on it since it centralizes $\Gamma^0$. 
The $\F^\prime$-leaves of $M^\prime$  have the form $\Gamma^0 \setminus G / I$. As a class in $G$, a  point $\Gamma^0 x I$ has 
an $H$-orbit $H \Gamma^0 x I = H x I$. This  is  a torus $ \Gamma^0 \setminus H $ in $M^\prime$.  Taking $HZ$ (which is still abelian since $Z$ is contained in the center of $G$) instead of $H$ if necessary, we may assume that $Z \subset H$. 
Let $\tau$ be the image of the transversal 
curve $u \to p_u$  considered above,  and $\tau^\prime$ a lift in $M^\prime$.  The orbit $T^\prime= H \tau^\prime$
is topologically a product of a torus by an interval. It embeds to a  submanifold $T$ in $M$. It is Lorentzian 
and $V$-invariant, in fact $\overline{\mathcal V}$-invariant.  The $V$-action  on $T$ commutes with the 
$H$-one.  So, the $V$-action on each torus is conjugate to a linear one  and hence equicontinuous. 
With respect to Facts \ref{criteria1} and \ref{criteria2},  this $T$ can be thought of as the core $N$: equicontinuity 
of $V$ on a torus in $T $ implies equicontinuity on $T$, and  then equicontinuity on $M$ (since $T$ is timelike). \end{proof}

\begin{remark}
The case where the foliation $\F$ is not minimal, i.e. when the leaves are compact, is covered by the study we made so far. Indeed, in this case, the core $N$ contains a closed submanifold of the form $F \times [0,1]$, where $F$ is closed, $V$-invariant, and locally homogeneous, hence the results in Section \ref{Section 8} and Section \ref{end of proof} apply. 
\end{remark}

\begin{remark} Carri\`ere's Theorem says that $\mathcal V$ restricted to a $\overline {\mathcal V}$-leaf 
	is diffeomorphic to the foliation determined by a  minimal linear flow on the torus. In this general  context, the  
	transversally Riemannian foliation 
	$\mathcal V$ is given without  parametrization. All the investigation in last sections  aimed to 
	check that in our parametric setting, the vector field $V$  itself  is (smoothly) conjugate to a linear one. 
\end{remark}

\bibliographystyle{abbrv}
\bibliography{Bibliographie}

\vspace{1cm}

\noindent
{\small The first author was supported by the LABEX MILYON (ANR-10-LABX-0070) of Universit\'e de Lyon, within the program ``Investissements d'Avenir'' (ANR-11-IDEX-0007) operated by the French National Research Agency (ANR). And, partially, by the grants: PID2020-116126GB-I00\\
(MCIN/ AEI/10.13039/501100011033), and the framework IMAG/ Maria de Maeztu,\\ 
CEX2020-001105-MCIN/ AEI/ 10.13039/501100011033.}
\end{document}